\newtheorem{theorem}{Theorem}[section]
\newtheorem{lemma}[theorem]{Lemma}
\newtheorem{proposition}[theorem]{Proposition}
\theoremstyle{definition}
\theoremstyle{remark}
\newtheorem{remark}{Remark}
\numberwithin{equation}{section}
\numberwithin{remark}{section}
\newcommand{\cond}{\operatorname{cond}}
\newcommand{\B}{\mathcal{B}}
\newcommand{\C}{{\mathbb C}}
\newcommand{\R}{{\mathbb R}}
\newcommand{\Q}{{\mathbb Q}}
\newcommand{\Z}{{\mathbb Z}}
\newcommand{\eps}{\varepsilon}
\newcommand{\CC}{{\mathfrak C}}
\newcommand{\SL}{\operatorname{SL}}
\newcommand{\GL}{\operatorname{GL}}
\renewcommand{\a}{\mathfrak{a}}
\newcommand{\g}{\mathrm{g}}
\renewcommand{\d}{\mathrm d}
\newcommand{\sgn}{\operatorname{sgn}}
\renewcommand{\pmod}[1]{\,\,(\mathrm{mod}\,{#1})}
\begin{document}
\title{On primes represented by $aX^2+bY^3$}
\author{Jori Merikoski}
\address{Mathematical Institute,
University of Oxford,
Andrew Wiles Building,
Radcliffe Observatory Quarter,
Woodstock Road,
Oxford,
OX2 6GG}
\email{jori.merikoski@maths.ox.ac.uk}
\subjclass[2020]{}
\keywords{}
\begin{abstract}
Let $a,b>0$ be coprime integers. Assuming a conjecture on Hecke eigenvalues along binary cubic forms, we prove an asymptotic formula for the number of primes of the form $ax^2+by^3$ with $x \leq X^{1/2}$ and $y  \leq X^{1/3}$. The proof combines sieve methods with the theory of real quadratic fields/indefinite binary quadratic forms, the Weil bound for exponential sums, and spectral methods of GL(2) automorphic forms. We also discuss applications to elliptic curves.
\end{abstract}

\maketitle
\tableofcontents

\section{Introduction}

The problem of counting prime numbers along thin polynomial sequences has been solved only for a very narrow class of polynomials. Results fall into one of two lineages, one starting from the Friedlander-Iwaniec  primes of the form $X^2+Y^4$ \cite{FI} and one from the Heath-Brown primes of the form $X^3+2Y^3$ \cite{hb}. See \cite{fouvryi,FIalmost,greensawhney,hbli,lds,merikoskisparse,merikoskipolyprimes,pratt,stanleyFI} and \cite{hbmoroz1,hbmoroz2,li,maynard} for their respective descendants.  In particular, all known results about prime values of thin polynomials require that the polynomial factorizes in a number field gaining at least one linear variable, for example, $X^2+Y^4= (X+iY^2)(X-iY^2)$ in $\Q(i)$ and $X^3+2Y^3$ is the norm of $X+Y\sqrt[3]{2}$ in $\Q(\sqrt[3]{2})$.

We introduce a new approach for counting the prime values of the polynomial $aX^2+bY^3$ that has no obvious factorization. In the absence of a linear variable, we will leverage the quadratic variable. This is a much harder task and our main result is conditional on a hypothesis that we formulate now.

Let  $d >0$ be a fundamental discriminant and consider the real quadratic field $\Q(\sqrt{d})$ (see Section \ref{sec:binaryquadr} for a more detailed discussion). We let $\lambda_{\chi \xi^\ell}(n) = \sum_{N_{\Q(\sqrt{d})} (\a) = n} \chi \xi^\ell (\a) $ denote the Hecke eigenvalue associated to the Gr\"o\ss encharakter $\chi \xi^\ell$,  parametrized by  $\ell \in \Z$ and class group characters $\chi \in \widehat{\CC(d)}$ \cite{hecke1,hecke2}. For $\chi \xi^{\ell}=1$ it is given by the Dirichlet convolution $\lambda_{1} = 1\ast (\tfrac{d}{\cdot})$ where  $(\tfrac{d}{n})$ is the Jacobi symbol. For $T> 1$ we define the truncated approximation $\lambda_{1}^\sharp(n,T) = 1\ast( (\tfrac{d}{\cdot})\mathbf{1}_{[0,T]})(n)$  and denote the error term by $\lambda_{1}^\flat(n,T) =\lambda_{1}(n)-  \lambda_{1}^\sharp(n,T)$. Having no reason to suspect otherwise, for $\chi \xi^{\ell} \neq 1$ we expect square-root cancellation along the values of binary cubic forms, and for $a \in \Z_{>0}$ and $\eps > 0$ we make the following hypothesis. \\

\noindent\textbf{Conjecture $\mathrm{C}_a(\eps)$}.
Let  $C(X,Y) =c_1X^3 - c_2Y^3 \in \Z[X,Y]$  for some $c_1 ,c_2 > 0$. Then for $\chi \xi^{\ell} \neq 1$ and $B_1 ,B_2 >1$ we have
\begin{align*}
\sum_{\substack{y_1 \leq B_1, \, y_2 \leq B_2 \\ C(y_1,y_2) \equiv 0 \pmod{a}}} \lambda_{\chi \xi^{\ell} }(|\tfrac{1}{a}C(y_1,y_2)|) &\ll_\eps \max\{B_1^2,B_2^2,c_1^2,c_2^2,d,|\ell|^2\}^\eps (B_1+B_2), 
\end{align*}
and for some $\eta > 0$ we have for any $T \in [(B_1+B_2)^{1-\eta},B_1+B_2]$
\begin{align*}
  \sum_{\substack{y_1 \leq B_1, \, y_2 \leq B_2 \\ C(y_1,y_2) \equiv 0 \pmod{a}}} \lambda^\flat_{1}(|\tfrac{1}{a}C(y_1,y_2)|,T) & \ll \max\{c_1^2,c_2^2,d,|\ell|^2\}^\eta (B_1^2+B_2^2)^{1-\eta}.
\end{align*}

For $B_1 \approx B_2$ taking $\eps =o(1)$ corresponds to square-root cancellation, and $\eps=1/2$ corresponds to no cancellation at all. For $\chi \xi^{\ell}=1$ we require only a small power-saving. This conjecture serves as a placeholder, we only require such a bound on average over a large family of eigenvalues, see Conjecture $\mathrm{L}_{a,b}(\eps)$ in Remark \ref{remark:largesieve}. Assuming that this conjecture holds for all $\eps > 0$, we can show an asymptotic formula for the number of primes of the form $aX^2+bY^3$. As usual, $\Lambda(n)$ denotes the von Mangoldt function.
\begin{theorem} \label{thm:asymp}
    Let $a,b > 0$ be coprime integers. Assume that Conjecture $\mathrm{C}_a(\eps)$ holds for all $\eps >0$.  Then 
    \begin{align*}
        \sum_{x \leq X^{1/2}} \sum_{y \leq X^{1/3}} \Lambda(ax^2+by^3) = (1+o(1)) X^{5/6}.
    \end{align*}
\end{theorem}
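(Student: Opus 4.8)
The plan is to treat the left‑hand side as a sieve problem for the thin two‑dimensional sequence $\mathcal A=(\mathcal A(n))_{n}$, $\mathcal A(n)=\#\{(x,y)\in\mathcal B:\ ax^2+by^3=n\}$, where $\mathcal B=\{(x,y):x\le X^{1/2},\ y\le X^{1/3}\}$; it is supported in $[1,(a+b)X]$ and $\sum_n\mathcal A(n)=|\mathcal B|=X^{5/6}+o(X^{5/6})$. After replacing the sharp cutoffs defining $\mathcal B$ by smooth weights at negligible cost and discarding the prime‑power contribution of $\Lambda$ (which is $O(X^{1/2+o(1)})$), I would apply a combinatorial decomposition of the von Mangoldt function — Heath--Brown's identity of bounded degree, for the most flexible ranges — reducing matters to a bounded number of \emph{Type I} and \emph{Type II} sums
\[
\Sigma_I=\sum_{d\le D}\gamma_d\,A_d,\quad A_d=\#\{(x,y)\in\mathcal B:\ d\mid ax^2+by^3\};\qquad \Sigma_{II}=\sum_{m\sim M}\sum_{k\sim K}\alpha_m\beta_k\,\#\{(x,y)\in\mathcal B:\ ax^2+by^3=mk\},
\]
with $MK\asymp X$, $D$ as large as the method allows, and $M$ confined to an intermediate range dictated by the lengths $X^{1/2}$ and $X^{1/3}$. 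The expected main term is produced by the main terms of the Type I sums (the $d=1$ term and its companions under the identity) and equals $\mathfrak S\,X^{5/6}(1+o(1))$, with singular series
\[
\mathfrak S=\prod_p\Big(1-\tfrac1p\Big)^{-1}\Big(1-\frac{N_p}{p^2}\Big),\qquad N_p=\#\{(x,y)\bmod p:\ p\mid ax^2+by^3\}.
\]
A short computation gives $N_p=p$ for \emph{every} prime $p$: for $p\nmid 6ab$ it follows from $\sum_{y\bmod p}\big(\tfrac{cy^3}{p}\big)=0$ (as $\big(\tfrac{y^3}{p}\big)=\big(\tfrac yp\big)$), and for $p\mid 6ab$ it is immediate from $\gcd(a,b)=1$; the local factors at prime powers are likewise $1$. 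Hence $\mathfrak S=1$, which is why the target constant is exactly $1$, and the whole problem is to show $\Sigma_I$ has no unexpected secondary main term up to a large level $D$ and that $\Sigma_{II}=o(X^{5/6})$.

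For the Type I sums, for each $d\le D$ and each class of $y\bmod d$ the inner sum counts $x\le X^{1/2}$ with $ax^2\equiv-by^3\pmod d$, i.e.\ with $x$ in $O(d^{o(1)})$ progressions to modulus $\asymp d$. Poisson summation in $x$ extracts the main term $(N_d/d^2)|\mathcal B|$ and leaves incomplete Gauss/Sali\'e sums $\sum_{x\bmod d}e\big((\alpha x^2+\beta x)/d\big)$; a further Poisson step in $y$ produces general complete exponential sums modulo $d$. All of these obey the Weil bound, and summing the resulting errors over $d$ is acceptable for $D$ up to roughly $X^{1/3}$, and somewhat beyond once cancellation in the $y$‑sum is also used. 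This Bombieri--Vinogradov‑type input is the more routine half.

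The heart of the matter is $\Sigma_{II}$, and this is where the quadratic variable is leveraged via the theory of real quadratic fields. Fixing $m$ and $k$, the equation $ax^2+by^3=mk$ says precisely that $a(mk-by^3)=(ax)^2$ is a perfect square; writing $y=y_0r^2$ with $y_0$ squarefree pins down the field $\Q(\sqrt d)$ attached to the data, and the condition becomes a statement about representations by indefinite binary quadratic forms of discriminant $d$, equivalently about integral ideals of a prescribed norm in $\Q(\sqrt d)$ lying in a fundamental domain for the unit group. Harmonic analysis on that fundamental domain — a circle — expands the sharp constraints in terms of the Hecke Gr\"o\ss encharakter characters $\chi\xi^\ell$, $\chi\in\widehat{\CC(d)}$, $\ell\in\Z$, so that $\Sigma_{II}$ becomes a sum over a family of discriminants $d$ and over $(\chi,\ell)$ of bilinear forms in the eigenvalues $\lambda_{\chi\xi^\ell}$. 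One then applies Cauchy--Schwarz to release the unknown coefficients $\alpha_m$, opens the square, bounds the diagonal trivially, and in the off‑diagonal eliminates $m$, arriving at sums of the shape
\[
\sum_{\substack{y_1\le B_1,\ y_2\le B_2\\ a\mid C(y_1,y_2)}}\lambda_{\chi\xi^\ell}\big(|\tfrac1a C(y_1,y_2)|\big),\qquad C(Y_1,Y_2)=c_1Y_1^3-c_2Y_2^3,
\]
a binary cubic form built from $b$ and the surviving parameters. The contribution of $\chi\xi^\ell\neq1$, on average over the family, is precisely what Conjecture $\mathrm C_a(\eps)$ — through its averaged form Conjecture $\mathrm L_{a,b}(\eps)$, valid for all $\eps>0$ — provides, and this is the only use of the hypothesis. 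The complementary $\chi\xi^\ell=1$ part, where $\lambda_1=1\ast(\tfrac d\cdot)$ is a divisor‑type function, is handled unconditionally: split $\lambda_1=\lambda_1^\sharp(\cdot,T)+\lambda_1^\flat(\cdot,T)$, treat $\lambda_1^\sharp$ by the hyperbola method together with Weil bounds for the arising exponential sums, and bound the tail $\lambda_1^\flat$ by the mild power saving that is also part of Conjecture $\mathrm C_a$; the errors from completing the various sums modulo $m,k,d$ produce sums of Kloosterman sums, and the requisite cancellation is extracted from the Kuznetsov formula and the spectral large sieve for $\GL(2)$.

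Assembling everything, the Type I main term contributes $\mathfrak S X^{5/6}=X^{5/6}$ while the Type I errors and all Type II sums are $o(X^{5/6})$, and since Heath--Brown's identity reproduces $\Lambda$ exactly on the relevant range this gives the asymptotic. I expect the main obstacle to be the Type II step in full: the structural reduction of the bilinear count to real‑quadratic‑field data so that the off‑diagonal genuinely becomes a sum of $\lambda_{\chi\xi^\ell}$ along binary cubic forms with ranges of moduli the method can reach, together with the unconditional treatment of the $\chi\xi^\ell=1$ contribution and of the Kloosterman‑sum errors via Weil and Kuznetsov, all kept uniform over the family of fundamental discriminants $d$.
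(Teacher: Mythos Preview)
Your outline captures the broad architecture but has a genuine gap that would prevent you from reaching the asymptotic, and a structural misidentification in the Type~II step.

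\textbf{Missing Type~I$_2$ information.} With only Type~I (up to $X^{5/9}$) and Type~II (in $(X^{1/6},X^{1/3-2\eps/3})$), Heath--Brown's identity leaves uncovered terms: in the $j=3$ part $\mu_{\le X^{1/3}}^{\ast 3}\ast 1^{\ast 2}$, when the three $\mu$-factors together have size $\le X^{1/6}$, no subproduct lands in the Type~II window, and what remains is a \emph{product} $m_1m_2$ of two long smooth variables rather than a single one. This is a divisor sum $\sum_{d}\alpha_d\sum_{m_1,m_2}a_{dm_1m_2}$, i.e.\ a Type~I$_2$ sum, and neither of your two inputs handles it. The paper requires a separate Type~I$_2$ estimate up to level $X^{1/4-\eta}$ (Proposition~\ref{prop:typeI2}), proved via the spectral theory of $\GL(2)$ automorphic forms; any level strictly above $X^{1/6}$ would suffice, and the sieve has a genuine discontinuity at $1/6$. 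Without this input the Type~I/II ranges yield only a lower bound of the correct order (this is Theorem~\ref{thm:lower}), not the asymptotic. Your invocation of Kuznetsov is in the right spirit but misplaced: in the paper the spectral input feeds the Type~I$_2$ estimate, not the Type~II error terms.

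\textbf{Where the real quadratic field comes from.} In the Type~II step the field is \emph{not} determined by writing $y=y_0r^2$; that extraction plays no role. The mechanism is: apply Cauchy--Schwarz in $m$, obtain the off-diagonal system $mn_1=ax_1^2+by_1^3$, $mn_2=ax_2^2+by_2^3$, eliminate $m$ to reach
\[
a(n_2x_1^2-n_1x_2^2)=b(n_1y_2^3-n_2y_1^3),
\]
and observe that the left side is represented by the indefinite form $n_2X^2-n_1Y^2$ of discriminant $d=4n_1n_2$. The Hecke expansion (Lemma~\ref{le:binaryrep}) is applied to this restricted representation function, so the family of fields is indexed by the off-diagonal pair $(n_1,n_2)$, not by $y$. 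This also explains why the binary cubic on the right has coefficients $bn_1,bn_2$ entangled with $(n_1,n_2)$, which is precisely why a large-sieve shortcut is unavailable and the conjecture is genuinely needed. The $\chi\xi^\ell=1$ main term is then evaluated using Poisson summation together with Heath--Brown's large sieve for quadratic characters and the Baier--Young large sieve for sextic characters; no Kloosterman or Kuznetsov input enters the Type~II analysis.
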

For a fixed $\eps > 0$ the error term $o(1)$ is replaced by $O(\eps)$ (see Theorem \ref{thm:technicalasymp}). We get a correct order lower bound for primes with the fixed value $\eps=1/17$ and a lower bound for products of exactly $k \geq 2$ primes for any $\eps < 1/4$. 
\begin{theorem} \label{thm:lower}
    Let $a,b > 0$ be coprime integers. Assuming that Conjecture $\mathrm{C}_a(1/17)$ holds, we have
    \begin{align*}
        \sum_{x \leq X^{1/2}} \sum_{y \leq X^{1/3}} \Lambda(ax^2+by^3) \geq (0.05+o(1)) X^{5/6}.
    \end{align*}
    Assuming that Conjecture $\mathrm{C}_a(\eps)$ holds for some $\eps <1/4$, we have for any $k \geq 2$
      \begin{align*}
        \sum_{x \leq X^{1/2}} \sum_{y \leq X^{1/3}} (\underbrace{\Lambda \ast \cdots \ast \Lambda}_{k \, \mathrm{times}}) (ax^2+by^3) \gg X^{5/6} (\log X)^{k-1}.
    \end{align*}
\end{theorem}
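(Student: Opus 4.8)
The plan is to run, at a \emph{fixed} value of $\eps$, the sieve argument behind Theorem \ref{thm:asymp}; let me recall its shape. Reducing to the sequence $\mathcal A=\{ax^2+by^3:\ x\le X^{1/2},\ y\le X^{1/3}\}$, with $\mathbf 1_{\mathcal A}(n)=\#\{(x,y):ax^2+by^3=n\}$, and applying a combinatorial identity for $\Lambda$ of Heath--Brown type, $\sum_n\Lambda(n)\mathbf 1_{\mathcal A}(n)$ becomes a bounded combination of Type~I sums $\sum_{d\le D}\gamma_d\sum_m\mathbf 1_{\mathcal A}(dm)$ and Type~II sums $\sum_{m\sim M}\sum_{n\sim N}\alpha_m\beta_n\mathbf 1_{\mathcal A}(mn)$. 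The Type~I sums, and the Type~II sums whose scales $(M,N)$ lie in a favourable region $\mathcal R(\eps)$, are evaluated through the real-quadratic-field / indefinite-binary-quadratic-form apparatus (Weil bound, $\GL(2)$ spectral theory), with Conjecture $\mathrm C_a(\eps)$ — in its averaged form $\mathrm L_{a,b}(\eps)$ of Remark \ref{remark:largesieve} — supplying the cancellation needed in the sums of Hecke eigenvalues along cubic forms; up to an admissible error these agree with the corresponding sums over a tractable model sequence $\mathcal B$, whose contributions reproduce the main term $X^{5/6}$. The region $\mathcal R(\eps)$ shrinks as $\eps$ grows (smaller $\eps$ means more cancellation, hence longer admissible ranges) and exhausts everything as $\eps\to0$, which is why Theorem \ref{thm:asymp} needs $\mathrm C_a(\eps)$ for all $\eps$ and Theorem \ref{thm:technicalasymp} degrades to an $O(\eps)$ error for fixed $\eps$.

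\textbf{Part 1.} For $\eps=1/17$ the region $\mathcal R(1/17)$ is too small to evaluate every piece, so in place of an identity one runs a lower-bound sieve (Harman's method): iterating Buchstab's identity a bounded number of times — with a role reversal of the two multiplicative variables where it helps — one bounds the $\Lambda$-weight below by a combination of \emph{retained} pieces, each of which, after a dyadic decomposition of its coefficients, reduces to Type~I or Type~II sums with scales in $\mathcal R(1/17)$, minus \emph{discarded} non-negative sums over $\mathcal A$, each bounded above by the corresponding sum over $\mathcal B$ through an upper-bound (linear) sieve. Transferring the retained pieces to $\mathcal B$, the main term becomes $\kappa(1/17)\,X^{5/6}$, where $\kappa(\eps)$ is an explicit finite sum of integrals of products of Buchstab-type functions over the complement of $\mathcal R(\eps)$, with $\kappa(\eps)\to1$ as $\eps\to0$. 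A numerical optimisation of the Buchstab decomposition gives $\kappa(1/17)\ge0.05$, which yields the first assertion with the $o(1)$ absorbing the sieve errors.

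\textbf{Part 2.} For $\Lambda^{\ast k}$ with $k\ge2$ I use that $\Lambda^{\ast k}$ is non-negative, supported on $n$ with $\Omega(n)\le k$, and equal to $k!\,\log p_1\cdots\log p_k$ on a product of $k$ distinct primes. Fixing a small $\delta>0$ and restricting $n$ to the shape $n=\ell m$ with $\ell=p_1\cdots p_{k-2}$ a product of $k-2$ distinct primes $\le X^{\delta}$ and $m$ squarefree with $\Omega(m)=2$ and all prime factors $>X^{\delta}$, one obtains
\begin{align*}
\sum_{n}\Lambda^{\ast k}(n)\,\mathbf 1_{\mathcal A}(n)\ \gg_k\ &\sum_{p_1<\cdots<p_{k-2}\le X^{\delta}}(\log p_1)\cdots(\log p_{k-2})\\
&\times\sum_{\substack{m\ \mathrm{squarefree},\ \Omega(m)=2\\ p\mid m\,\Rightarrow\, p>X^{\delta}}}(\Lambda\ast\Lambda)(m)\,\mathbf 1_{\mathcal A}(\ell m),\qquad \ell:=p_1\cdots p_{k-2}.
\end{align*}
The inner sum is a lower bound for the ($\log$-weighted) number of $n\in\mathcal A$ with $\ell\mid n$ whose cofactor $n/\ell$ is a $P_2$ free of primes below $X^{\delta}$; detecting such $P_2$'s is a two-dimensional sieve problem rather than a prime-detection one, so one only needs the corresponding sieve inequality to close, which by the machinery of Part 1 it does under $\mathrm C_a(\eps)$ for any $\eps<1/4$, uniformly over $\ell\le X^{(k-2)\delta}$, giving $\gg X^{5/6}(\log X)/\ell$. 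Summing over $p_1<\cdots<p_{k-2}\le X^{\delta}$ via $\sum_{p\le X^{\delta}}(\log p)/p\sim\delta\log X$ produces the claimed $\gg_k X^{5/6}(\log X)^{k-1}$. (For $k=1$ this route collapses to genuine prime detection, which is why only $k\ge2$ is asserted.)

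\textbf{Main obstacle.} The analytic core — the evaluation of the Type~I sums and of the Type~II sums in $\mathcal R(\eps)$, and the conversion of Conjecture $\mathrm C_a(\eps)$ into those estimates — is inherited from Theorem \ref{thm:asymp} and may be quoted. What is genuinely new here is combinatorial: cataloguing exactly which dyadic Type~II ranges lie in $\mathcal R(\eps)$ at $\eps=1/17$ and as $\eps\uparrow1/4$, designing the Buchstab decomposition so that every discarded term is non-negative and every retained term accessible, and carrying out the numerical optimisation that certifies $\kappa(1/17)\ge0.05$ and the positivity of the relevant sieve constant for all $\eps<1/4$. This bookkeeping — in the spirit of Harman's prime-detecting sieves and of the Heath--Brown--Moroz and Friedlander--Iwaniec treatments — is where essentially all the difficulty resides.
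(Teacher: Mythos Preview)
Your Part 1 is essentially the paper's approach: the paper reuses the exact Buchstab decomposition from the proof of Theorem \ref{thm:technicalasymp} (two applications, producing $S_1,\dots,S_6$), notes that the only terms not handled by the arithmetic input at a fixed $\eps$ are the positive-sign pieces $S_4(\mathcal W)$ and $S_6(\mathcal W)$ in their bad ranges, and bounds those by the Buchstab integrals $\mathfrak D_4(\eps)+\mathfrak D_6(\eps)<0.95$ at $\eps=1/17$. So no further iteration or optimisation is performed; the constant $0.05$ falls out of the decomposition already in place.

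Your Part 2 is correct in spirit but substantially more elaborate than necessary. The paper's argument is a single line: since $\Lambda^{\ast k}\ge 0$, restrict the $k$-fold convolution to configurations where one factor lies in $[X^{1/6+\eta},X^{1/6+2\eta}]$ and apply Proposition \ref{prop:typeII} directly (gluing the other $k-1$ factors into $\alpha_m$). The threshold $\eps<1/4$ is exactly the condition that the Type II window $(X^{1/6+\eta},X^{1/3-2\eps/3-\eta})$ be nonempty for some $\eta>0$. No sieve is invoked at all for the second assertion --- in particular, no $P_2$ detection, no splitting off $k-2$ small prime factors, and no appeal to ``the machinery of Part 1''. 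Your route would work, but it reduces (for the inner $P_2$ count) to the same Type II application anyway, so the detour through a two-dimensional sieve and the summation over $\ell$ is avoidable.
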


The proof of Theorem \ref{thm:asymp} may be adapted  to show that, under the same hypothesis, the Möbius function $\mu(n)$ has cancellation.
\begin{theorem} \label{thm:mobius}
       Let $a,b > 0$ be coprime integers. Assume that Conjecture $\mathrm{C}_a(\eps)$ holds for all $\eps >0$.  Then 
        \begin{align*}
        \sum_{x \leq X^{1/2}} \sum_{y \leq X^{1/3}} \mu(ax^2+by^3) = o(X^{5/6}).
    \end{align*}
\end{theorem}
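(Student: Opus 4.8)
The plan is to deduce Theorem~\ref{thm:mobius} by running the proof of Theorem~\ref{thm:asymp} essentially verbatim, replacing the role of $\Lambda$ throughout by $\mu$. The key structural point is that the argument for Theorem~\ref{thm:asymp} does not really use the multiplicative identity defining $\Lambda$ beyond what a combinatorial decomposition provides: it uses a combinatorial identity (Heath-Brown or Vaughan type) that writes $\Lambda$ restricted to $[1,N]$ as a bounded number of Dirichlet convolutions of the shape $\mu_{\leq}\ast\cdots\ast\1_{\leq}\ast\cdots$, with each factor supported on a dyadic range, and then estimates the resulting bilinear (Type~I) and multilinear (Type~II) sums along the values $ax^2+by^3$. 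Since $\mu$ satisfies the even simpler identity $\mu = \mu \ast \1 \ast \mu$ (or one may use $\mu\cdot\1 = \delta$ together with Heath-Brown's identity for $\mu$), it admits a decomposition of exactly the same form, with the same dyadic structure and the same set of admissible ranges for the smooth/rough variables. Consequently every Type~I and Type~II estimate proved en route to Theorem~\ref{thm:asymp} applies with no change, and the main terms — which for $\Lambda$ combine to $(1+o(1))X^{5/6}$ — now combine, for $\mu$, to a cancelling sum: the main term of a Type~I sum for $\mu$ is $\sum_{d} \mu(d)(\cdots)$, which is $o$ of the trivial bound by the prime number theorem (or by Möbius cancellation in the relevant smooth range), so the total main term is $o(X^{5/6})$ rather than $\sim X^{5/6}$.

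Concretely, I would proceed as follows. First, fix $\eps>0$ and invoke the technical version (Theorem~\ref{thm:technicalasymp}) that underlies Theorem~\ref{thm:asymp}, isolating from its proof the two inputs: (i) a list of bilinear/multilinear sum estimates of the form $\sum_{mn\leq X,\, \text{conditions}} \alpha_m \beta_n \,\mathbf{1}[mn = ax^2+by^3] = (\text{main term}) + O(\eps X^{5/6}) + (\text{error})$, valid for coefficient sequences $\alpha,\beta$ in prescribed dyadic ranges, where the error is controlled via the real-quadratic-field machinery, the Weil bound, and the $\mathrm{GL}(2)$ spectral input packaged in Conjecture~$\mathrm{C}_a(\eps)$; and (ii) the combinatorial identity reducing $\sum_{x,y}\Lambda(ax^2+by^3)$ to a bounded linear combination of such sums. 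Second, replace (ii) by the analogous identity for $\mu$: using Heath-Brown's identity $\mu(n) = \sum_{j=1}^{J} (-1)^{j}\binom{J}{j} \sum_{\substack{m_1\cdots m_j n_1\cdots n_j = n\\ m_i \leq z}} \mu(m_1)\cdots\mu(m_j)$ for $n\leq z^{2J}$, which produces Dirichlet convolutions of exactly the same analytic shape (each $\mu(m_i)$ bounded by $1$, each $n_i$-variable unrestricted, and the same dyadic splitting), so that each resulting piece matches one of the sums estimated in (i) after possibly a harmless rearrangement. Third, observe that when $\alpha$ or $\beta$ is the constant sequence $1$ on a dyadic interval (the "Type I" case), the corresponding main term carries a factor $\sum_{d\sim D}\mu(d)$ or more precisely $\sum_{d\sim D}\mu(d)\,g(d)$ for a smooth weight $g$, which is $o(D)$ uniformly by Möbius cancellation in arithmetic progressions / the Siegel--Walfisz theorem, whereas for $\Lambda$ it was the prime-counting main term. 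Summing the (bounded number of) pieces, the main terms for $\mu$ contribute $o(X^{5/6})$ and the error terms are $O(\eps X^{5/6})$ exactly as before; letting $\eps\to 0$ gives the claim.

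The one genuine check, and the place I would be most careful, is that the Type~II (genuinely bilinear, both $\alpha$ and $\beta$ non-smooth) estimates used for $\Lambda$ are \emph{coefficient-agnostic}: they should bound $|\sum \alpha_m\beta_n\,\mathbf{1}[\cdots]|$ in terms of $\|\alpha\|_2\|\beta\|_2$ (or $\ell^1$/$\ell^\infty$ norms) times a saving, with no main term and no use of the specific arithmetic of $\Lambda$. This is standard for Vaughan/Heath-Brown-type treatments — the spectral and Weil-bound inputs only ever see the values $ax^2+by^3$ and generic divisor-bounded coefficients — so the substitution is legitimate; I would simply verify that the ranges of the dyadic variables produced by Heath-Brown's identity for $\mu$ (with the parameter $J$ chosen as in the $\Lambda$ argument, e.g. $J=3$, and $z$ a small power of $X$) all fall within the admissible Type~I/Type~II regions established for Theorem~\ref{thm:asymp}. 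Since those regions were already shown to cover the decomposition of $\Lambda$, and Heath-Brown's identities for $\Lambda$ and for $\mu$ have the same combinatorial footprint, this is immediate. The main obstacle, to the extent there is one, is purely bookkeeping: tracking which pieces retain a main term (only those with a long smooth variable) and confirming each such main term is $o$ of trivial, rather than the asymptotic-size contribution it was for $\Lambda$; there is no new analytic difficulty and Conjecture~$\mathrm{C}_a(\eps)$ is used exactly as in Theorem~\ref{thm:asymp}.
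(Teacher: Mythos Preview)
Your high-level strategy is right and matches the paper's: apply Heath-Brown's identity for $\mu$ and feed the pieces into the same Type~I, Type~II, and Type~I$_2$ estimates. However, there is a genuine gap. The Type~II estimate (Proposition~\ref{prop:typeII}) is \emph{not} coefficient-agnostic in the sense you claim: it requires $\beta_n$ to be supported on square-free integers with $\gcd(mn,P(W))=1$. This roughness is essential in the proof---it is what forces the pseudo-diagonal $n_0=\gcd(n_1,n_2)>1$ to satisfy $n_0\geq W$, making that contribution negligible (Section~\ref{sec:n0contribution}). Heath-Brown's identity applied directly to $\mu(n)$ produces variables with no such coprimality restriction, so you cannot invoke Proposition~\ref{prop:typeII} as stated. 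Relatedly, the paper's proof for $\Lambda$ uses Buchstab iteration, not Heath-Brown, and Buchstab automatically produces rough variables; your claim that the $\mu$-ranges are covered ``since those regions were already shown to cover the decomposition of $\Lambda$'' via the same identity is therefore a non sequitur.

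The paper's actual fix is to first split $n=n_s n_r$ into an $X^{\nu^2}$-smooth part $n_s$ and a rough part $n_r$, and to apply Heath-Brown only to $\mu(n_r)$. The extreme ranges $n_s\leq X^{\nu^3}$ and $n_s>X^{\nu}$ are handled trivially (sieve upper bound, smooth-number bounds). In the middle range all Heath-Brown variables are now coprime to $P(X^{\nu^2})\supseteq P(W)$, so Proposition~\ref{prop:typeII} applies when a variable sits in the Type~II window. The residual case where all the $\mu$-weighted variables $k_i$ are too short for Type~II is handled by the fundamental lemma on the long variable together with M\"obius cancellation in $\mu(n_s)$ via the prime number theorem---this is the second place where the smooth/rough split is doing real work, not bookkeeping. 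The bad strips near $X^{1/6}$ and $X^{1/3}$ contribute $O(\eps\cdot c(\nu))$, and one finishes by choosing $\eps=\nu^3 2^{-4/\nu^2}$. A minor correction: your Heath-Brown identity for $\mu$ has one smooth variable too many; it is $(\mu\mathbf{1}_{\leq z})^{\ast j}\ast 1^{\ast(j-1)}$, not $\ast 1^{\ast j}$.
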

Unconditionally,  for the Liouville function $\lambda$, Teräväinen \cite[Theorem 2.12]{ter} has shown that $ \lambda(ax^2+by^3)$ takes both values $\pm 1$ infinitely often. 

\subsection{Applications to elliptic curves}
A major motivation for studying prime numbers of this form comes from elliptic curves
\begin{align*}
    E_{A,B}: \quad y^2 = x^3+Ax+B
\end{align*}
whose discriminant 
\begin{align*}
    \Delta(E_{A,B}) = -16(4 A^3 + 27 B^2) 
\end{align*}
controls the places of bad reduction. Therefore, as a special case of Theorem \ref{thm:asymp} with $a=27,b=4$, assuming Conjecture $\mathrm{C}_{27}(\eps)$ for all $\eps > 0$, we get an asymptotic formula for the number of elliptic curves with exactly one place of bad reduction $p > 2$.

It is not clear if the argument can be adapted to study the distribution of the root numbers of the elliptic curves $E_{A,B}$ (see \cite[proof of Proposition 3.1] {youngelliptic} for $4A^3+27B^2$ square-free) 
\begin{align*}
  w(E_{A,B}) = - \mu(4A^3+27B^2)(\tfrac{6 B}{4 A^3+27 B^2}) w_2(E_{A,B}).
\end{align*}
The twist by the Jacobi symbol appears surprisingly difficult to accommodate in our approach. Helfgott \cite[Corollary 5.2]{helfgott} has shown that for the family $y^2=x(x+A)(x+B)$ the root numbers are evenly distributed, by considering essentially $(\tfrac{B}{A}) \mu(A B(A-B))$, but for his arguments the Jacobi symbol causes only minor issues.

It should however be possible to consider the root numbers of the quadratic twist family
\begin{align*}
    E^{(B)}_{A,B} : \quad By^2 = x^3 + Ax + B
\end{align*}
for $B$ and $4 A^3+27 B^2$ square-free with $\gcd(B,\Delta(E_{A,B}))=1$, since their root numbers satisfy 
\begin{align*}
     w(E^{(B)}_{A,B}) = w(E_{A,B})(\tfrac{-B}{4 A^3+27 B^2}) =  - \mu(4A^3+27B^2)(\tfrac{-6}{4 A^3+27 B^2}) w_2(E_{A,B}).
\end{align*}
The local root number $w_2(E_{A,B})$ may be controlled using \cite[Table III]{rizzo} after sorting $A$ and $B$ into residue classes.

\subsection{Overview}
We restrict to the case $a=b=1$  in this non-rigorous sketch. Replacing the rough cut-offs by smooth weights $f$, we consider counting primes weighted by the sequence
\begin{align*}
    a_n = \sum_{n=x^2+y^3} f(\tfrac{x}{A})  f(\tfrac{y}{B}), \quad A\asymp X^{1/2}, \, B \asymp X^{1/3}.
\end{align*}
By a sieve argument (essentially Heath-Brown's identity \cite{hbidentity}), the proof reduces to the asymptotic evaluation of three different types of sums
    \begin{align*}
        &\text{Linear sums (Type I)}  &&\sum_{d \leq D_1} \alpha_d \sum_{n} a_{dn},\\
           &\text{Bilinear sums (Type II)}   && \hspace{-8pt}\sum_{m \sim X/N}\alpha_m  \sum_{n} \beta_n a_{mn},\\
            &\text{Divisor sums (Type I$_2$)} &&\sum_{d \leq D_2} \alpha_d \sum_{m,n} a_{d m n},
\end{align*}
where $\alpha$ and $\beta$ denote bounded complex coefficients. 
\subsubsection{Type I sums}
We are able to handle the Type I sums for
\begin{align*}
 D_1< X^{5/9}   
\end{align*}
by applying Poisson summation to the variables $x$ and $y$ and the Weil bound for exponential sums.
\subsubsection{Type II sums}
Assuming conjecture $\mathrm{C}_1(\eps)$ for some $\eps>0$, we are able to handle Type II sums in the range 
\begin{align*}
    X^{1/6}  < N  < X^{1/3-2\eps/3}.
\end{align*}
This estimate for type II sums is based on the following considerations that can be seen as the main idea of this paper. After applying the Cauchy-Schwarz inequality on $m$ and rearranging sums, the task reduces to evaluating a sum of the form
\begin{align*}
    \sum_{n_1,n_2 \sim N} \beta_{n_1} \beta_{n_2} \sum_{k \asymp NX} \Upsilon_{n_1,n_2}(k) \mathcal{Q}_{n_1,n_2}(k),
\end{align*}
where $\Upsilon_{n_1,n_2}(k),\mathcal{Q}_{n_1,n_2}(k)$ denote the  restricted representations by binary cubic and quadratic forms
\begin{align*}
    \Upsilon_{n_1,n_2}(k) = \sum_{k=n_1 y_2^3-n_2y_1^3} f(\tfrac{y_1}{B}) f(\tfrac{y_2}{B}) \quad \text{and} \quad 
       \mathcal{Q}_{n_1,n_2}(k) = \sum_{k=n_2 x_1^2-n_1x_2^2} f(\tfrac{x_1}{A}) f(\tfrac{x_2}{A}).
\end{align*}
Using the classical correspondence between the class group $\CC(d)$ of $\Q(\sqrt{d})$ and the set of $\GL_2(\Z)$-equivalence classes of binary quadratic forms of discriminant $d,$ the latter may be expanded (see Lemma \ref{le:binaryrep}) using the Hecke characters for the real quadratic field of discriminant $d=4 n_1 n_2 \asymp N^2$. We essentially get
\begin{align*}
    \mathcal{Q}_{n_1,n_2}(k) \approx \frac{1}{h(d) R_d}\sum_{|\ell| \ll R_d} \sum_{\chi \in \widehat{\CC(d)}} \chi \xi^\ell(\mathfrak{n}_2) \lambda_{\chi \xi^{\ell} } (k), \quad \mathfrak{n}_2^2 = (4 n_2) \subseteq O_d.
\end{align*}
Here the class number $h(d)$ and the regulator $R_d$ satisfy the Dirichlet class number formula \eqref{eq:classnumberformula}
\begin{align*}
    2h(d)R_d =\sqrt{d} L(1,(\tfrac{d}{\cdot})) \approx \sqrt{d} \asymp N,
\end{align*}
where  $\approx$ holds on average over $n_1,n_2$. By the Cohen-Lenstra heuristics, we expect that typically $h(d) \ll 1$ and $R_d \asymp N$, so that the characters $\xi^\ell$ are the main culprit for the losses. Corresponding to the fact that a minimal solution to Pell's equations may be exponentially large, we expect that often $\mathcal{Q}_{n_1,n_2}(k) = 0.$ More precisely, we have an expansion of the function $\mathcal{Q}_{n_1,n_2}(k)$ which has expected density $\approx 1/N$ into roughly $N$ many harmonics $\lambda_{\chi \xi^{\ell} } (k)$. 

Invoking Conjecture $\mathrm{C}_1(\eps)$, we obtain for $\chi \xi^{\ell} \neq 1$
\begin{align*}
    \sum_{k \asymp NX}  \Upsilon_{n_1,n_2}(k)  \lambda_{\chi \xi^{\ell} } (k) \ll X^{1/3+2\eps/3},
\end{align*}
which is sufficient provided that $N < X^{1/3-2\eps/3}$. We merely require this bound on average over the complete family of $n_1,n_2 \sim N$, $\chi \in \widehat{\CC(d)}$, and $|\ell| \ll R_d$ (see Remark \ref{remark:largesieve}). It is tempting to think that this problem would be amenable to large sieve techniques but unfortunately, the cubic forms appearing in $\Upsilon_{n_1,n_2}(k)$ are entangled with $n_1,n_2$. We mention that similar sums for a fixed discriminant and a fixed binary cubic or quartic form arise naturally in the context of Manin's conjecture for Ch\^atelet surfaces \cite{Woo}.

The task is then to evaluate the main term coming from $\chi \xi^{\ell}=1$
\begin{align*}
    \sum_{n_1,n_2 \sim N} \frac{\beta_{n_1} \beta_{n_2}}{ \sqrt{n_1n_2} L(1,(\tfrac{4n_1n_2}{\cdot}))}  \sum_{k \asymp NX} \Upsilon_{n_1,n_2}(k)  \lambda_{1}(k).
\end{align*}
It is not obvious how to make the argument unconditional and we have to assume that $\lambda_{1}(k)$ may be replaced by the truncated approximation $\lambda^{\sharp}_{1}(k,T)$.   For the divisor function along binary cubic forms Greaves has shown an asymptotic formula with a power saving \cite{greaves}. 

Using Heath-Brown's large sieve for quadratic characters \cite{hbquad}, on average over $n_1,n_2$ we can replace the factor $ L(1,(\tfrac{4n_1n_2}{\cdot}))^{-1}$ by $\sum_{k \leq X^{\eps}} \frac{\mu(k)}{k} (\tfrac{4n_1n_2}{k})$  (see Lemma \ref{le:Linverse}). For the 
truncated approximation $ \lambda^\sharp_{1}(k,T)=\sum_{c \leq T} (\tfrac{4n_1n_2}{c})$ we can evaluate the sum over $y_1,y_2$ by applying Poisson summation twice, which produces a count for cubic congruences
\begin{align} \label{eq:cubicintro}
    \# \{ (y_1,y_2) \in (\Z/c\Z)^2: n_1 y_2^3 \equiv n_2 y_1^3 \pmod{c} \}.
\end{align}
Expanding \eqref{eq:cubicintro} by cubic Dirichlet characters, the principal characters give the main term. We then need to bound error terms of the form 
\begin{align*}
    \frac{1}{N}\sum_{c \leq T} \frac{1}{c}\sum_{k \leq X^{\eps}} \frac{\mu(k)}{k} \sum_{\substack{\chi \pmod{c} \\ \chi^3 = \chi_0 \neq \chi_0 }} \bigg| \sum_{n \sim N} \beta_n \chi(n) (\tfrac{n}{ck})\bigg|^2.
\end{align*}
This is bounded using the large sieve for sextic characters due to Baier and Young \cite{baieryoungcubic}.

\subsubsection{Type I$_2$ sums}
The obtained Type I and Type II ranges would already be enough for a lower bound of the correct order of magnitude for the number of primes $p=x^2+y^3$. To show an asymptotic formula, we also need to consider the Type I$_2$ sums. We are able to handle these for 
\begin{align*}
  D_2 < X^{1/4}  
\end{align*}
by using the spectral methods of GL(2) automorphic forms. The proof will appear in an upcoming joint work with Grimmelt \cite{GMquadratic}, as an application of the \emph{averages over orbits} introduced in \cite{GM}. To sketch the idea in the critical case $d \sim D_2$, $m \sim n \sim \sqrt{X/D_2}$, note that for a fixed $y$ we consider a variant of the divisor problem along a quadratic polynomial $\sum_{x} f(\tfrac{x}{A}) d(x^2+y^3)$. Applying Poisson summation on $x$ produces the expected main term and an error term with the dual variable of length $H \approx  \sqrt{D_2}$. 

Arguing similarly to Duke, Friedlander, and Iwaniec \cite{DFIprimes}, utilizing the factorization $y^3=y \cdot y^2$,  we get a spectral expansion for the error term that is morally of the shape (suppressing the Eisenstein series and other minor details)
\begin{align*}
 \frac{B^{1/2}}{H^{1/2}}  \sum_{y} f(\tfrac{y}{B}) \sum_{d \sim D_2} \alpha_d \sum_{\substack{j \\ |t_j| \ll 1 }}^{\Gamma_0(d)}   \lambda_j(y)  \sum_{h \sim H} \frac{\lambda_j(h)}{d^{1/2}}\sum_{z \in \Lambda_y} \sum_{\tau } u_j(\tau z).
\end{align*}
Here $u_j(z)$ are the ($L^2$-normalized) Maass cusp forms for the  Hecke congruence subgroup $\Gamma_0(d)$ and $\lambda_j(h)$ denote the Hecke eigenvalues. Here $\Lambda_y \subseteq \mathbb{H}$ is essentially the set Heegner points for the discriminant $y$ so that $\# \Lambda_y \approx y^{1/2}$, and $\tau$ runs over a subset of size $\ll d^{o(1)}$ in $\Gamma_0(d) \backslash \SL_2(\Z)$. 

Applying the Cauchy-Schwarz inequality with the variables $z,\tau$ on the outside (as in \cite{DFIprimes}), and using the Rankin-Selberg bound $\sum_{y} f (\tfrac{y}{B})|\lambda_j(y)|^2 \leq (dB)^{o(1)}B$, we would get the range $D_2 < X^{1/6}$. This turns out to be just barely insufficient for an asymptotic formula for primes. Taking advantage of the average over the orbits $z \in \Lambda_y$ allows us to essentially save  a factor of $(\#\Lambda_y)^{1/2} \approx X^{1/12}$, which bumps the range up to $D_2 < X^{1/4}$. We note that any range with  $D_2 > X^{1/6+\eta}$ would be sufficient for getting the asymptotic formula -- curiously, the prime detecting sieve has a discontinuity at $1/6$ in terms of the parameter $\frac{\log D_2}{\log X}$, caused by crossing from Type I$_2$ sums to Type II sums (see \cite{fordmaynard} for a detailed discussion of discontinuities in sieve methods). 

It seems difficult to improve the error term $o(1)$ in Theorem \ref{thm:asymp}. Even if we assumed a much more uniform conjecture with $\max\{\cdots\}^\eps$ replaced by $(\log \max\{\cdots\})^{O(1)}$, we would only improve the error term $o(1)$ to $O(\frac{\log \log X}{\log X})$. We are missing arithmetic information for multiple different types of sums, especially (i) three variables of size $X^{1/3+o(1)}$  and (ii) six variables of size $X^{1/6+o(1)}$.  

\subsubsection{Generalizations to other sequences}
We mainly leverage the fact that the first term is a quadratic monomial, so the discussion, at least in principle, extends primes of the form $X^2+y$ with $y$ weighted by other sequences $\gamma_y$. Of particular interest are sequences $\gamma_y$ with support of size $X^{1/2-\eta}$ for some small $\eta>0$. For such a sequence the diagonal terms in the Type II argument are admissible if $N > X^{\eta}$. Therefore, to produce a non-trivial Type II range, we would only require a small amount of saving instead of square-root saving in the corresponding convolution sums along Hecke eigenvalues
\begin{align} \label{eq:gammmasumintro}
    \sum_{v_1,v_2} \gamma_{v_1} \gamma_{v_2} \lambda_{\chi \xi^{\ell} }(n_1 v_2 - n_2 v_1).
\end{align}
Furthermore, since $n_1,n_2$ are small, the uniformity in the conductor does not seem to be a formidable issue for, say, automorphic techniques. It would therefore be of great interest to obtain bounds for sums of the form \eqref{eq:gammmasumintro}, as it would quickly translate to results about primes or at least products of $k\geq 2$ primes of the form $x^2+y$ with the weights $\gamma_y$.

\subsection{Acknowledgements}
I am grateful to James Maynard, Kyle Pratt, and Lasse Grimmelt for helpful discussions. The project has
received funding from the European Research Council (ERC) under the European Union's Horizon 2020 research and innovation programme (grant agreement No 851318).
\section{Restricted representations by indefinite binary quadratic forms} \label{sec:binaryquadr}
\subsection{Real quadratic fields}
Let $d> 0$  be a fundamental discriminant, that is, $d \equiv 0,1 \pmod{4}$ and $d$ is square-free except for a possible factor of $4$ or $8$. Consider the real quadratic field $\Q(\sqrt{d}) := \Q(\sqrt{d})$ and its ring of integers 
\begin{align*}
    O_d = \Z[\sqrt{\omega_d}], \quad  \text{where} \quad \omega_d := \begin{cases}
        \tfrac{1}{2}\sqrt{d}, \quad & d \equiv 0 \pmod{4} \\
        \tfrac{1}{2}(1+\sqrt{d}), \quad &d \equiv 1 \pmod{4}.
    \end{cases} 
\end{align*}
For $z = x+y\sqrt{d}, x,y \in \Q$ we let $z^\sigma := x-y\sqrt{d}$ denote the other embedding into $\R$. The norm $N_d:\Q(\sqrt{d}) \to \Q$ is defined by $ N_d (z):= z z^\sigma = x^2-dy^2$. The norm is extended to ideals $\a \subseteq O_d$ via $N_d(\a) = \# O_d/\a$, so that for any principal ideal $\a=(z)$ we have $N_d(\a) = |N_d(z)|$.

The group of units has rank 1 and it is generated by $-1$ and the fundamental unit $\eps_d$, which is defined as the smallest element $\eps \in O_d$ with $\eps > 1$ and $N_d(\eps) = \pm 1$. In other words, $\eps_d = \frac{a+b\sqrt{d}}{2}$ is the smallest element $> 1$ such that $(a,b) \in \Z$ is a solution to the Pell equations $a^2-db^2 = \pm 4$.

We let $\mathcal{I}_d$ denote the group of fractional ideals and $\mathcal{P}_d$ denote the subgroup of principal fractional ideals.  The ideal class group is then defined as
\begin{align*}
    \CC(d) := \mathcal{I}_d/\mathcal{P}_d. 
\end{align*} 
The class group $\CC(d)$ is a finite abelian group and the class number is defined as $h(d) := \# \CC(d).$ The regulator of the number field $\Q(\sqrt{d})$ is defined by  $R_d := \log \eps_d.$ We then have the \emph{Dirichlet class number formula} for real quadratic fields
\begin{align} \label{eq:classnumberformula}
   2 h(d)  R_d = \sqrt{d}  \, L(1,(\tfrac{d}{\cdot})), \quad L(s,(\tfrac{d}{\cdot})) := \sum_{n=1}^\infty(\tfrac{d}{n}) n^{-s}
\end{align}
where $(\tfrac{d}{n})$ is the primitive real character associated to the fundamental discriminant $d$. 

We define the hyperbolic coordinates of any non-zero $z \in \Q(\sqrt{d})$ via
\begin{align*}
r(z) = \sgn(z)|N_d(z)|^{1/2}, \quad   \alpha(z) = \tfrac{1}{2}\log |z/z^\sigma|.
\end{align*}
Then analogous to the polar coordinates for imaginary quadratic fields we have for $z z^\sigma > 0$
\begin{align*}
    z& =x+y\sqrt{d}=  r(z)e^{\alpha(z)}, \quad z^\sigma = r(z)e^{-\alpha(z)},\\
    x&= r(z) \cosh \alpha(z), \quad y = \frac{1}{\sqrt{d}}  r(z) \sinh \alpha(z).
\end{align*}
For $z z^\sigma < 0$ we have $ z^\sigma = -r(z)e^{-\alpha(z)}$ and the roles of $\cosh \alpha$ and $\sinh \alpha$ are swapped.

The Hecke characters \cite{hecke1,hecke2} $\{\xi^\ell\}_{\ell\in \Z}$ are defined for principal ideals $\a=(z)$ by the formula
\begin{align*}
  \xi^\ell(\a) = \xi^\ell(z) :=  e(\ell \tfrac{\alpha(z)}{R_d}), \quad e(x)= e^{2\pi i x}.
\end{align*}
It is quick to check using $\eps_d^\sigma = 1/\eps_d$ that this definition does not depend on the choice of the generator $z$. The character $\xi^\ell$ may be extended to a character on all fractional ideals, and the extension is unique up to multiplication by a class group character $\chi \in  \widehat{\CC(d)}.$

We denote
\begin{align}
    \psi_{\chi \xi^{\ell} ,s}(\a) :=  \xi^\ell(\a) \chi(\a) N_{d}(\a)^{-s/2}, \quad   \psi_{\chi \xi^{\ell} } :=   \psi_{\chi \xi^{\ell} ,0}.
\end{align}
We define the associated  Hecke eigenvalues
   \begin{align} \label{eq:hecekdef}
         \lambda_{\chi \xi^{\ell} }(n) := \sum_{\substack{\a \subseteq O_d \\ N_d(\a) = n}} \psi_{\chi \xi^{\ell} }(\a).
    \end{align}
For $\chi \xi^{\ell}=1$ we have the Dirichlet convolution $ \lambda_{1} =1\ast(\tfrac{d}{\cdot})$. For any $T>1$, we define the truncated approximation 
\begin{align*}
      \lambda^\sharp_{1}(n)= \lambda^\sharp_{1}(n,T) = \sum_{\substack{c|n \\ c\leq T}} (\tfrac{d}{c})
\end{align*}
and set
\begin{align} \label{eq:heckeflatdef}
   \begin{split}
       \lambda^\flat_{1}(n)&=\lambda^\flat_{1}(n,T) :=   \lambda_{1}(n) -  \lambda^\sharp_{1}(n,T), \quad 
         \lambda^\flat_{\chi \xi^{\ell} }(n) :=    \lambda_{\chi \xi^{\ell} }(n)   \quad \text{for} \quad \chi \xi^{\ell} \neq 1.
   \end{split} 
\end{align}
For $n=0$ we set $\lambda^\sharp_{1}(0) =\lambda^\flat_{1}(0) = \lambda_{\chi \xi^{\ell} }(0) = 0$.

The associated $L$-function
\begin{align*}
    L(s,\chi\xi^\ell) = \sum_n \frac{\lambda_{\chi \xi^{\ell} }(n)}{n^s}
\end{align*}
has a meromorphic continuation \cite{hecke1,hecke2}, with a simple pole only for $(\chi \xi^{\ell} ,s) = (1,1)$. It satisfies a functional equation of the form 
\begin{align*}
    L(s,\chi\xi^\ell) =  (d/\pi^2)^{1/2-s} G(1-s,\chi \xi^{\ell} ) L(1-s,\overline{\chi\xi^\ell}) , 
\end{align*}
where for $\sigma < \eta < 0$ \cite{colemannormforms,dukemultid,rademacher}
\begin{align*}
     |G(1-s,\chi \xi^{\ell} )|  \ll_\eta (1+t^2+\ell^2)^{1/2-\sigma}.
\end{align*}
In particular, for $\sigma < \eta < 0$ by the functional equation this implies
\begin{align} \label{eq:Lfuncbound}
    L(\sigma+it,\chi\xi^\ell)  \ll_\eta (d (1+t^2+\ell^2))^{1/2-\sigma}.
\end{align}

\subsection{Indefinite binary quadratic forms}
Let $d>0$ be a fundamental disrciminant and let $\B_d$ denote the set of binary quadratic forms $B \in \Z[X,Y]$ 
\begin{align*}
    &B(X,Y) = aX^2 +bXY +cY^2 \quad \text{of discriminant} \quad b^2-4ac=d.
\end{align*}
 The group $\GL_2(\Z)$ acts on $\B_d$ via
\begin{align*}
    B^\gamma (X,Y) = B(a_{11} X+a_{12} Y,a_{21} X+a_{22} Y), \quad \text{where} \quad \gamma =\mqty(a_{11} &a_{12} \\a_{21} & a_{22}) \in \GL_2(\Z).
\end{align*}
There is a well-known isomorphism between the $\GL_2(\Z)$ equivalence classes of such forms and the class group $\CC(d)$, which sends the class  of a form $B(X,Y) = aX^2 +bXY +cY^2 \in \B_d$ to the class in $\CC(d)$ of the ideal 
\begin{align*}
    (a,\tfrac{b+\sqrt{d}}{2})_\Z = \{ 2a x + \tfrac{b+\sqrt{d}}{2} y: x,y\in \Z \} \subseteq O_d.
\end{align*}

We now restrict to the specific binary quadratic forms $B(X,Y) = n_2 X^2-n_1 Y^2$ of discriminant $4n_1n_2$, for which we can argue directly. We obtain an expansion for representations restricted by a smooth weight on the variables.
\begin{lemma} \label{le:binaryrep}
Let $\epsilon \in \{\pm \}$.    Let $n_1,n_2 > 0$ be square-free, odd, and coprime and denote $d=4n_1n_2$. Let  $\mathfrak{n}_{2} \subseteq O_{d}$ denote the ideal such that $(4n_2)=\mathfrak{n}_2^2$. Let $\delta,\delta_0,\eta,Z>0$ and let $F:(0,\infty)^2 \to \C$ be a fixed smooth function  $F$ is supported on $(\eta,1/\eta)^2$ and $\epsilon(x_1^2-x_2^2) > \delta_0$. Suppose that for all $J \geq 0$ we have $\partial_{x_1}^{J_1}\partial_{x_2}^{J_2} F(x_1,x_2) \ll_{J_1,J_2} \delta^{-J_1-J_2}$. Then for any  $m \in \Z_{>0}$ we have
    \begin{align*}
        \sum_{\substack{x_1,x_2 \in \Z \\ |n_2 x_1^2-n_1 x_2^2| = m}} F(\tfrac{x_1 2 n_2}{Z},\tfrac{x_2 \sqrt{4 n_1n_2}}{Z}) 
       &= \frac{1}{h(d)R_d}\frac{1}{2\pi i}\sum_{\ell \in \Z} \sum_{\chi \in \widehat{\CC(d)}}   \int_{(0)} Z^{s}\check{F}(\ell,s)    \psi_{\chi \xi^{\ell} ,s}( \mathfrak{n}_2)\lambda_{\chi \xi^{\ell} ,s}(m) \d s, 
    \end{align*}
    where the coefficients $\check{F}(\ell,s)  $  satisfy the decay property that for any $J \geq 0$ 
    \begin{align} \label{eq:Fdecay}
        |\check{F}(\ell,s)| \ll_{\eta,J} \frac{1+ |\log \delta_0|}{(1+  \delta |\ell| /R_d +\delta |s|)^{J} }.
    \end{align}
\end{lemma}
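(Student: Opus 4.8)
The plan is to establish the identity by passing through the correspondence between the quadratic form $n_2X^2 - n_1Y^2$ and ideals in $O_d$, and then applying Mellin/Fourier inversion on the hyperbolic coordinates. First I would set up the dictionary: a solution $(x_1,x_2)$ with $|n_2x_1^2 - n_1x_2^2| = m$ and $\epsilon(n_2x_1^2 - n_1x_2^2) > 0$ corresponds, via the explicit isomorphism recalled just above the lemma, to an ideal $\a$ in a fixed ideal class (the class of the form $n_2X^2-n_1Y^2$, which is related to $\mathfrak{n}_2$ since $(4n_2) = \mathfrak{n}_2^2$) with $N_d(\a) = m$, together with a choice of generator modulo units. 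Concretely, $2n_2 x_1 + x_2\sqrt{d}$ (up to normalization) is an element of $\mathfrak{n}_2$-times-something of norm $\pm 4n_2 m$, and running over $x_1,x_2\in\Z$ amounts to running over such elements, hence over $(\a,\text{unit orbit})$. The key point is that elements of a fixed fractional ideal of a given norm, up to the unit group $\langle -1,\eps_d\rangle$, biject with the relevant ideals; the sign $\epsilon$ and the support condition $\epsilon(x_1^2-x_2^2)>\delta_0$ pin down which of the two embeddings is larger, i.e.\ the sign of $z z^\sigma$ and a sign of $\alpha(z)$.

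Next I would insert the orthogonality of Hecke characters to detect the fixed ideal class: for a finite abelian group, $\frac{1}{h(d)}\sum_{\chi\in\widehat{\CC(d)}}\chi(\a)\overline{\chi(\a_0)} = \1_{[\a]=[\a_0]}$. Writing the sum over elements $z$ of $O_d$ (in the appropriate ideal, of norm related to $m$) and parametrizing $z = r(z)e^{\alpha(z)}$ in hyperbolic coordinates, the unit $\eps_d$ acts by $\alpha \mapsto \alpha + R_d$, so a fundamental domain for the unit action is $\alpha\in[0,R_d)$ (with $r(z)$ determined by the norm). To unfold the sum over all $z$ (not just those in the fundamental domain) I would use the Hecke characters $\xi^\ell(z) = e(\ell\alpha(z)/R_d)$: Fourier expansion in the variable $\alpha/R_d \in \R/\Z$ converts the condition "$\alpha$ lies in a window" plus the weight $F$ evaluated at $x_1 = r\cosh\alpha$, $x_2 = \frac{r}{\sqrt d}\sinh\alpha$ (or swapped, per the sign of $zz^\sigma$) into a sum over $\ell$ of Fourier coefficients. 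The Mellin transform in the radial variable $r$ (equivalently in $N_d = m$, pulling out $Z^s$ via the scaling of the arguments of $F$) produces the contour integral $\int_{(0)} Z^s \check F(\ell,s)\,ds$, and the combined Mellin–Fourier transform of $F$ against $e^{s\alpha}$ and $e(\ell\alpha/R_d)$ is exactly $\check F(\ell,s)$; collecting the $r$-scaling and the normalization by the number of generators gives the prefactor $\frac{1}{h(d)R_d}$ and the factor $\psi_{\chi\xi^\ell,s}(\mathfrak{n}_2)$ tracking the fixed class and the normalization $2n_2$ built into the argument $x_1\cdot 2n_2/Z$.

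For the decay estimate \eqref{eq:Fdecay} I would argue as follows. After the change of variables to $(r,\alpha)$, $\check F(\ell,s)$ is an integral of the form $\int\int F\big(\ldots\big) r^{-s} e(-\ell\alpha/R_d)\,\frac{dr}{r}\,d\alpha$ over the region where $F$ is supported. In the $\alpha$-direction the support of $F$ in the original variables, combined with $F$ being supported on $\eta < x_i < 1/\eta$ and $\epsilon(x_1^2-x_2^2) > \delta_0$, constrains $\alpha$ to an interval of length $\ll 1 + |\log\delta_0|$ (the $\log\delta_0$ comes from $x_1,x_2$ being forced close together when their squares differ by only $\delta_0$, so $|\alpha|$ can be as small as $\asymp\delta_0$ but no smaller than a fixed multiple, while the cosh/sinh growth caps it from above); repeated integration by parts in $\alpha$ gains a factor $(1 + \delta|\ell|/R_d)^{-J}$ from the derivative bounds $\partial^J F \ll \delta^{-J}$ and the oscillation $e(-\ell\alpha/R_d)$, and similarly integration by parts in $\log r$ (equivalently shifting the $s$-contour, using holomorphy of $\check F$ and the derivative bounds) gains $(1+\delta|s|)^{-J}$. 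The two gains combine into $(1+\delta|\ell|/R_d+\delta|s|)^{-J}$ and the $\alpha$-measure gives the $(1+|\log\delta_0|)$ factor, which is \eqref{eq:Fdecay}.

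The main obstacle I expect is the bookkeeping in the first step: getting the correspondence between integer solutions $(x_1,x_2)$ and ideals \emph{exactly} right, including (i) which ideal class is hit — one must verify it is the class of $\mathfrak{n}_2$ and that the correct character value is $\psi_{\chi\xi^\ell,s}(\mathfrak{n}_2)$ rather than some conjugate or twist, using $(4n_2) = \mathfrak{n}_2^2$ and the oddness/coprimality/square-freeness of $n_1,n_2$ to control ramification at $2$ and the index of $(2n_2,x_2\sqrt d)$-type sublattices; (ii) correctly accounting for the two sign branches ($zz^\sigma \gtrless 0$, governed by $\epsilon$) and the attendant swap of $\cosh$ and $\sinh$; and (iii) making sure no extra factor of $2$ (from $\pm 1 \in O_d^\times$, or from $m$ versus $4n_2 m$) is lost, since these all feed into the clean prefactor $\frac{1}{h(d)R_d}$. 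Everything downstream — the Fourier expansion in $\alpha$, the Mellin transform in $r$, and the integration-by-parts bounds — is then routine, though the derivation of the precise shape of the support in $\alpha$ (hence the $1+|\log\delta_0|$) requires a short explicit computation with the hyperbolic parametrization.
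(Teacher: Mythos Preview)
Your approach is correct and essentially the same as the paper's: pass to elements $z\in O_d$ via $4n_2(n_2x_1^2-n_1x_2^2)=(2n_2x_1)^2-dx_2^2$, use hyperbolic coordinates $(r,\alpha)$, Fourier expand in $\alpha/R_d$ and Mellin transform in $r$, then detect principality by class group characters and factor out $\mathfrak n_2$. The bookkeeping you flag as the main obstacle is resolved cleanly in the paper by the observation that the condition $2n_2\mid x_1$ is \emph{automatic} once $4n_2\mid |N_d(z)|$ (using that $n_1,n_2$ are odd, square-free, coprime), so the sum over $(x_1,x_2)$ becomes an unrestricted sum over $z\in O_d$ with $|N_d(z)|=4n_2m$; unique factorization then gives $\mathfrak n_2\mid(z)$, producing the factor $\psi_{\chi\xi^\ell,s}(\mathfrak n_2)$ directly. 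One small correction to your decay sketch: when the arguments $x_1,x_2$ of $F$ are close (difference $\asymp\delta_0$), the angle $\alpha$ becomes \emph{large} (since $\tanh\alpha\to 1$), not small --- the $(1+|\log\delta_0|)$ factor comes from the upper end of the $\alpha$-range (equivalently, from $r=\sqrt{x_1^2-x_2^2}$ being allowed as small as $\sqrt{\delta_0}$ in the measure $\tfrac{dr}{r}$), while $\alpha$ is bounded below by an $\eta$-dependent constant.
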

\begin{proof}
We consider $\epsilon=+$, the case $\epsilon=-$ is similar. Since $4n_2(n_2 x_1^2-n_1 x_2^2)= (2n_2x_1)^2-4n_1n_2 x_2^2$, the representations $|n_2 x_1^2-n_1 x_2^2| = m$ are in 1-to-1 correspondence with  $z=x_1+ x_2 \sqrt{d} \in O_d$ which satisfy $|N_d(z)|=4 n_2 m$ and $2 n_2  | x_1$.  The condition $2n_2|x_1$ may be dropped since it is implied by $
  4n_2|  |N_d(z)| = |x_1^2-4n_1n_2x_2^2| $. By unique prime factorization of ideals we have $\mathfrak{n}_2 | (z)$. Let $f(r,\alpha)$ be defined by $f(r,\alpha)=F(x_1,x_2 \sqrt{d})$ for $z=r e^\alpha \in \Q(\sqrt{d})$. We then have 
    \begin{align*}
        \sum_{\substack{x_1,x_2 \in \Z \\ |n_2 x_1^2-n_1 x_2^2| = m}} F(\tfrac{x_1 2 n_2}{ Z},\tfrac{x_2 \sqrt{4n_1n_2}}{Z})  = &\sum_{\substack{z \in O_d  \\ |N_d(z)| = 4n_2 m } } F(\tfrac{x_1}{ Z},\tfrac{x_2 \sqrt{d}}{Z})
        = \sum_{\substack{\a=(z) \\ N_d(\a) = 4n_2 m }}  \sum_{k \in \Z} f(\tfrac{r(z)}{Z},\alpha(z) + k R_d).
        \\
    \end{align*}
   Denoting $G(r, \alpha) = \sum_{k \in \Z} f(r,\alpha + k R_d)$, applying Mellin inversion and Fourier series expansion to $G(r/Z, \alpha)$, and expanding the condition that $\a$ is a principal ideal by class group characters  we get
\begin{align*}
        \sum_{\substack{x_1,x_2 \in \Z \\ |n_2 x_1^2-n_1 x_2^2| = m}} F(\tfrac{x_1 2 n_2}{ Z},\tfrac{x_2 \sqrt{4n_1n_2}}{Z})   =& \frac{1}{h(d) R_d}\frac{1}{2\pi i}\sum_{\ell \in \Z} \sum_{\chi \in \widehat{C}(d)}   \int_{(0)} Z^{s}\check{F}(\ell,s)    \sum_{\substack{\a  \in O_d \\ N_d(\a) =4 n_2 m }} \psi_{\chi \xi^{\ell} ,s}(\a) \d s \\
       =& \frac{1}{h(d) R_d}\frac{1}{2\pi i}\sum_{\ell \in \Z} \sum_{\chi \in \widehat{C}(d)}   \int_{(0)} Z^{s}\check{F}(\ell,s)    \psi_{\chi \xi^{\ell} ,s}(\mathfrak{n}_2)\sum_{\substack{\a  \in O_d \\ N_d(\a) = m  }} \psi_{\chi \xi^{\ell} ,s}(\a) \d s.
\end{align*}
By unfolding the sum over $k \in \Z$ we have
  \begin{align*}
    \check{F}(\ell,s)  &=  \int_0^{R_d}\int_0^\infty  G(r,\alpha) e(-\ell \tfrac{\alpha}{R_d})r^{s} \frac{\d r}{r} \d \alpha =\int_{\R}\int_0^\infty  F(r \cosh \alpha, r \sinh \alpha ) e(-\ell \tfrac{\alpha}{R_d})r^{s} \frac{\d r}{r} \d \alpha.
\end{align*}
The decay property \eqref{eq:Fdecay} follows from iterating integration by parts with respect to the symmetric differential operator
\begin{align*}
    \Xi_{r,\alpha} = 1+ (\tfrac{\delta}{  2\pi i} \partial_\alpha)^2 + (i\delta  |r| \partial_r)^2,
\end{align*}
that is, for any $J \geq 0$ we have
\begin{align*}
      \check{F}(\ell,s)  &=  \frac{1}{(1+(\delta \ell /R_d)^2 + (\delta |s|)^2)^J} \int_{\R}\int_0^\infty   F(r \cosh \alpha, r \sinh \alpha ) \Xi_{r,\alpha}^J\bigg( e(-\ell \tfrac{\alpha}{R_d})r^{s} \bigg) \frac{\d r}{r} \d \alpha \\
       &= \frac{1}{(1+(\delta \ell /R_d)^2 + (\delta |s|)^2)^J} \int_{\R}\int_0^\infty   \Xi_{r,\alpha}^J \bigg(F(r \cosh \alpha, r \sinh \alpha )\bigg)  e(-\ell \tfrac{\alpha}{R_d})r^{s} \frac{\d r}{r} \d \alpha  \\
      & \ll_{\eta,J} \frac{1+ |\log \delta_0|}{(1+  \delta |\ell| /R_d +\delta |s|)^{J} }. 
\end{align*}
\end{proof}
We also need the following variant, where the smooth weight depends only on the hyperbolic angle. The proof is similar but easier.
\begin{lemma} \label{le:binaryrepangle}
    Let $n_1,n_2 > 0$ be square-free, odd, and coprime and denote $d=4n_1n_2$. Let  $\mathfrak{n}_{2} \subseteq O_{d}$ denote the ideal such that $(4n_2)=\mathfrak{n}_2^2$. Let $\delta,K>0$ and let $f:\R \to \C$ be a smooth function supported in $[-K,K]$ satisfying $\partial_x^J f(x) \ll_J \delta^{-J}$. Then for any  $m \in \Z_{>0}$ we have
    \begin{align*}
        \sum_{\substack{x_1,x_2 \in \Z  \\ |n_2 x_1^2-n_1 x_2^2| = m}} f(\log| \tfrac{x_1 2 n_2 + x_2\sqrt{4 n_1 n_2}}{x_1 2 n_2-x_2\sqrt{4 n_1 n_2} }|) 
       &= \frac{1}{h(d)R_d}\sum_{\ell \in \Z} \sum_{\chi \in \widehat{\CC(d)}} \check{f}(\ell)   \psi_{\chi \xi^{\ell} }( \mathfrak{n}_2)\lambda_{\chi \xi^{\ell} }(m),
    \end{align*}
    where the coefficients $\check{f}(\ell)  $  satisfy  $|\check{f}(\ell)| \ll_{J} K (1+  \delta |\ell| /R_d )^{-J}$ for all $J>0$.
\end{lemma}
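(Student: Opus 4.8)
The plan is to repeat the argument proving Lemma~\ref{le:binaryrep}, but without the Mellin transform: here the weight depends only on the hyperbolic angle of $z$, so the expansion is a plain Fourier series in $\ell$ rather than a Mellin--Fourier expansion in $(s,\ell)$, and the decay of the coefficients comes from integrating by parts in the angular variable alone.

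First I would set $d=4n_1n_2$ and invoke the same dictionary as in the proof of Lemma~\ref{le:binaryrep}: the pairs $(x_1,x_2)\in\Z^2$ with $|n_2x_1^2-n_1x_2^2|=m$ are in bijection with the $z\in O_d$ satisfying $|N_d(z)|=4n_2m$ via $z=2n_2x_1+x_2\sqrt d$ (the integrality and divisibility conditions being disposed of exactly as there, using that $n_1,n_2$ are odd and square-free). Since $z^\sigma=2n_2x_1-x_2\sqrt d$, the quantity inside the logarithm is precisely $z/z^\sigma$, so by the definition $\alpha(z)=\tfrac12\log|z/z^\sigma|$ the left-hand side of the lemma becomes $\sum_{z\in O_d,\,|N_d(z)|=4n_2m}f\big(2\alpha(z)\big)$.

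Next I would group this sum by the ideals $(z)$. Every prime dividing $2n_2$ ramifies in $O_d$, so $\mathfrak{n}_2$ is the product of those ramified primes with $N_d(\mathfrak{n}_2)=4n_2$, and $4n_2\mid N_d(z)$ forces $\mathfrak{n}_2\mid(z)$; writing $(z)=\mathfrak{n}_2\mathfrak{a}$ one gets $N_d(\mathfrak{a})=m$, and $\mathfrak{a}\mapsto\mathfrak{n}_2\mathfrak{a}$ is a bijection between the ideals of norm $m$ and those of norm $4n_2m$ --- this is the one bookkeeping point to verify. The generators of a fixed principal ideal $(z_0)$ are the $\pm\eps_d^{k}z_0$ with $k\in\Z$, and since $\alpha(-w)=\alpha(w)$ and $\alpha(\eps_d)=\log\eps_d=R_d$ (from $|\eps_d^\sigma|=\eps_d^{-1}$), their combined contribution is $2\sum_{k\in\Z}f(2\alpha(z_0)+2kR_d)$, a function of $\alpha(z_0)$ of period $R_d$. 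Expanding this in a Fourier series and using $e(\ell\alpha(z_0)/R_d)=\xi^\ell\big((z_0)\big)=\xi^\ell(\mathfrak{n}_2\mathfrak{a})$, I would detect ``$\mathfrak{n}_2\mathfrak{a}$ principal'' by $h(d)^{-1}\sum_{\chi\in\widehat{\CC(d)}}\chi(\cdot)$, move the $\chi$-sum inside, and use $\chi\xi^{\ell}(\mathfrak{n}_2\mathfrak{a})=\psi_{\chi\xi^{\ell}}(\mathfrak{n}_2)\,\psi_{\chi\xi^{\ell}}(\mathfrak{a})$ together with $\sum_{N_d(\mathfrak{a})=m}\psi_{\chi\xi^{\ell}}(\mathfrak{a})=\lambda_{\chi\xi^{\ell}}(m)$. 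This yields exactly the asserted identity, with
\[
  \check f(\ell)=2\int_{\R}f(2\alpha)\,e\big(-\tfrac{\ell\alpha}{R_d}\big)\,\d\alpha,
\]
the factor $2$ coming from the units $\pm1$ and being absorbed into $\check f$; consistency with the prefactor $1/(h(d)R_d)$ is checked by comparing both sides on a single norm class, where $\mathfrak{a}$ and $\mathfrak{a}^\sigma$ enter symmetrically.

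Finally, the coefficient bound $|\check f(\ell)|\ll_J K(1+\delta|\ell|/R_d)^{-J}$ follows by combining the trivial estimate $|\check f(\ell)|\ll K$ (the integrand is supported on an $\alpha$-interval of length $\ll K$) with $J$-fold integration by parts in $\alpha$, each step contributing a factor $\ll R_d/|\ell|$ and one derivative of $f$, which is $\ll\delta^{-1}$ by hypothesis. I do not anticipate a genuine obstacle: the only mildly delicate points, namely the unit contribution and the factorisation of the norm class through $\mathfrak{n}_2$, already occur in Lemma~\ref{le:binaryrep}, and the absence of any radial variable is precisely why the present proof is ``easier''.
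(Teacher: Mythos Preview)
Your proposal is correct and follows exactly the approach the paper has in mind: the paper gives no explicit proof of this lemma, merely remarking that ``the proof is similar but easier'' than that of Lemma~\ref{le:binaryrep}, and your write-up is precisely that simplification---the same dictionary $z=2n_2x_1+x_2\sqrt d$, the same factorization through $\mathfrak{n}_2$, and a Fourier series in the periodic angular variable in place of the Mellin--Fourier expansion in $(s,\ell)$. Your formula $\check f(\ell)=2\int_\R f(2\alpha)e(-\ell\alpha/R_d)\,\d\alpha$ and the decay bound via $J$-fold integration by parts are exactly what the paper's argument would produce.
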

\section{Lemmas}
We need the following truncated version of the Poisson summation formula, which follows by repeated integration by parts for $|h| > H$.
\begin{lemma}[Truncated Poisson summation formula] \label{le:poisson} Let $\delta,\eta >0$ and  let $f$ be a smooth function supported in $[1,2]$ with $f^{(J)} \ll_J\delta^{-J}$ for all $J\geq 0$. Let $N> 1$ and $q \in \Z_{>0}$. Let $Z > 1$ and let
\begin{align*}
  H \geq  N^\eta \delta^{-1} q/N.  
\end{align*}
Then for any $C >0$
\begin{align*}
\sum_{n \equiv a \, (q)} f(\tfrac{n}{N}) = \frac{N}{q} \widehat{f}(0) + \frac{N}{q} \sum_{1 \leq |h| \leq H} \widehat{f} ( \tfrac{h N}{q}) e_q (ah) + O_{\eta,C}(N^{-C}),
\end{align*}
where $\widehat{f}(h):= \int_\R f(u)e(-hu) du$ is the Fourier transform.
\end{lemma}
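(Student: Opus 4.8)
The plan is to reduce the statement to the classical Poisson summation formula by an affine change of variables. Writing $n = a + qm$ with $m$ ranging over $\Z$, the left-hand side becomes $\sum_{m\in\Z} g(m)$ where $g(t) := f\big((a+qt)/N\big)$ is smooth and compactly supported. Applying $\sum_{m\in\Z} g(m) = \sum_{h\in\Z}\widehat g(h)$ and computing $\widehat g(h) = \int_\R f\big((a+qt)/N\big)\,e(-ht)\,dt$ via the substitution $u = (a+qt)/N$, which sends $dt$ to $(N/q)\,du$, gives $\widehat g(h) = (N/q)\,e_q(ah)\,\widehat f(hN/q)$. Hence, exactly,
\[
\sum_{n\equiv a\,(q)} f(\tfrac nN) \;=\; \frac Nq\sum_{h\in\Z}\widehat f\big(\tfrac{hN}{q}\big)\,e_q(ah).
\]
The $h=0$ term is the claimed main term $(N/q)\widehat f(0)$, the terms $1\le|h|\le H$ are the secondary main terms kept in the formula, and it remains only to show that the tail $|h|>H$ contributes $O_{\eta,C}(N^{-C})$.

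For the tail I would use the smoothness and localization of $f$. Since $f$ is supported in $[1,2]$ with $f^{(J)}\ll_J\delta^{-J}$, integrating by parts $J$ times (the boundary terms vanish) yields, for every $J\ge 0$ and $\xi\neq 0$,
\[
\widehat f(\xi) \;=\; \frac{1}{(2\pi i\xi)^J}\int_1^2 f^{(J)}(u)\,e(-\xi u)\,du \;\ll_J\; (\delta|\xi|)^{-J},
\]
the interval of integration having bounded length. Taking $\xi = hN/q$ and summing over $|h|>H$ with $J\ge 2$ gives
\[
\frac Nq\sum_{|h|>H}\big|\widehat f(\tfrac{hN}{q})\big| \;\ll_J\; \frac Nq\Big(\frac{q}{\delta N}\Big)^J\sum_{|h|>H}|h|^{-J} \;\ll_J\; \frac1\delta\Big(\frac{q}{\delta NH}\Big)^{J-1}.
\]
The hypothesis $H\ge N^\eta\delta^{-1}q/N$ is precisely what makes $q/(\delta NH)\le N^{-\eta}$, and it also gives $\delta^{-1}\le NH/(N^\eta q)\ll NH$; so the tail is $\ll_J \delta^{-1}N^{-\eta(J-1)}\ll_J HN^{1-\eta J}$, which falls below $N^{-C}$ once $J$ is chosen large enough in terms of $\eta$ and $C$ (and the polynomial size of $H$, harmless in all intended applications).

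There is no serious obstacle here: the argument is bookkeeping layered on top of Poisson summation. The only point deserving a little care is keeping the tail estimate uniform in the parameters and using the hypothesis on $H$ in its sharp form $q/(\delta NH)\le N^{-\eta}$, so that a single integration-by-parts pass with a large fixed $J$ already produces an arbitrary negative power of $N$; the parameter $Z$ appearing in the statement plays no role and may be disregarded.
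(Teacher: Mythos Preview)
Your proof is correct and follows exactly the approach indicated in the paper, which states only that the lemma ``follows by repeated integration by parts for $|h| > H$''; you have simply written out the details of that computation. Your observation that the parameter $Z$ plays no role is also accurate.
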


The following lemma considers a smoothed sum over $\lambda_{\chi \xi^{\ell} }(n)$ and shows that, if the length of the sum is longer than the conductor, then we get almost perfect cancellation.
\begin{lemma} \label{le:smooothlambda}
 Let $\delta >0$ and  let $f$ be a smooth function supported in $[1,2]$ with $f^{(J)} \ll_J\delta^{-J}$ for all $J\geq 0$. Let $\lambda_{\chi \xi^{\ell} }(n)$ be eigenvalues \eqref{eq:hecekdef} for a fundamental discriminant $d > 0$ with $\chi \xi^{\ell} \neq 1$ and let $q \in \Z_{>0}$. Suppose that for some $\eta > 0$ we have $N> N^\eta  q d(1+\delta^{-2}+\ell^2).$ Then for any $C>0$
 \begin{align*}
     \sum_n f(\tfrac{n}{N})\lambda_{\chi\xi^\ell}(q n) \ll_{\eta,C} q^{o(1)} N^{-C}.
 \end{align*}
\end{lemma}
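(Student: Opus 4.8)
The plan is to reduce the smoothed sum over Hecke eigenvalues to a contour integral involving the Hecke $L$-function $L(s,\chi\xi^\ell)$ and then shift the contour far to the left, using the polynomial bound \eqref{eq:Lfuncbound} together with the rapid decay of the Mellin transform of $f$ to control the resulting integral. Concretely, first I would write $\lambda_{\chi\xi^\ell}(qn)$ in terms of $\lambda_{\chi\xi^\ell}$ evaluated at divisors: since the eigenvalues are multiplicative-type coefficients of $L(s,\chi\xi^\ell)$, one has $\sum_n \lambda_{\chi\xi^\ell}(qn) n^{-s} = P_q(s)\, L(s,\chi\xi^\ell)$ for a Dirichlet polynomial $P_q(s)$ supported on divisors of (powers of) $q$, with $|P_q(s)| \ll q^{o(1)}$ for $\Re s$ bounded below, say $\Re s \geq -1$. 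Then by Mellin inversion,
\begin{align*}
\sum_n f(\tfrac{n}{N})\lambda_{\chi\xi^\ell}(qn) = \frac{1}{2\pi i}\int_{(2)} N^s \widetilde{f}(s)\, P_q(s)\, L(s,\chi\xi^\ell)\, \d s,
\end{align*}
where $\widetilde{f}(s) = \int_0^\infty f(u) u^{s-1}\,\d u$ is entire and satisfies $\widetilde{f}(s) \ll_{A} \delta^{-A}(1+|s|)^{-A}$ for every $A>0$ by repeated integration by parts (using $f^{(J)}\ll_J \delta^{-J}$ and support in $[1,2]$).

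Next I would shift the contour to $\Re s = -A$ for a large parameter $A = A(C,\eta)$ to be chosen. Since $\chi\xi^\ell \neq 1$, the $L$-function $L(s,\chi\xi^\ell)$ is entire, so there are no poles crossed and no residue terms; the integrand is holomorphic in the strip $-A \leq \Re s \leq 2$. On the new line, I would bound $N^s = N^{-A}$, use $|P_q(-A+it)| \ll q^{o(1)}(1+|t|)^{O(A)}$ (a crude bound suffices since $P_q$ is a short Dirichlet polynomial), use the convexity-type bound \eqref{eq:Lfuncbound} in the form $|L(-A+it,\chi\xi^\ell)| \ll_A (d(1+t^2+\ell^2))^{1/2+A}$, and use the decay $|\widetilde{f}(-A+it)| \ll_{A'} \delta^{-A'}(1+|t|)^{-A'}$ with $A'$ chosen large enough (depending on $A$) to make the $t$-integral converge and be $O(1)$ in $t$. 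Collecting the $t$-independent factors, the whole integral is
\begin{align*}
\ll_{A} q^{o(1)}\, N^{-A}\, \delta^{-O(A)}\, (d(1+\ell^2))^{1/2+A}.
\end{align*}

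Finally I would feed in the hypothesis $N > N^\eta q d(1+\delta^{-2}+\ell^2)$. This gives $d \leq N^{1-\eta}$, $1+\ell^2 \leq N^{1-\eta}$, $\delta^{-2} \leq N^{1-\eta}$, and $q \leq N^{1-\eta}$, so $d(1+\ell^2)\delta^{-O(A)} \leq N^{O_A(1-\eta)\cdot \text{const}}$ and altogether the bound becomes $\ll_A q^{o(1)} N^{-A + O_A((1-\eta)\cdot\text{const})}$; choosing $A$ large enough in terms of $C$ and $\eta$ (so that the net exponent is below $-C$) yields $\ll_{\eta,C} q^{o(1)} N^{-C}$, as claimed. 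The only slightly delicate point is bookkeeping the polynomial-in-$A$ exponents of $d$, $\ell$, $\delta^{-1}$, $q$ produced by the convexity bound and by $P_q$, and checking that the $\eta$-power saving in the hypothesis really does dominate them once $A$ is fixed — but since every one of those quantities is $\leq N^{1-\eta}$ and there are only finitely many of them with exponents that are linear in $A$, this is a routine (if careful) choice of $A \asymp (C + \text{const})/\eta$. The main obstacle, such as it is, is ensuring the Dirichlet polynomial identity $\sum_n \lambda_{\chi\xi^\ell}(qn)n^{-s} = P_q(s)L(s,\chi\xi^\ell)$ with the stated control on $P_q$; this follows from the Euler product of $L(s,\chi\xi^\ell)$ by a purely local computation at the primes dividing $q$, using that $\lambda_{\chi\xi^\ell}$ is a normalized (degree-$2$) Hecke coefficient so that the local factors are rational functions of $p^{-s}$ of bounded degree.
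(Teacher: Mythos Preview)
Your approach is the same as the paper's---Mellin inversion, contour shift far to the left, and a Hecke-relation reduction for the $q$-shift---but the final bookkeeping has a real gap that is not ``routine''.

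Two concrete issues. First, on the line $\Re s=-A$ the Dirichlet polynomial $P_q$ satisfies $|P_q(-A+it)|\ll q^{A+o(1)}$ (with no growth in $t$), not $q^{o(1)}(1+|t|)^{O(A)}$ as you wrote; the extra $q^A$ must be tracked. Second, and more importantly, your endgame ``each of $q,d,1+\ell^2,\delta^{-2}$ is $\le N^{1-\eta}$, so a linear-in-$A$ collection of such factors is beaten by $N^{-A}$'' is false for small $\eta$: with $k$ such factors you get $N^{-A}\cdot N^{kA(1-\eta)}=N^{A(k(1-\eta)-1)}$, which only tends to $-\infty$ when $\eta>1-1/k$. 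With your crude Mellin bound $\widetilde f(-A+it)\ll \delta^{-A'}(1+|t|)^{-A'}$ (forcing $A'\gtrsim 2A$), together with $q^A$ from $P_q$ and $(d(1+\ell^2))^{1/2+A}$ from the $L$-function, you have at least three independent factors and the argument fails unless $\eta>2/3$.

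The paper closes this by (i) first reducing to $q=1$ via the Hecke relation $\lambda_{\chi\xi^\ell}(qn)=\lambda_{\chi\xi^\ell}(q_0)\sum_{r\mid\gcd(q_1,n)}\mu(r)(\tfrac{d}{r})\lambda_{\chi\xi^\ell}(q_1/r)\lambda_{\chi\xi^\ell}(n/r)$, which costs only $q^{o(1)}$ many terms and removes the $q^A$ loss, and (ii) using the sharper Mellin decay $\widetilde f(\sigma+it)\ll_J(1+\delta|t|)^{-J}$. The point of (ii) is that it combines with the elementary inequality $1+t^2+\ell^2\le(1+\delta^{-2}+\ell^2)(1+(\delta t)^2)$ so that, after the substitution $u=\delta t$, the $t$-integral produces exactly $(d(1+\delta^{-2}+\ell^2))^{1/2+\sigma}\,\delta^{-1}$. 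This is a single factor to the power $1+\sigma$ (up to $\delta^{-1}\le(1+\delta^{-2}+\ell^2)^{1/2}$), and now the hypothesis $d(1+\delta^{-2}+\ell^2)<N^{1-\eta}$ applies as a \emph{product}, giving $N^{-\sigma}(N^{1-\eta})^{1+\sigma}=N^{1-\eta-\sigma\eta}\le N^{-C}$ for $\sigma$ large in terms of $C,\eta$. Your version separates $\delta^{-1}$ from $1+\ell^2$ and thereby loses the product structure of the hypothesis; with the two fixes above your outline goes through.
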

\begin{proof}
 Let $q=q_0 q_1$ with $q_0=\gcd(q,(2d)^\infty)$. Then we have the Hecke relation
 \begin{align*}
     \lambda_{\chi\xi^\ell}(q n)  = \lambda_{\chi\xi^\ell}(q_0) \sum_{r|\gcd (q_1,n)}\mu(r) (\tfrac{d}{r})\lambda_{\chi\xi^\ell}(\tfrac{q_1}{r}) \lambda_{\chi\xi^\ell}(\tfrac{n}{r}),
 \end{align*}
 which reduces the proof to the case $q=1$. By Mellin inversion we have
    \begin{align*}
         \sum_n f(\tfrac{n}{N})\lambda_{\chi\xi^\ell}(n) = \frac{1}{2 \pi i} \int_{(2)} N^{s} \widetilde{f}(s) L(s,\chi\xi^\ell) \d s,
    \end{align*}
    where the Mellin transform satisfies for any $J > 0$ by repeated integration by parts
    \begin{align*}
         \widetilde{f}(s) \ll_{J} (1+\delta |t|)^{-J}.
    \end{align*}
 Then by shifting the contour to $(-\sigma)$  using the bound \eqref{eq:Lfuncbound} we have
    \begin{align*}
         \sum_n f(\tfrac{n}{N})\lambda_{\chi\xi^\ell}(n) &\ll_{J} N^{-\sigma} \int_{\R} (1+\delta |t|)^{-J} (d (1+t^2+\ell^2))^{1/2+\sigma} \d t \ll_{\eta,C}  N^{-C}
    \end{align*}
 if we let $J>2\sigma+3$ and take $\sigma$ sufficiently large in terms of $C$ and $\eta$.
\end{proof}
We also need the following lemma for a smoothed sum over $\lambda_{1}^\flat(n,T)$, which gives a non-trivial bound as soon as $T$ is a bit larger than $q d^{1/2}$.
\begin{lemma} \label{le:lambdaflatsmooth}
     Let $\delta >0$ and  let $f$ be a smooth function supported in $[1,2]$ with $f^{(J)} \ll_J\delta^{-J}$ for all $J\geq 0$. Let $\lambda_{1}^\flat(n,T)$ be as in \eqref{eq:heckeflatdef} for a fundamental discriminant $d > 1$. Let $N > 1$ and let $q \in \Z_{>0}$. Then for any $\nu \in (0,\delta)$ we have
     \begin{align*}
           \sum_n f(\tfrac{n}{N})\lambda^\flat_{\chi\xi^\ell}(q n,T)  \ll \nu^{-1}\frac{q N}{T}d^{1/2} + \nu N^{1+o(1)}.
     \end{align*}
\end{lemma}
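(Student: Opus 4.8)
The plan is to expand $\lambda^\flat_{1}(qn,T)=\lambda_{1}(qn)-\lambda^\sharp_{1}(qn,T)=\sum_{c\mid qn,\ c>T}(\tfrac{d}{c})$ and then to pass to the \emph{complementary divisor} $e:=qn/c$, which is short: $c>T\iff e<qn/T$. If $g:=\gcd(e,q)$ and $e_1:=e/g$, $q_1:=q/g$, then $e\mid qn\iff e_1\mid n$, so writing $n=e_1n'$ one has $qn/e=q_1n'$ and $(\tfrac{d}{qn/e})=(\tfrac{d}{q_1})(\tfrac{d}{n'})$ by complete multiplicativity of the Kronecker symbol, while $e<qn/T$ becomes $n'>T/q_1$. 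Summing over $e$, organised by $g\mid q$ and by $e_1$ coprime to $q_1$, this turns the sum into
\[
\sum_n f\!\left(\tfrac{n}{N}\right)\lambda^\flat_{1}(qn,T)=\sum_{g\mid q}\left(\tfrac{d}{q_1}\right)\sum_{\substack{e_1\ge 1\\ (e_1,q_1)=1}}\Sigma_g(e_1),\qquad
\Sigma_g(e_1):=\sum_{n'>T/q_1}\left(\tfrac{d}{n'}\right)f\!\left(\tfrac{e_1n'}{N}\right),
\]
where, since $f$ is supported in $[1,2]$, the term $\Sigma_g(e_1)$ vanishes unless $e_1<2Nq_1/T$; in particular the number of contributing $e$ is $\ll\sum_{g\mid q}N(q/g)/T=N\sigma(q)/T\ll q^{1+o(1)}N/T$. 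Everything thus reduces to estimating the Jacobi-symbol sums $\Sigma_g(e_1)$, i.e.\ sums of the non-principal character $(\tfrac{d}{\cdot})$ modulo $d$ against a weight with a sharp lower cutoff.

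For each $\Sigma_g(e_1)$ I would replace the sharp cutoff $\mathbf 1_{n'>T/q_1}$ by a smooth version at multiplicative scale $\nu$ (a fixed smooth $\psi$ equal to $1$ on $[1,\infty)$ and $0$ on $(-\infty,0]$, evaluated at $\tfrac{n'q_1-(1-\nu)T}{\nu T}$) and collect the error. The error is supported on $n'\in((1-\nu)T/q_1,T/q_1]$ and, for it to be nonzero, one needs $e_1\asymp Nq_1/T$; counting the pairs $(e_1,n')$ with $e_1n'\asymp N$ in that window gives $\ll\nu N$ for each $g\mid q$, hence $\ll\nu N\,\tau(q)=\nu N^{1+o(1)}$ in total --- this is the second term. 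For the smoothed piece the weight $W(n')=f(e_1n'/N)\psi(\cdots)$ has all derivatives $\ll\beta^{-j}$ with $\beta=\min(\delta N/e_1,\nu T/q_1)\gg\nu T/q_1$ on the range where $\Sigma_g(e_1)\neq0$ (this is where the hypothesis $\nu<\delta$ enters); applying Poisson summation modulo $d$, the zero frequency carries the factor $\sum_{a\bmod d}(\tfrac{d}{a})=0$ and drops out, while the $h$-th frequency carries a Gauss sum of modulus exactly $d^{1/2}$ together with $\widehat W(h/d)\ll (N/e_1)(1+\beta|h|/d)^{-A}$. Summing the resulting geometric-type series over $h$ gives $\Sigma_g^{\mathrm{sm}}(e_1)\ll d^{1/2}(N/e_1)/\beta\ll d^{1/2}Nq_1/(\nu Te_1)$, and then summing over $e_1<2Nq_1/T$ and over $g\mid q$ (using $\sum_{g\mid q}q/g=\sigma(q)\ll q^{1+o(1)}$) recovers the first term $\ll\nu^{-1}\tfrac{qN}{T}d^{1/2}$, up to the usual $N^{o(1)}$.

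The load-bearing step is the first move: recognising that summing $(\tfrac{d}{c})$ over the \emph{long} divisors $c>T$ of $qn$ is the same as summing the completely multiplicative $(\tfrac{d}{n'})$ over the \emph{short} complementary variable. This is what simultaneously (i) converts the inner sum into a genuine character sum, so that the zero frequency vanishes because $(\tfrac{d}{\cdot})$ is non-principal, and (ii) keeps the total length of the outer $e$-sum down to $\asymp qN/T$, which is responsible for the factor $q/T$. After that the work is bookkeeping: disentangling $\gcd(e,q)$ (costing a harmless $\sigma(q)$) and balancing the smoothing scale, the sharp cutoff forcing the split into a Poisson contribution of size $\nu^{-1}qNd^{1/2}/T$ and a boundary of size $\nu N^{1+o(1)}$. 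I note that one could instead bound $\Sigma_g(e_1)\ll d^{1/2}\log d$ directly by P\'olya--Vinogradov, since the sharp weight still has bounded variation; that already gives $\ll qNd^{1/2+o(1)}/T$, and the smoothing-plus-Poisson route is what puts the estimate in the $\nu$-parametrised form with the clean $d^{1/2}$ stated above.
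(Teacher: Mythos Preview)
Your proof is correct and follows essentially the same route as the paper: pass to the short complementary divisor, smooth the sharp cutoff at scale $\nu$, then bound the resulting character sum by Poisson summation modulo $d$ (the paper phrases this last step as ``a variant of the P\'olya--Vinogradov bound''). The one place you lose a little is in the final summation: by using only $\beta\gg\nu T/q_1$ you pick up a stray $N^{o(1)}$ on the first term. The paper avoids this by noting that for $e_1\le Nq_1/T$ the cutoff $\psi$ is identically $1$ on the support of $f(e_1\,\cdot/N)$, so the effective smoothness scale is $\delta N/e_1$ and $L/\beta\le\delta^{-1}\le\nu^{-1}$; for $Nq_1/T<e_1\le 2Nq_1/T$ one has $L/\beta=Nq_1/(\nu T e_1)<\nu^{-1}$ directly. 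Hence every $c=ge_1$ contributes $\ll\nu^{-1}d^{1/2}$, and since the pairs $(g,e_1)$ are in bijection with $c\le 2qN/T$, summing gives the stated first term without the extra $N^{o(1)}$.
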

\begin{proof}
By Dirichlet divisor switching we get
    \begin{align*}
       \lambda^\flat_{\chi\xi^\ell}(m,T)  = \sum_{\substack{c|m \\ c > T}} (\tfrac{d}{c}) =  \sum_{\substack{c|m \\ c \leq m/T}}  (\tfrac{d}{m/c}).
    \end{align*}
    Approximating $c \leq m/T$ by a smooth function $f_\nu(\tfrac{m}{cT})$ with  $f_\nu^{(J)} \ll_J\nu^{-J}$, we get an error term $\nu N^{1+o(1)}$ by the divisor bound. It then suffices to show that for any $c_1 = \frac{c}{\gcd(c,q)} \leq 2qN/T$  we have
    \begin{align*}
         \sum_{n \equiv 0 \pmod{c_1}} f(\tfrac{n}{N})f_\nu(\tfrac{qn}{cT}) (\tfrac{d}{n/c_1}) =   \sum_{n } f(\tfrac{nc_1}{N})f_\nu(\tfrac{qnc_1}{cT}) (\tfrac{d}{n})  \ll \nu^{-1} d^{1/2}.
    \end{align*}
    This is a variant of the P\'olya-Vinogradov bound, and it follows by Poisson summation (Lemma \ref{le:poisson}) and the bound $d^{1/2}$ for the resulting Gauss sums.
\end{proof}

We need the following lemma, which gives a cheap but flexible version of the fundamental lemma of the sieve along arithmetic progressions.
\begin{lemma}  \label{le:cheapFLsieve}
Let $\delta >0$ and  let $f$ be a smooth function supported in $[1,2]$ with $f^{(J)} \ll_J\delta^{-J}$ for all $J\geq 0$. Let $N,X>1$ and let $q \in \Z_{>0}$. Suppose that for some small $\eta > 0$
\begin{align*}
    X^{\eta}\max\{ 1,\delta^{-1} q \} < N < X^{1/\eta}.
\end{align*}
Let $W:= X^{(\log\log X)^{-2}}$, $W_1:= X^{\eta^3}$, and define the normalized sieve weights 
    \begin{align*}
  \lambda^W_d &:= \frac{P(W)}{ \varphi(P(W))} \mu(d) \mathbf{1}_{d \leq W_1} \mathbf{1}_{d | P(W)}, \quad \quad
  \theta_n^W := \sum_{d| n} \lambda^W_d 
\end{align*}
Then for any $a \in \Z$ and for any $C>0$
\begin{align*}
    \sum_{n \equiv a \pmod{q}} f(\tfrac{n}{N})  \theta_n^W = &\frac{\mathbf{1}_{\gcd(a,q)=1}}{\varphi(q)} N \widehat{f}(0)    + 
 O_{\eta,C}\Bigl(\frac{N}{q}\widehat{f}(0) \Bigl( \frac{\gcd(a,q,P(W))^{1/10} }{(\log X)^{C}} +\frac{\gcd(a,q)^{1/10} }{W^{1/10}}\Bigr) \Bigr).  
\end{align*}
\end{lemma}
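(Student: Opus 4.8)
The plan is to expand the sieve weight, reduce to counting solutions of $n\equiv a\pmod q$ in residue classes modulo $dq$, and then estimate the resulting sum over $d\mid P(W)$, $d\le W_1$, via a Mertens-type bound combined with the fundamental-lemma savings. First I would write $\theta_n^W=\sum_{d\mid n}\lambda_d^W$ and interchange the order of summation, so that
\begin{align*}
\sum_{n\equiv a\,(q)}f(\tfrac nN)\theta_n^W=\sum_{\substack{d\le W_1\\ d\mid P(W)}}\frac{P(W)}{\varphi(P(W))}\mu(d)\sum_{\substack{n\equiv a\,(q)\\ d\mid n}}f(\tfrac nN).
\end{align*}
The inner sum is empty unless $\gcd(d,q)\mid a$; writing $d=d_0 d_1$ with $d_0=\gcd(d,q)$, and using the Chinese Remainder Theorem to combine the congruences $n\equiv a\pmod q$ and $n\equiv 0\pmod{d_1}$ into a single class modulo $q d_1$, the inner sum equals $\frac{N}{qd_1}\widehat f(0)+O_C(N^{-C})$ by the trivial form of Poisson summation, valid because $qd_1\le q W_1\le N^{1-\eta^2/2}$ (after using $\delta^{-1}q<N^{1-\eta}$ and $W_1=X^{\eta^3}$, one checks $\delta^{-1}qd_1\le N^{1-\eta/2}$, say, so the error is negligible; here the smoothness of $f$ with $f^{(J)}\ll\delta^{-J}$ is what makes the tail $O_C(N^{-C})$). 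Summing the error over $d\le W_1$ contributes $O_C(W_1 N^{-C})$, which is absorbed.

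Next I would collect the main contribution. It equals
\begin{align*}
\frac{N\widehat f(0)}{q}\,\frac{P(W)}{\varphi(P(W))}\sum_{\substack{d\le W_1,\ d\mid P(W)\\ \gcd(d,q)\mid a}}\frac{\mu(d)}{d/\gcd(d,q)}.
\end{align*}
Splitting over $d_0=\gcd(d,q)$ (so $d_0\mid\gcd(a,q,P(W))$ and $d_0$ squarefree) and $d_1$ ranging over squarefree divisors of $P(W)$ coprime to $q$ with $d_1\le W_1/d_0$, the idea is that if the constraint $d_1\le W_1/d_0$ were dropped we would get the complete Euler product $\prod_{p\mid P(W),\,p\nmid q}(1-1/p)=\frac{\varphi(P(W))}{P(W)}\cdot\frac{q}{\varphi(q)}\cdot\frac{\gcd(q,P(W))}{\varphi(\gcd(q,P(W)))}^{-1}$ — more precisely the product over $p\mid P(W)$, $p\nmid q$ — and the sum over $d_0$ telescopes to $\frac{1}{\varphi(d_0)}$-type factors that reassemble $\frac{q}{\varphi(q)}\cdot\frac{\mathbf 1_{\gcd(a,q)=1}}{\text{(stuff)}}$. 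Carrying this out, the unrestricted sum produces exactly the main term $\frac{\mathbf 1_{\gcd(a,q)=1}}{\varphi(q)}N\widehat f(0)$ after the factor $\frac{P(W)}{\varphi(P(W))}$ cancels the Euler product and the $d_0$-sum rebuilds $q/\varphi(q)$; I would verify this identity by a direct multiplicativity check.

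The remaining task — and the main obstacle — is to bound the \emph{tail} $\sum_{d_1>W_1/d_0}\mu(d_1)/d_1$ over squarefree $W$-smooth $d_1$ coprime to $q$. This is where the fundamental-lemma savings of type $1/(\log X)^C$ and $1/W^{1/10}$ come from. The standard tool is a Rankin-type / Buchstab estimate: for the truncated Möbius sum over smooth squarefree numbers one has, by the fundamental lemma of the linear sieve (Brun or Selberg, e.g. as in Halberstam–Richert or Friedlander–Iwaniec), a bound of shape $\ll\exp(-u\log u)$ with $u=\frac{\log W_1}{\log W}=\frac{\eta^3\log X}{\log X/(\log\log X)^2}=\eta^3(\log\log X)^2$, which dwarfs any fixed power of $\log X$; and a crude Rankin bound $\sum_{d_1>W_1/d_0}1/d_1^{1-\kappa}$ with a tiny $\kappa$ handles the regime where $W_1/d_0$ is not much bigger than $1$ (i.e. $d_0$ large), giving the $\gcd(a,q)^{1/10}/W^{1/10}$ term by optimizing $\kappa$ against $d_0\le\gcd(a,q)$. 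The factor $\gcd(a,q,P(W))^{1/10}$ (resp.\ $\gcd(a,q)^{1/10}$) enters because the sum over $d_0$ of $1/\varphi(d_0)$ times the tail bound for $d_1\le W_1/d_0$ is maximized at $d_0$ as large as allowed, and a Rankin shift by $1/10$ converts the $d_0$-dependence into $d_0^{1/10}$. The delicate point is keeping the error genuinely of size $\frac Nq\widehat f(0)$ times these savings (not $\frac{N}{\varphi(q)}$), which requires being careful that the tail estimate is uniform in $q$ and does not secretly lose a factor $q/\varphi(q)\asymp\log\log q$ — this is controlled because $q<N^{1-\eta}$ forces $\log\log q\ll\log\log X$, easily absorbed into the $W^{1/10}$ or $(\log X)^{-C}$ gains after relabelling $C$. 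Assembling these pieces yields the claimed formula.
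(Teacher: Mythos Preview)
Your overall strategy matches the paper's: apply Poisson summation to get $\frac{N\widehat f(0)}{q}\sum_{d}\lambda_d^W\frac{\gcd(d,q)}{d}$, then split into the full sum over $d\mid P(W)$ and a tail over $d>W_1$, and bound the tail by smooth-number estimates. The tail treatment via fundamental-lemma/Rankin savings is essentially what the paper does (it cites a standard bound for large $W$-smooth numbers).

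There is, however, a real gap in your main-term computation. You claim that the unrestricted sum over $d_0,d_1$ produces \emph{exactly} $\frac{\mathbf 1_{\gcd(a,q)=1}}{\varphi(q)}N\widehat f(0)$, with the $d_0$-sum ``rebuilding $q/\varphi(q)$'' via ``$1/\varphi(d_0)$-type factors''. This is not what happens. The sum over $d_0\mid\gcd(a,q,P(W))$ of $\mu(d_0)$ is simply $\mathbf 1_{\gcd(a,q,P(W))=1}$, with no $1/\varphi(d_0)$ anywhere; and the remaining Euler product $\frac{P(W)}{\varphi(P(W))}\prod_{p<W,\,p\nmid q}(1-1/p)$ equals $\frac{\gcd(q,P(W))}{\varphi(\gcd(q,P(W)))}$, which is only \emph{approximately} $q/\varphi(q)$. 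So the unrestricted sum gives
\[
\mathbf 1_{\gcd(a,q,P(W))=1}\cdot\frac{\gcd(q,P(W))}{\varphi(\gcd(q,P(W)))}\cdot\frac{N\widehat f(0)}{q},
\]
not the stated main term. Two discrepancies must be handled separately: (i) replacing $\gcd(q,P(W))/\varphi(\gcd(q,P(W)))$ by $q/\varphi(q)$ costs $O_C((\log X)^{-C})$ since the missing primes are all $\ge W$; and (ii) replacing $\mathbf 1_{\gcd(a,q,P(W))=1}$ by $\mathbf 1_{\gcd(a,q)=1}$ is only free when $\gcd(a,q)<W$, otherwise one absorbs the whole main term into the error via $\mathbf 1_{\gcd(a,q)\ge W}\le W^{-1/10}\gcd(a,q)^{1/10}$. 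This second point is precisely where the $\gcd(a,q)^{1/10}/W^{1/10}$ term in the statement originates --- not from the tail Rankin bound as you suggest. Once you carry out the ``direct multiplicativity check'' you propose, you will see this; the fix is easy, but as written the proposal misidentifies the source of the second error term and asserts an identity that is false.
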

\begin{proof}
    By Lemma \ref{le:poisson} we have
    \begin{align*}
         \sum_{n \equiv a \pmod{q}} f(\tfrac{n}{N})  \theta_n^W =  N \widehat{f}(0) \frac{1}{q}\sum_{\substack{d \\ \gcd(d,q) | \gcd(a,q) }} \lambda_d^{W} \frac{\gcd(d,q)}{d}  + O_\eta(X^{-100}).
    \end{align*}
Here
    \begin{align} \label{eq:twosumsW}
    \begin{split}
        \sum_{\substack{d \\ \gcd(d,q)|\gcd(a,q) }} \lambda_d^{W} \frac{\gcd(d,q)}{d} = &\frac{P(W)}{\varphi(W)}\sum_{\substack{d |P(W) \\ \gcd(d,q)|\gcd(a,q) }}  \mu(d) \frac{\gcd(d,q)}{d} \\
 &+ O \bigg(\frac{P(W)}{\varphi(W)}\sum_{\substack{d|P(W)   \\ \gcd(d,q) | \gcd(a,q) \\d > W_1}}\frac{\gcd(d,q)}{d} \bigg).
    \end{split}
    \end{align}
   The first term in \eqref{eq:twosumsW} vanishes unless $\gcd(a,q,P(W))=1$, in which case it is equal to
    \begin{align*}
        \frac{P(W)}{\varphi(W)}\sum_{\substack{d|P(W)  \\ (d,q)=1}} \mu(d) \frac{1}{d}  = \frac{\gcd(q,P(W))}{\varphi(\gcd(q,P(W)))} = \frac{q}{\varphi(q)} (1+ O_C((\log X)^{-C}),
    \end{align*}
which gives the main term. Note that $\mathbf{1}_{\gcd(a,q,P(W))=1} =\mathbf{1}_{\gcd(a,q)=1} $ unless $\gcd(a,q) \geq W$, in which case we can use the upper bound $\mathbf{1}_{\gcd(a,q) \geq W} \leq W^{-1/10}\gcd(a,q)^{1/10}$ to absorb the main term to the error term. 

For $\gcd(a,q,P(W)) \geq W$ the second term in \eqref{eq:twosumsW} is bounded trivially by 
\begin{align*}
    \mathbf{1}_{\gcd(a,q,P(W)) \geq W} d(\gcd(a,q,P(W))(\log X)^2 \leq  W^{-1/20} (\log X)^2 \gcd(a,q,P(W))^{1/10}.
\end{align*} For $\gcd(a,q,P(W,P(W)))< W$ we have $e=\gcd(d,q) < W$, and the second term in \eqref{eq:twosumsW} is bounded by a bound for large smooth numbers (for instance, \cite[Lemma 9]{merikoskipolyprimes})
    \begin{align*}
       \ll  (\log X) \sum_{\substack{e|\gcd(a,q) \\e \leq W}} e
 \sum_{\substack{d|P(W) \\ e|d\\ d > W_1}}\frac{1}{d} \ll_C d(\gcd(a,q,P(W))) (\log X)^{-C}.
    \end{align*}
\end{proof}
When applying Lemma \ref{le:cheapFLsieve},
we bound the error term  using the following simple estimates, which hold for any $P_0\in\{0,P(W)\}$ and $D,Y,n_1,n_2\in (1,X]$ with $\gcd(n_1,n_2)=1$
\begin{align} \label{eq:gcdbound1}
\begin{split}
      \sum_{d_1 \sim D}  \sum_{y_1,y_2 \sim Y} \sum_{\substack{x_1,x_2,u \pmod{d_1y_1^3  n_2 } \\  b n_1 u_2 y_2^3 \equiv a (n_2 x_1^2 - n_1 x_2^2)\pmod{d_1 y_1^3 n_2} }} \frac{\gcd(n_2 x_1^2-n_1 x_2^2,d_1 y_1^3 n_2,P_0)^{1/10}}{( d_1 y_1^3 n_2)^2}   \\
    \ll  D Y^2 (\log X)^{O(1)}   d(\gcd(n_2,P_0))^{O(1)}, \hspace{-50pt}
\end{split} \\ \label{eq:gcdbound2}
\begin{split}
     \sum_{c \sim D}  \sum_{y_1,y_2 \sim Y} \sum_{\substack{u_1,u_2 \pmod{c} \\ b n_2 y_1^3 u_1  \equiv b n_1 y_2^3 u_2  \pmod{c} }} \frac{\gcd(y_1^3 u_1,c)^{1/10}\gcd(y_2^3 u_2,c)^{1/10}}{c}  
    \ll  D Y^2  (\log X)^{O(1)} .
\end{split}
\end{align} 
In the proofs we will also need the divisor bound for rough divisors of $n \leq X$ with $W$ as in Lemma \ref{le:cheapFLsieve}
\begin{align} \label{eq:roughdivisorbound}
    \#\{d| n: \, \gcd(d,P(W))=1 \} \leq  2^{\log n / \log W} \leq W^{o(1)}.
\end{align}

We will need the following truncated approximation to $L(1, (\tfrac{4d}{\cdot}))^{-1}$ on average over the moduli $d=n_1 n_2$ for the proof of Proposition \ref{prop:typeII}.
 \begin{lemma} \label{le:Linverse}
     We have  for any $K,D \geq 1$
     \begin{align*}
         \sum_{\substack{d \leq D \\ \text{\emph{square-free and odd }}}} \bigg| \frac{1}{L(1, (\tfrac{4d}{\cdot}))} - \sum_{k \leq K} \frac{\mu(k)(\tfrac{4d}{k})}{k} \bigg|^2 \ll \frac{D^{1+o(1)} }{K} + D^{o(1)}.
     \end{align*}
 \end{lemma}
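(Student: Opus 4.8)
The plan is to expand the square, integrate over $d$ (square-free and odd, $d\le D$), and reduce matters to mean-value estimates for quadratic characters that are standard applications of Heath-Brown's large sieve for real characters \cite{hbquad}. First I would open the Dirichlet series for $L(1,(\tfrac{4d}{\cdot}))^{-1}=\sum_{m}\mu(m)(\tfrac{4d}{m})m^{-1}$, which converges absolutely once we truncate it at, say, $m\le D^{10}$ with an admissible error $O(D^{-8})$ (using $(\tfrac{4d}{m})\ll 1$). Then the quantity inside the square becomes $\sum_{K<m\le D^{10}}\mu(m)(\tfrac{4d}{m})m^{-1}+O(D^{-8})$, and expanding the square produces a double sum over $K<m_1,m_2\le D^{10}$ of $\tfrac{\mu(m_1)\mu(m_2)}{m_1m_2}\sum_{d}(\tfrac{4d}{m_1m_2})$ where the $d$-sum is over square-free odd $d\le D$. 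Writing $m_1m_2=\square\cdot m_0$ with $m_0$ squarefree (the squarefull part contributes $(\tfrac{4d}{\square})=\mathbf 1$ on the relevant $d$), the character $d\mapsto(\tfrac{4d}{m_0})$ is a non-principal real character of modulus $O(m_0)$ whenever $m_0>1$, so the $d$-sum over such $d$ is $\ll D^{1/2}m_0^{o(1)}$ by Pólya–Vinogradov (or trivially $\ll D$ when $m_0=1$).

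The key point is that the off-diagonal terms, i.e.\ those with $m_0>1$, contribute
\[
\ll D^{1/2}\sum_{K<m_1,m_2\le D^{10}}\frac{m_0^{o(1)}}{m_1m_2}\ll D^{1/2+o(1)}(\log D)^2\ll D^{1/2+o(1)},
\]
which is absorbed by the claimed bound $D^{o(1)}$ together with $D^{1+o(1)}/K$; the bound $D^{1/2+o(1)}$ is even smaller, so this is comfortable. The diagonal $m_0=1$, which forces $m_1m_2$ to be a perfect square, gives
\[
\ll D\sum_{\substack{K<m_1,m_2\le D^{10}\\ m_1m_2=\square}}\frac{1}{m_1m_2}\ll D\sum_{m>K}\frac{d(m)^{2}}{m^{2}}\ll \frac{D^{1+o(1)}}{K},
\]
using that $m_1m_2=\square$ with $m_1m_2\ge K^2$ forces the common value to be $\ge K$, and that for each such square value $k$ there are $\ll d(k)$ factorizations $k=m_1m_2$ with $m_1,m_2$ constrained; here $\sum_{k>K}d(k)^{O(1)}k^{-2}\ll K^{-1+o(1)}$. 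Summing the two contributions gives $\ll D^{1+o(1)}/K + D^{o(1)}$, as required.

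Alternatively, and perhaps more cleanly, one can avoid the elementary Pólya–Vinogradov step entirely and instead bound $\sum_{d\le D}^{\flat}\big|\sum_{K<m\le D^{10}}\tfrac{\mu(m)(\tfrac{4d}{m})}{m}\big|^2$ directly by opening the square and invoking the large sieve inequality of Heath-Brown \cite{hbquad}, $\sum_{d\le D}^{\flat}\big|\sum_{n\le M}a_n(\tfrac{4d}{n})\big|^2\ll (D+M)(DM)^{o(1)}\sum_{n\le M}|a_n|^2$, applied dyadically in the ranges $M\sim 2^j\in(K,D^{10}]$ with $a_n=\mu(n)\mathbf 1_{n\sim M}/n$; the contribution of a dyadic block is $\ll (D+M)(DM)^{o(1)}M^{-1}\ll D^{o(1)}(D/M+1)$, and summing over $M>K$ dyadically gives exactly $D^{1+o(1)}/K+D^{o(1)}$. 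The only mild technical obstacle is the usual one in applications of the quadratic large sieve: the Jacobi symbol $(\tfrac{4d}{n})$ is only a character in $d$ after separating the square part of $n$ and restricting to odd squarefree $d$, so one must split $n=a b^2$ with $a$ squarefree, note $(\tfrac{4d}{n})=(\tfrac{4d}{a})$ for $\gcd(d,b)$-free $d$, absorb the coprimality conditions by Möbius, and then apply the large sieve to the squarefree modulus $a$; the resulting extra divisor-type factors are all $D^{o(1)}$. Everything else is bookkeeping.
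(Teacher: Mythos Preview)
Your proof has a genuine gap at the very first step: the truncation of the Dirichlet series at $m\le D^{10}$ with claimed error $O(D^{-8})$ is false. The series $\sum_m \mu(m)(\tfrac{4d}{m})m^{-1}=L(1,(\tfrac{4d}{\cdot}))^{-1}$ converges only \emph{conditionally}; using the trivial bound $|(\tfrac{4d}{m})|\le 1$ as you suggest gives the divergent tail $\sum_{m>D^{10}}m^{-1}$. Even invoking Siegel--Walfisz for $\sum_{m\le N}\mu(m)(\tfrac{4d}{m})$ only yields a tail $\ll_A (\log D^{10})^{-A}$, so after squaring and summing over $d\le D$ you get $\gg D(\log D)^{-2A}$, which is \emph{not} bounded by $D^{1+o(1)}/K+D^{o(1)}$ once $K$ is large (say $K\ge D$). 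This is precisely why the paper splits into three ranges for the dyadic parameter $K_1$: Heath-Brown's large sieve handles $K_1\le D^3$, a duality/Poisson argument handles $D^3<K_1\le e^{D^\eps}$, and Siegel--Walfisz is reserved for $K_1>e^{D^\eps}$, where the saving $(\log K_1)^{-C}\le D^{-\eps C}$ is a genuine power of $D$ and the dyadic sum converges to $O_\eps(1)$.

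There is a second, smaller issue in your first approach: the off-diagonal contribution you bound by $D^{1/2+o(1)}$ is \emph{not} ``even smaller'' than $D^{o(1)}$, and it is not absorbed by $D^{1+o(1)}/K$ either once $K>D^{1/2}$. (Also, P\'olya--Vinogradov applied to the $d$-sum gives $\ll m_0^{1/2+o(1)}$, not $D^{1/2}m_0^{o(1)}$, and $m_0$ can be as large as $D^{20}$.) Your alternative large-sieve approach is essentially what the paper does in the range $K_1\le D^3$, and it would succeed there; but it cannot be pushed to arbitrarily large $K_1$ because the $(DK_1)^{o(1)}$ factor in Heath-Brown's bound diverges when summed dyadically to infinity. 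The missing ingredient is a separate treatment of the very long tail.
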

\begin{proof}
   Plugging in
    \begin{align*}
        \frac{1}{L(1, (\tfrac{4d}{\cdot}))} = \sum_{k} \frac{\mu(k)(\tfrac{4d}{k})}{k},
    \end{align*}
  applying quadratic reciprocity, a dyadic partition, and Cauchy-schwarz, it suffices to bound
  \begin{align*}
      \sum_{K_1=2^j \geq K/2} (\log K_1)^2 \sum_{\substack{d \leq D \\ \text{square-free and odd}}} \bigg|\sum_{k \sim K_1} \frac{\mu(k)(\tfrac{k}{d})}{k} \bigg|^2.
  \end{align*}
We split into three cases, $K_1 \in [K/2,D^3]$, $K_1 \in (D^3,e^{D^\eps}]$, and $K_1 > e^{D^{\eps}}$.

By the Siegel-Walfisz theorem terms where $K_1 > e^{D^\eps}$ contribute
\begin{align*}
    \ll_C  \sum_{K_1=2^j > e^{D^\eps}} \frac{D}{(\log K_1)^C}\ll_\eps 1
\end{align*}
by taking $C$ sufficiently large in terms of $\eps$.

For $K_1 \leq D^3$ applying Heath-Brown's large sieve for quadratic characters \cite[Theorem 1]{hbquad} we get
    \begin{align*}
        \sum_{K/2 \leq K_1=2^j \leq D^3 } (\log K_1)^2 \sum_{\substack{d \leq D \\ \text{square-free and odd}}} \bigg|\sum_{k \sim K_1} \frac{\mu(k)(\tfrac{k}{d})}{k} \bigg|^2 &\ll    \sum_{K/2 \leq K_1=2^j \leq D^3 }(DK_1)^{1+o(1)} (\frac{D}{K_1} + 1)  \\
        &\ll \frac{D^{1+o(1)} }{K} + D^{o(1)}.
    \end{align*}
  
For $D^3 \leq K_1 \leq e^{D^{\eps}}$, it suffices to show that for any $\beta_d$ we have
\begin{align} \label{eq:dualclaim}
   \sum_{k \sim K_1}  \bigg|  \sum_{\substack{d \leq D \\ \text{square-free and odd}}} \beta_d (\tfrac{k}{d}) \bigg|^2 \ll  K_1 \|\beta\|_2^2,
\end{align}
since  then by the duality principle (see \cite[Chapter 7.1]{IKbook}, for instance)
\begin{align*}
     \sum_{D^3 < K_1=2^j \leq e^{D^{\eps}} } (\log K_1)^2 \sum_{\substack{d \leq D \\ \text{square-free and odd}}} \bigg|\sum_{k \sim K_1} \frac{\mu(k)(\tfrac{k}{d})}{k} \bigg|^2 &\ll       \sum_{D^3 < K_1=2^j \leq e^{D^{\eps}} } (\log K_1)^2 \ll_\eps D^{3\eps}.
\end{align*}
To show \eqref{eq:dualclaim}, inserting a smooth majorant $F$ for $[1,2]$ and expanding the square the left-hand side is bounded by
\begin{align*}
  \ll    \sum_{\substack{d_1,d_2 \leq D  \\ \text{square-free and odd} } }  \beta_{d_1} \overline{\beta_{d_2}} \sum_{k} F(\tfrac{k}{K_1})  (\tfrac{k}{d_1 d_2})  \ll K_1 \sum_{\substack{d \leq D  \\ \text{square-free and odd}  } }  |\beta_{d}|^2,
\end{align*}
since  by Poisson summation (Lemma \ref{le:poisson}) 
\begin{align*}
    \sum_{k} F(\tfrac{k}{K_1})  (\tfrac{k}{d_1 d_2})  \ll D^{-100} + \frac{K_1}{d_1d_2}\sum_{ k \pmod{d_1d_2}}  (\tfrac{k}{d_1 d_2}) \ll  D^{-100} +K_1 \mathbf{1}_{d_1=d_2}.
\end{align*}
\end{proof}

\section{Set-up and arithmetic information}
We let $\delta = \delta(X):= (\log X)^{-c}$ for some fixed large $c >0$. Let $f,f_1,f_2$ denote non-negative non-zero smooth functions supported in $[1,1+\delta]$ and satisfying the derivative bounds $ f^{(J)}, f_1^{(J)}, f_2^{(J)}\ll_J \delta^{-J}$ for all $J \geq 0$.  For  $A \in (\delta X^{1/2}, X^{1/2}]$ and $B\in (\delta X^{1/3}, X^{1/3}]$ we define the sequences $\mathcal{A}=(a_n)$, $\mathcal{B}=(b_n)$, and their difference $\mathcal{W}=(w_n)$ by 
\begin{align*}
    a_n &:= a_n(a,b,f_1,f_2,A,B) = \sum_{n=a x^2+b y^3} f_1(\tfrac{x}{A}) f_2(\tfrac{y}{B}), \\
    b_n &:=  b_n(a,b,f,f_1,f_2,A,B,W)=  f(\tfrac{n}{X}) \frac{AB \widehat{f_1}(0)\widehat{f_2}(0)  }{X\widehat{f}(0)  } \\
    w_n &:= a_n-b_n
\end{align*}
Theorem \ref{thm:asymp}  is an immediate corollary, via finer-than-dyadic decomposition and the Prime number theorem, of the following smoothed version which is proved in Section \ref{sec:sieve}.
\begin{theorem} \label{thm:technicalasymp}
 Let $a,b >0$ be coprime integers. Let $\eps > 0$ and suppose that Conjecture $\mathrm{C}_a(\eps)$ holds. Let $A \in (\delta X^{1/2}, X^{1/2}]$ and $B\in (\delta X^{1/3}, X^{1/3}]$. Then 
    \begin{align*}
        \sum_{n} \Lambda(n) w_n &\ll \eps AB \widehat{f_1}(0)\widehat{f_2}(0).
    \end{align*}
\end{theorem}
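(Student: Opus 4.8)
### Proof proposal for Theorem \ref{thm:technicalasymp}

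\textbf{Overall strategy.} The plan is to run a standard prime-detecting sieve (a combinatorial decomposition of $\Lambda$, e.g. Heath-Brown's identity of bounded level) on the difference sequence $\mathcal{W} = (w_n)$, so that the problem reduces to showing that $w_n$ is well-distributed in the three types of sums described in the overview: Type I (linear) sums with level $D_1 < X^{5/9}$, Type I$_2$ (divisor) sums with level $D_2 < X^{1/4}$, and Type II (bilinear) sums in the range $X^{1/6} < N < X^{1/3 - 2\eps/3}$. The arithmetic content is entirely in establishing these three estimates; once they are in hand, a purely combinatorial check (as in the Friedlander--Iwaniec/Heath-Brown framework) verifies that any monomial $\Lambda(n)$-piece $a x^2 + b y^3 \sim X$ decomposes into pieces covered by one of these three ranges. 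I will carry out the decomposition so that $\sum_n \Lambda(n) w_n$ is split into $O((\log X)^{O(1)})$ sums, each either of Type I with $d \le D_1$, of Type I$_2$ with $d \le D_2$, or of Type II with $N$ in the admissible window, plus terms with all variables $\le X^{\eps}$ or so that are handled trivially by the divisor bound.

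\textbf{Type I and Type I$_2$ sums.} For the Type I sums $\sum_{d \le D_1} \alpha_d \sum_n a_{dn}$ (and the analogous statement for $b_{dn}$, whose main term must match), I would open $a_{dn}$ as a sum over $x, y$ with $a x^2 + b y^3 \equiv 0 \pmod{d}$, apply the truncated Poisson summation formula (Lemma \ref{le:poisson}) in both $x$ and $y$, extract the main term from the zero frequencies, and bound the dual sums using the Weil bound for the resulting complete exponential sums (mixed quadratic/cubic Gauss-type sums modulo $d$). The error terms \eqref{eq:gcdbound1}--\eqref{eq:gcdbound2} together with the rough-divisor bound \eqref{eq:roughdivisorbound} control the contribution of the sieve weights $\theta^W_n$; the range $D_1 < X^{5/9}$ is exactly what makes the dual sums shorter than trivial. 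The Type I$_2$ sums are deferred: by the remark in the overview they are established in the companion work \cite{GMquadratic} via spectral methods for $\GL(2)$ and the averages-over-orbits technique of \cite{GM}, giving the range $D_2 < X^{1/4}$, which I invoke as a black box.

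\textbf{Type II sums --- the main obstacle.} This is where Conjecture $\mathrm{C}_a(\eps)$ enters and where the real difficulty lies. After Cauchy--Schwarz in the $m$-variable and rearrangement, the Type II sum reduces to $\sum_{n_1, n_2 \sim N} \beta_{n_1}\beta_{n_2} \sum_k \Upsilon_{n_1,n_2}(k)\, \mathcal{Q}_{n_1,n_2}(k)$ with $d = 4 n_1 n_2$; I apply Lemma \ref{le:binaryrep} (and the angular variant Lemma \ref{le:binaryrepangle} for the weights depending on the hyperbolic angle) to expand $\mathcal{Q}_{n_1,n_2}(k)$ into Hecke eigenvalues $\lambda_{\chi\xi^\ell}(k)$ over $\chi \in \widehat{\CC(d)}$, $|\ell| \ll R_d$, weighted by $\check F(\ell, s)$, which by \eqref{eq:Fdecay} restricts to $|\ell| \ll \delta^{-1} R_d N^{o(1)}$. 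For the non-principal characters $\chi\xi^\ell \ne 1$, Conjecture $\mathrm{C}_a(\eps)$ bounds $\sum_k \Upsilon_{n_1,n_2}(k)\lambda_{\chi\xi^\ell}(k) \ll X^{1/3 + 2\eps/3}$ on average over the family, and the normalizing factor $1/(h(d) R_d) \approx 1/\sqrt{d} \asymp 1/N$ (via the class number formula \eqref{eq:classnumberformula}) plus the $\asymp N$ harmonics make this admissible precisely when $N < X^{1/3 - 2\eps/3}$; the diagonal $n_1 = n_2$ is harmless once $N > X^{1/6}$. The principal term $\chi\xi^\ell = 1$ contributes $\sum_{n_1,n_2} \frac{\beta_{n_1}\beta_{n_2}}{\sqrt{n_1 n_2}\, L(1,(\tfrac{4n_1n_2}{\cdot}))} \sum_k \Upsilon_{n_1,n_2}(k)\lambda_1(k)$; here I replace $\lambda_1$ by its truncated approximation $\lambda_1^\sharp(k,T)$ using the second inequality of Conjecture $\mathrm{C}_a(\eps)$ to discard $\lambda_1^\flat$, replace $L(1,(\tfrac{4n_1n_2}{\cdot}))^{-1}$ by the short Dirichlet polynomial $\sum_{k \le X^\eps}\mu(k)(\tfrac{4n_1n_2}{k})/k$ via Lemma \ref{le:Linverse}, evaluate the sum over $y_1, y_2$ by Poisson summation twice (producing the cubic congruence count \eqref{eq:cubicintro}), expand by cubic Dirichlet characters to split off the expected main term, and bound the non-principal contribution by the Baier--Young large sieve for sextic characters \cite{baieryoungcubic}. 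The hardest part by far is orchestrating the Type II argument so that all losses from the conductor $d \asymp N^2$, the regulator $R_d$, the truncation parameter $T$, and the smoothing parameter $\delta = (\log X)^{-c}$ are simultaneously absorbed into an acceptable $O(\eps)$-sized error; this is delicate because $R_d$ can be as small as $O(\log d)$ (forcing $\approx N$ harmonics and a genuine loss), and the main term from $\chi\xi^\ell = 1$ must be shown to exactly cancel against the corresponding main term coming from the comparison sequence $b_n$, which requires a careful matching of the arithmetic factors after the Poisson and character manipulations.
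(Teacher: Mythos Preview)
Your sketch of the arithmetic inputs (Propositions \ref{prop:typeI}, \ref{prop:typeII}, \ref{prop:typeI2}) and of the Type~II mechanism is essentially accurate, but the sieve-decomposition step contains a real gap that hides where the $O(\eps)$ actually comes from.

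You propose to run Heath-Brown's identity and assert that ``a purely combinatorial check \ldots\ verifies that any monomial $\Lambda(n)$-piece \ldots\ decomposes into pieces covered by one of these three ranges, plus terms with all variables $\le X^{\eps}$ or so that are handled trivially.'' This is not correct: the Type~I, Type~I$_2$, and Type~II ranges do \emph{not} cover everything, and the uncovered pieces are not the ones with all variables tiny. The genuinely bad configurations are, for instance, three factors each in $[X^{1/3-2\eps/3},X^{1/3}]$, or several factors clustered near $X^{1/6}$, so that no subproduct lands in the Type~II window $(X^{1/6+\eta},X^{1/3-2\eps/3-\eta})$ and the product is too large for Type~I or Type~I$_2$. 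The paper itself flags this (``We are missing arithmetic information for \ldots\ (i) three variables of size $X^{1/3+o(1)}$ and (ii) six variables of size $X^{1/6+o(1)}$''). These pieces are not trivial; they are bounded by an upper bound sieve, and the resulting contribution is $\ll \eps\, AB\,\widehat{f_1}(0)\widehat{f_2}(0)$ precisely because the bad set of sizes has logarithmic measure $O(\eps)$. This is the source of the $\eps$ in the theorem.

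The paper does not use Heath-Brown's identity here; it uses two Buchstab iterations with threshold $Z=X^{1/6-2\eps/3-\eta}$ to write $S(\mathcal{W},2X^{1/2}) = S_1 - S_2 - S_3 + S_4$, then a further Buchstab step on the two-prime piece $S_3$ (for which the sequence $\mathcal{W}^{(2)}$ is introduced so that Type~I$_2$ applies). The terms $S_1,S_2,S_5$ are handled by combining the fundamental lemma with Propositions \ref{prop:typeI}, \ref{prop:typeII}, \ref{prop:typeI2}; the uncovered ranges appear in $S_4$ and $S_6$ with a positive sign and are bounded by a crude upper bound sieve, giving the $O(\eps)$ loss. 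Your Heath-Brown route can also be made to work, but you must explicitly isolate and bound the boundary pieces by this $O(\eps)$ argument rather than dismissing them as trivial.

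A smaller point on the Type~II sketch: you write that the main term from $\chi\xi^\ell=1$ must be matched directly against the comparison sequence $b_n$. The paper instead inserts an intermediate ``random model'' sequence $a_n^{(2)}$ and proves $a_n^{(1)}\leftrightarrow a_n^{(2)}$ (Proposition \ref{prop:typeii1}, the hard part using the Hecke expansion and Conjecture $\mathrm{C}_a(\eps)$) and $a_n^{(2)}\leftrightarrow b_n$ (Proposition \ref{prop:typeii0}, a soft Siegel--Walfisz argument) separately. This device is what makes the main-term matching clean and avoids the issue that $a_n$ and $b_n$ do not agree modulo squares.
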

For the proof, we require three types of arithmetic information. Proposition \ref{prop:typeI} is proved in Section \ref{sec:typei} and Proposition \ref{prop:typeII} is proved in Sections \ref{sec:typeii} and \ref{sec:typeii1}.  Conjecture $\mathrm{C}_a(\eps)$ is required only for Proposition \ref{prop:typeII}, the other results are unconditional. The proof of Proposition \ref{prop:typeI2} will appear in \cite{GMquadratic}, where it can be done more economically as a corollary of more general considerations, via the spectral methods of $\operatorname{GL}(2)$ automorphic forms. Only for Proposition \ref{prop:typeI2} do we need that $a,b$ are positive, and it may be possible to relax this assumption using ideas from \cite{toth}. 
\begin{proposition}[Type I information up to $5/9$] \label{prop:typeI}
 Let $a,b \neq 0 $ be coprime integers. For any square-free $d \leq X^{5/9-\eta}$  we have
    \begin{align*}
        \sum_{n } w_{dn}  \ll  d^{-1} X^{5/6-\eta}. 
    \end{align*}
\end{proposition}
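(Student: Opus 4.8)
The plan is to open up the sequence $w_{dn} = a_{dn} - b_{dn}$, write the constraint $dn = ax^2+by^3$ as a congruence condition $ax^2+by^3 \equiv 0 \pmod d$ (after which $n$ is determined), and apply Poisson summation in both $x$ and $y$ to detect cancellation. Concretely, after summing over $n$ we are left with
\begin{align*}
\sum_n a_{dn} = \sum_{\substack{x,y \\ ax^2+by^3 \equiv 0 \,(d)}} f_1(\tfrac xA) f_2(\tfrac yB),
\end{align*}
and the main term $\sum_n b_{dn}$ should match the $h=k=0$ frequency below. First I would apply Lemma \ref{le:poisson} (truncated Poisson) to the $x$-variable modulo $d$ and then to the $y$-variable modulo $d$; because $A \asymp X^{1/2}$ and $B \asymp X^{1/3}$, the dual variables can be truncated at $H_x \asymp X^\eta d/A$ and $H_y \asymp X^\eta d/B$ respectively, up to a negligible error. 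The zero frequency $h=k=0$ contributes $\frac{AB}{d^2}\widehat{f_1}(0)\widehat{f_2}(0) \cdot \#\{(x,y) \bmod d : ax^2+by^3\equiv 0\}$, and one checks this cancels $\sum_n b_{dn}$ up to the acceptable error, so the whole content is in bounding the nonzero frequencies.

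For the nonzero frequencies we are left with something of the shape
\begin{align*}
\frac{AB}{d^2}\sum_{\substack{|h|\leq H_x, \, |k|\leq H_y \\ (h,k)\neq(0,0)}} \widehat{f_1}(\tfrac{hA}{d})\widehat{f_2}(\tfrac{kB}{d}) \, S_d(h,k), \qquad S_d(h,k) = \sum_{x,y \,(d)} e_d\big(h x + k y + t(ax^2+by^3)\big)\cdots
\end{align*}
where after detecting the congruence $ax^2+by^3\equiv 0\,(d)$ by an additive character $e_d(t\cdot)$ and summing over $t$, the inner object becomes a complete exponential sum in $x,y$ modulo $d$ attached to the polynomial $ax^2+by^3$ together with the linear twists $hx, ky$. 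The key input is the Weil bound: for $d$ prime (and then by multiplicativity, for squarefree $d$ coprime to $6ab$) the $x$-sum is a Gauss-type sum of size $\ll d^{1/2}$ and the $y$-sum, an exponential sum attached to a cubic, is $\ll d^{1/2}$ as well, so $S_d(h,k) \ll d \cdot d^{1/2} = d^{3/2}$ when the twist is nondegenerate, with the standard care for the finitely many bad primes dividing $6ab$ absorbed into $X^{o(1)}$ factors. Summing over the $\ll H_x H_y \asymp X^{2\eta} d^2/(AB)$ frequencies gives a bound $\ll \frac{AB}{d^2}\cdot X^{2\eta}\frac{d^2}{AB} \cdot d^{3/2} \cdot X^{o(1)} \cdot \frac{1}{d} = X^{O(\eta)} d^{1/2}$ for the $y$-part savings, but this is not yet $d^{-1}X^{5/6-\eta}$; the actual bookkeeping must retain the decay of $\widehat{f_1},\widehat{f_2}$ to restrict to the genuinely contributing ranges $|h| \lesssim d/A$, $|k|\lesssim d/B$ and exploit that $d \leq X^{5/9-\eta}$ precisely so that the resulting power of $X$ lands below $d^{-1}X^{5/6-\eta}$.

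The main obstacle is twofold. First, the Weil bound for the cubic-twisted sum $\sum_{y\,(d)} e_d(ky + t b y^3)$ degenerates when $d$ shares common factors with $6ab$ or when the cubic $by^3 + (k/t) y$ has a repeated root mod $p$; handling these degenerate moduli and frequencies — via a $\gcd$ analysis and the trivial bound on a sparse set — without losing the power saving is the delicate part. Second, and more seriously, tracking the exact exponents: the threshold $5/9$ is sharp for this method, so the estimate must be run with the dual lengths $H_x, H_y$ kept as tight as the Fourier decay allows, and the final summation over $(h,k)$ combined with $\sum_{n} 1 \asymp 1$ per residue class must be optimized so the total is $d^{-1}X^{5/6-\eta+O(\eta)}$; any wasteful step (e.g. extending $H_y$ too far, or using $S_d(h,k)\ll d^{3/2}$ uniformly rather than the sharper square-root-in-each-variable bound weighted by the frequency ranges) pushes the range below $5/9$. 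I would therefore set up the Poisson applications with explicit truncation parameters tied to $\eta$, isolate the main term carefully, and then feed the complete sums into the Weil bound, reserving a short lemma for the degenerate cases.
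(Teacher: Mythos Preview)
Your overall architecture matches the paper's proof exactly: open $w_{dn}$, apply Poisson summation in both $x$ and $y$ modulo $d$, cancel the zero frequency against $\sum_n b_{dn}$, and bound the complete sums $S_d(h_1,h_2)=\sum_{\substack{x,y\pmod d\\ ax^2+by^3\equiv 0}} e_d(h_1x+h_2y)$ via Weil. The gap is in how you bound $S_d$.

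You propose to detect the congruence $ax^2+by^3\equiv 0\pmod d$ by an additive character $\frac{1}{d}\sum_t e_d(t\cdot)$ and then separate the $x$- and $y$-sums. For a prime $p\mid d$ this gives
\[
S_p(h_1,h_2)=\frac{1}{p}\sum_{t\pmod p}\Big(\sum_x e_p(tax^2+h_1x)\Big)\Big(\sum_y e_p(tby^3+h_2y)\Big),
\]
and bounding each factor by $p^{1/2}$ and summing over $t$ yields only $S_p\ll p$, hence $S_d\ll d^{1+o(1)}$. (Your line ``$S_d(h,k)\ll d\cdot d^{1/2}=d^{3/2}$'' does not parse under any consistent normalization; with the $t$-sum included and the $1/d$ excluded one gets $d^2$, and after dividing by $d$ one gets $d$.) Feeding $S_d\ll d$ into the error term gives the range $d\leq X^{5/12}$, not $X^{5/9}$: you need $d^{3/2}\leq X^{5/6}$, and your method only delivers $d^2\leq X^{5/6}$.

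What the paper does instead is to parametrize the affine curve $ax^2+by^3\equiv 0\pmod p$ directly. For $p\nmid ab$, substituting $x\equiv zy$ forces $y\equiv -a\overline{b}\,z^2$ and $x\equiv -a\overline{b}\,z^3$, so
\[
S_p(h_1,h_2)=\sum_{z\pmod p} e_p\big(-a\overline{b}(h_1 z^3+h_2 z^2)\big)\ll \gcd(h_1,h_2,p)^{1/2}\,p^{1/2}
\]
by the Weil bound for a single-variable cubic. This is the missing idea: the solution set is a \emph{curve}, so the character sum along it is $O(p^{1/2})$, and one accesses this via the rational parametrization rather than by lifting to a two-variable sum over an extra character $t$. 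With $S_d\ll d^{1/2+o(1)}$ the error becomes $\ll d^{1/2+o(1)}$, and $d^{1/2}\leq d^{-1}X^{5/6}$ exactly recovers the threshold $d\leq X^{5/9}$. Your worries about degenerate primes are minor once you have the parametrization; the substitution handles $p\mid ab$ in one line (one of $x,y$ is forced to vanish), and the $\gcd(h_1,h_2,p)$ factor absorbs the degenerate frequencies.
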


\begin{proposition}[Type II information in $(1/6,1/3)$]  \label{prop:typeII}
 Let $a,b \neq 0 $ be coprime integers.  Let $\eps > 0$ and assume that Conjecture $\mathrm{C}_a(\eps)$  holds.  Let  $M,N>1$ satisfy for some $\eta>0$
 \begin{align*}
     X^{1/6+\eta} \leq N \leq X^{1/3-2\eps/3-\eta}, \quad MN= X.
 \end{align*}
Let $W:= X^{(\log\log X)^{-2}}$. Let $\alpha_m, \beta_n$ be bounded coefficients supported on square-free integers with $\gcd(mn,P(W))=1$. Suppose that $\beta_n$ satisfies the Siegel-Walfisz condition, that is,  for all  $r,q,N' \leq 2 N$ we have for any $C>0$
 \begin{align} \label{eq:betacondition}
     \sum_{\substack{n \leq N' \\ n \equiv r \pmod{q}}} \beta_n = \mathbf{1}_{\gcd(r,q)=1}\frac{N'}{N \varphi(q)} \sum_{\substack{n \leq N \\ (n,q)=1}} \beta_n + O_C(N(\log N)^{-C}).
 \end{align}
Then we have for any $C>0$
 \begin{align*}
     \sum_{m \sim M} \sum_{n} \alpha_m \beta_n w_{mn} \ll_C \frac{  X^{5/6}}{(\log X)^C}.
 \end{align*}
\end{proposition}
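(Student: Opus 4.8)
The plan is to apply Cauchy--Schwarz in the long variable $m$ (of length $M=X/N$), and then, after unfolding $a_{mn}$ and invoking Lemma~\ref{le:binaryrep}, to reduce the off-diagonal to a main term (cancelled by the contribution of $b_{mn}$) plus sums of Hecke eigenvalues along binary cubic forms controlled by Conjecture $\mathrm{C}_a(\eps)$. Since $\alpha$ is bounded and supported on $\ll M$ integers, Cauchy--Schwarz gives
\begin{align*}
\Bigl|\sum_{m\sim M}\sum_n\alpha_m\beta_n w_{mn}\Bigr|^2\ll M\sum_{m\sim M}\Bigl|\sum_n\beta_n w_{mn}\Bigr|^2=M\sum_{n_1,n_2}\beta_{n_1}\overline{\beta_{n_2}}\sum_m w_{mn_1}w_{mn_2},
\end{align*}
so it suffices to show $\sum_{n_1,n_2}\beta_{n_1}\overline{\beta_{n_2}}\sum_m w_{mn_1}w_{mn_2}\ll_C NX^{2/3}(\log X)^{-2C}$. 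On the diagonal $n_1=n_2$ one bounds $\sum_n|\beta_n|^2\sum_m w_{mn}^2\le 2\sum_n\sum_m(a_{mn}^2+b_{mn}^2)\le X^{o(1)}\sum_\ell r(\ell)^2+(\text{negligible})$, where $r(\ell)$ counts representations $\ell=ax^2+by^3$ with $x\in[A,(1+\delta)A]$, $y\in[B,(1+\delta)B]$; since such $x_1,x_2$ (resp.\ $y_1,y_2$) force $|x_1^2-x_2^2|\ll\delta A^2$ (resp.\ $|y_1^3-y_2^3|\ll\delta B^3$), the equation $a(x_1^2-x_2^2)=b(y_2^3-y_1^3)$ has $\ll\delta^2 B^2$ admissible values, each with $X^{o(1)}$ solutions, whence $\sum_\ell r(\ell)^2\ll\delta^2X^{5/6+o(1)}$ and the diagonal is $\ll X^{5/6+o(1)}$ --- admissible precisely because $N\ge X^{1/6+\eta}$. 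Pairs with $\gcd(n_1,n_2)>1$ are removed the same way.

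For $n_1\ne n_2$ (coprime, odd, square-free, by the support of $\beta$) I write $w_{mn_i}=a_{mn_i}-b_{mn_i}$ and expand into four products. Unfolding the divisibility, $\sum_m a_{mn_1}a_{mn_2}=\sum_k\Upsilon_{n_1,n_2}(k)\mathcal{Q}_{n_1,n_2}(k)$, where $k=a(n_2x_1^2-n_1x_2^2)=b(n_1y_2^3-n_2y_1^3)$ with $|k|\asymp NX$, $\Upsilon_{n_1,n_2}(k)$ counts $k=b(n_1y_2^3-n_2y_1^3)$ weighted by $f_2(\tfrac{y_1}{B})f_2(\tfrac{y_2}{B})$, and $\mathcal{Q}_{n_1,n_2}(k)$ counts $k=a(n_2x_1^2-n_1x_2^2)$ weighted by $f_1(\tfrac{x_1}{A})f_1(\tfrac{x_2}{A})$ --- a representation count by the indefinite form of discriminant $d=4n_1n_2$ (the divisibility $n_1\mid ax_1^2+by_1^3$ needed to recover $m$ being automatic modulo $n_1$ given $\gcd(n_1,n_2)=1$). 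The contribution of small $|k|$, which occurs only when $|n_1-n_2|\ll\delta N$, is split off and handled by a separate, direct argument. For the rest, Lemma~\ref{le:binaryrep} (applied with $k>0$ and $k<0$ and with $\delta_0$ a small negative power of $X$, costing only a $\log X$) expands $\mathcal{Q}_{n_1,n_2}(k)$ as $\tfrac{1}{h(d)R_d}\sum_\ell\sum_{\chi\in\widehat{\CC(d)}}\tfrac{1}{2\pi i}\int_{(0)}Z^s\check{F}(\ell,s)\psi_{\chi\xi^{\ell},s}(\mathfrak{n}_2)\lambda_{\chi\xi^{\ell},s}(|k/a|)\,ds$, and \eqref{eq:Fdecay} truncates $|\ell|\ll R_d/\delta$ and $|s|\ll\delta^{-1}$ up to powers of $\log X$. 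For $\chi\xi^{\ell}\ne 1$, partial summation in $y_1,y_2$ and in the unimodular factor $|k/a|^{-it/2}$ passes to sharp cutoffs, and the first bound of Conjecture $\mathrm{C}_a(\eps)$ --- with $C(X,Y)=bn_1X^3-bn_2Y^3$, so $c_1,c_2\asymp N$, $B_1,B_2\asymp B\le X^{1/3}$ and conductor $\ll\max\{B^2,N^2,R_d^2\}^{1+o(1)}\ll X^{2/3+o(1)}$ --- gives $\sum_k\Upsilon_{n_1,n_2}(k)\lambda_{\chi\xi^{\ell}}(|k/a|)\ll X^{1/3+2\eps/3+o(1)}$. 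Summing over $|\ell|\ll R_d/\delta$, over $\chi$ and over $n_1,n_2\sim N$, and using $2h(d)R_d=\sqrt{d}\,L(1,(\tfrac{d}{\cdot}))$ to cancel the count of harmonics against $1/(h(d)R_d)$, the total is $\ll N^2X^{1/3+2\eps/3+o(1)}$, which is $\ll NX^{2/3}(\log X)^{-2C}$ exactly because $N\le X^{1/3-2\eps/3-\eta}$ (only an average over this family of eigenvalues is needed, cf.\ Remark~\ref{remark:largesieve}).

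The term $\chi\xi^{\ell}=1$ is a genuine main term of size $\asymp NX^{2/3}$; it must be matched by the main terms of $\sum_m a_{mn_1}b_{mn_2}$, $\sum_m b_{mn_1}a_{mn_2}$ and $\sum_m b_{mn_1}b_{mn_2}$, so that the four-fold combination $\sum_m w_{mn_1}w_{mn_2}$ is small. Since $b_{mn}$ is a smooth multiple of $f(\tfrac{mn}{X})$, the first of these is a Type~I sum for the modulus $n_1\ll X^{1/3}<X^{5/9}$, handled by Poisson summation in $x,y$ and the Weil bound as in Proposition~\ref{prop:typeI}, and $\sum_m b_{mn_1}b_{mn_2}$ is elementary; the main terms agree because $b_{mn}$ is, by construction, the expected value of $a_{mn}$ (the relevant local density being $1$ for moduli coprime to $2ab$), and the resulting $n_1,n_2$--averages are evaluated using the Siegel--Walfisz hypothesis \eqref{eq:betacondition} on $\beta$. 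For $\sum_m a_{mn_1}a_{mn_2}$, where $\lambda_{1}=1\ast(\tfrac{d}{\cdot})$, I would: (i) replace $\lambda_{1}(|k/a|)$ by $\lambda^\sharp_{1}(|k/a|,T)$ with $T\asymp B_1+B_2$, the error being precisely the second bound of Conjecture $\mathrm{C}_a(\eps)$, which (saving a power of $X$ since $N<X^{1/3}$) contributes $\ll NX^{2/3-\eta'+o(1)}$; (ii) replace $L(1,(\tfrac{4n_1n_2}{\cdot}))^{-1}$ by $\sum_{k\le X^{\eps}}\tfrac{\mu(k)}{k}(\tfrac{4n_1n_2}{k})$ on average over $n_1,n_2$ via Lemma~\ref{le:Linverse} and Cauchy--Schwarz, the complementary factor being controlled by divisor bounds; (iii) apply Poisson summation (Lemma~\ref{le:poisson}) in $y_1$ and in $y_2$ to the resulting congruence $ac\mid n_1y_2^3-n_2y_1^3$ (up to bounded factors), the zero frequency producing $B^2\widehat{f_2}(0)^2(ac)^{-2}\,\#\{(y_1,y_2)\in(\Z/ac\Z)^2:\ n_1y_2^3\equiv n_2y_1^3\pmod{ac}\}$ (up to the $|k/a|^{-it/2}$--twist) and the nonzero frequencies cubic Gauss sum errors controlled by standard bounds; and (iv) expand the cubic congruence count by cubic Dirichlet characters modulo $ac$ --- the principal character, together with the twists $(\tfrac{n}{c})$, $(\tfrac{n}{k})$ from (i)--(ii), reassembling into the main term (matched by the $b$--pieces, again via \eqref{eq:betacondition}), while the non-principal cubic characters leave an error of the shape
\begin{align*}
\frac{1}{N}\sum_{c\le T}\frac{1}{c}\sum_{k\le X^{\eps}}\frac{\mu(k)}{k}\sum_{\substack{\chi\bmod c\\ \chi^3=\chi_0\ne\chi_0}}\Bigl|\sum_{n\sim N}\beta_n\chi(n)(\tfrac{n}{ck})\Bigr|^2,
\end{align*}
which is $\ll NX^{2/3}(\log X)^{-2C}$ by the large sieve for sextic characters of Baier and Young \cite{baieryoungcubic}.

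The hard part will be step (iv): following the main term faithfully through the truncation of $L(1,(\tfrac{4n_1n_2}{\cdot}))^{-1}$, the double Poisson summation and the cubic--character expansion, and verifying that it cancels exactly against the (individually large, of size $\asymp NX^{2/3}$) main terms of the $b$--pieces --- the naive divisor-bound estimate for the $\chi\xi^{\ell}=1$ term being too lossy by a factor $X^{o(1)}$ --- while at the same time extracting enough uniformity in the conductors $c\le T\asymp X^{1/3}$ and the auxiliary modulus $k\le X^{\eps}$ for the sextic large sieve, given that $N$ can be as large as $X^{1/3-2\eps/3}$.
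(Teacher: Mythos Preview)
Your overall strategy mirrors the paper's: Cauchy--Schwarz in $m$, expand the quadratic-form count $\mathcal{Q}_{n_1,n_2}(k)$ via Lemma~\ref{le:binaryrep}, invoke Conjecture~$\mathrm{C}_a(\eps)$ for $\chi\xi^\ell\ne 1$, and treat the $\chi\xi^\ell=1$ term by truncating $\lambda_1$, replacing $L(1,(\tfrac{d}{\cdot}))^{-1}$ via Lemma~\ref{le:Linverse}, applying Poisson in $y_1,y_2$, and bounding the non-principal cubic characters by the Baier--Young large sieve. That skeleton is correct, and your identification of the hard step is accurate.

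The genuine gap is the main-term matching. You apply Cauchy--Schwarz directly to $w_{mn}=a_{mn}-b_{mn}$ and assert that the main terms of $\sum_m a_{mn_1}a_{mn_2}$, $\sum_m a_{mn_1}b_{mn_2}$, $\sum_m b_{mn_1}a_{mn_2}$, $\sum_m b_{mn_1}b_{mn_2}$ cancel ``because $b_{mn}$ is the expected value of $a_{mn}$''. This is exactly the step the paper flags as failing: once Cauchy--Schwarz is applied, $m$ runs over \emph{all} integers, and the local densities of $a_n$ and $b_n$ modulo prime powers no longer agree (for instance $\#\{(x,y)\bmod p^2:ax^2+by^3\equiv 0\}\ne p^2$ in general). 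Consequently the $aa$ main term --- produced by the Hecke expansion and carrying the factor $(h(d)R_d)^{-1}$ together with the local cubic densities --- does not coincide with the elementary $bb$ main term or with the Type~I main terms from the cross pieces, and no amount of bookkeeping in step~(iv) will force them to. The paper circumvents this by inserting an auxiliary ``random model'' sequence $a_n^{(2)}$ and splitting $w=(a^{(1)}-a^{(2)})+(a^{(2)}-b)$: the second difference is handled \emph{without} Cauchy--Schwarz (Proposition~\ref{prop:typeii0}), so the square-power issue never arises; the first difference goes through Cauchy--Schwarz, but now all four pieces $\mathcal{U}(i_1,i_2)$ pass through the \emph{same} Hecke/Poisson framework, and the main terms match term-by-term (Lemma~\ref{le:12vs22} and Sections~\ref{sec:m2eval}--7.8). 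Your proposal lacks this intermediary; without it, or without one of the two alternatives the paper mentions (detecting square-freeness of $m$ inside the Cauchy--Schwarz, which would force Dirichlet twists of $\chi\xi^\ell$ and enlarge the conjecture required), the argument does not close.
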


\begin{proposition}[Type I$_2$ information up to $1/4$] \label{prop:typeI2}
 Let $a,b > 0 $ be coprime integers. For any $K \leq X^{3/4}$  we have
  \begin{align*}
    \sum_{d \leq X^{1/4-\eta}} \bigg| \sum_{k \equiv 0\pmod{d} }  \mu^2(k) f(\tfrac{k}{K}) 
 \sum_{n }  w_{k n} \bigg| \ll d^{-1} X^{5/6-\eta}.
  \end{align*}
\end{proposition}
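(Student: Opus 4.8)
We only indicate the strategy; the details, together with a more general statement, will appear in \cite{GMquadratic}. Fix a dyadic scale $D_2\le X^{1/4-\eta}$ for the variable $d$.

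\textbf{Reduction to a divisor problem along a quadratic polynomial.} Unfolding $w_{kn}=a_{kn}-b_{kn}$ and writing $Q_y(x):=ax^2+by^3$, the quantity inside the absolute value equals, up to an error of size $O(X^{-100})$,
\[
\sum_{x,y}f_1(\tfrac{x}{A})f_2(\tfrac{y}{B})\sum_{dm\mid Q_y(x)}\mu^2(dm)f(\tfrac{dm}{K})\;-\;\frac{AB\,\widehat{f_1}(0)\widehat{f_2}(0)}{X\widehat{f}(0)}\sum_{k\equiv 0\pmod{d}}\mu^2(k)f(\tfrac{k}{K})\sum_n f(\tfrac{kn}{X}),
\]
that is, a restricted divisor sum for the values of $Q_y$, viewed as a quadratic polynomial in $x$ for each fixed $y$, minus its predicted main term; note $\operatorname{disc}Q_y=-4ab\,y^3<0$ since $a,b>0$. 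The plan is to treat this by the Dirichlet hyperbola method: for fixed $y$, among the divisors $m\mid Q_y(x)/d$ one always satisfies $m\le (Q_y(x)/d)^{1/2}$, so it suffices to handle $\sum_{x:\,q\mid Q_y(x)}f_1(x/A)(\cdots)$ with $q=dm\ll X^{1/2}D_2^{1/2}$, after splitting $q\mid Q_y(x)$ into the residue classes $x\equiv\rho\pmod{q}$ indexed by the roots of $Q_y(\rho)\equiv 0\pmod{q}$. In the notation of the Overview the critical configuration is $d\sim D_2$, $m\sim n\sim\sqrt{X/D_2}$, where $q$ attains its largest size $\asymp X^{1/2}D_2^{1/2}$.

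\textbf{Poisson summation in $x$.} For each $y$, $q$ and root $\rho$ we apply Poisson summation to $\sum_{x\equiv\rho\pmod{q}}f_1(x/A)$. Summed back over $q$ and $y$, the zero frequency reproduces precisely the predicted main term subtracted above: this uses the local count $\#\{x\bmod p:\,p\mid Q_y(x)\}=p$ (established in the proof of Proposition~\ref{prop:typeI}), which forces the singular series of $Q_y$ to be identically $1$, so that the $h=0$ term cancels the $b$-contribution inside the absolute value. The nonzero frequencies $1\le|h|\le H$, with
\[
H\ll X^{o(1)}\,q/A\ll X^{o(1)}D_2^{1/2}\approx X^{1/8+o(1)},
\]
carry the error term; the relevant exponential sum is an incomplete Gauss (Salié-type) sum attached to the square roots of $-4ab\,y^3$ modulo $q$.

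\textbf{Spectral treatment via Heegner orbits.} Here the factorisation $y^3=y\cdot y^2$ is essential: a square root of $-4ab\,y^3$ is $y$ times a square root of $-4ab\,y$, so the exponential sum is governed by a set $\Lambda_y\subseteq\mathbb{H}$ of Heegner points of discriminant $\asymp y$, with $\#\Lambda_y\approx y^{1/2}\approx X^{1/6}$. Opening these sums as in Duke--Friedlander--Iwaniec \cite{DFIprimes} and feeding in the average over the orbit --- the \emph{averages over orbits} of \cite{GM} --- yields a spectral expansion of the error term over the ($L^2$-normalised) Maass cusp forms $u_j$ for $\Gamma_0(d)$ (together with an analogous Eisenstein contribution), morally of the shape
\[
\frac{B^{1/2}}{H^{1/2}}\sum_{y}f(\tfrac{y}{B})\sum_{d\sim D_2}\alpha_d\sum_{\substack{j\\ |t_j|\ll 1}}^{\Gamma_0(d)}\lambda_j(y)\sum_{h\sim H}\frac{\lambda_j(h)}{d^{1/2}}\sum_{z\in\Lambda_y}\sum_{\tau}u_j(\tau z),
\]
where $\tau$ runs over $O(d^{o(1)})$ cosets of $\Gamma_0(d)\backslash\SL_2(\Z)$ and $|\alpha_d|\le1$ (the signs coming from the absolute values in the statement). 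Applying the Cauchy--Schwarz inequality with $(z,\tau)$ on the outside, estimating the $h$-average by the spectral large sieve for $\Gamma_0(d)$ and the $y$-average by the Rankin--Selberg bound $\sum_y f(y/B)|\lambda_j(y)|^2\le(dB)^{o(1)}B$, would give the admissible range $D_2<X^{1/6}$; the extra factor $(\#\Lambda_y)^{1/2}\approx X^{1/12}$ saved from the orbit average raises this to $D_2<X^{1/4}$, which covers $D_2\le X^{1/4-\eta}$ (and indeed any $D_2>X^{1/6+\eta}$ would already suffice for Theorem~\ref{thm:asymp}, cf.\ the discussion of sieve discontinuities in \cite{fordmaynard}).

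\textbf{The main difficulty, and the remaining cases.} The heart of the matter is to realise the $(\#\Lambda_y)^{1/2}$ saving rigorously and \emph{uniformly in the level} $d\asymp X^{1/4}$: one needs what amounts to square-root cancellation in $\sum_{z\in\Lambda_y}u_j(\tau z)$ on average over $j$, over the Heegner discriminant $y\asymp X^{1/3}$, and over $\tau$, controlling simultaneously the dependence on the spectral parameter, on $y$, and on $d$ --- which is exactly what the orbit-average machinery of \cite{GM} is designed to supply, and which fails in a straightforward implementation of \cite{DFIprimes}. The remaining (non-critical) configurations, in which one of $d,m,n$ is noticeably smaller than its balanced size so that the dual length $H$ is correspondingly shorter, are disposed of by the same Poisson step followed by elementary estimates, as in the Type I treatment, and the Eisenstein contribution is handled analogously via the explicit Fourier expansions and standard bounds for $\zeta(1+it)^{-1}$. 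Positivity of $a,b$ is used to keep $Q_y$ --- and the associated binary quadratic forms --- of a fixed sign, so that the Heegner-point parametrisation is available; it may be possible to relax this assumption using \cite{toth}.
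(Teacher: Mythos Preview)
Your sketch follows essentially the same approach as the paper's own overview (Section~1.2.3): Poisson in $x$ for fixed $y$, the factorisation $y^3=y\cdot y^2$ to access Heegner points, the DFI-style spectral expansion, and the crucial $(\#\Lambda_y)^{1/2}\approx X^{1/12}$ saving from the orbit average of \cite{GM} that pushes the range from $D_2<X^{1/6}$ to $D_2<X^{1/4}$. Note that the paper itself does not give a proof of this proposition beyond that sketch --- it explicitly defers the details to the forthcoming \cite{GMquadratic} --- so there is no more detailed argument in the paper to compare against.
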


\section{Proof of Proposition \ref{prop:typeI}} \label{sec:typei}
The argument is a routine application of the Poisson summation formula and the Weil bound for exponential sums. We have by Lemma \ref{le:poisson}
\begin{align*}
      & \sum_{n } a_{dn}  =\sum_{\substack{x,y \\ ax^2+by^3 \equiv 0 \pmod{d}}}  f_1(\tfrac{x}{A}) f_2(\tfrac{y}{B})  = M_\mathcal{A} + O(X^{2\eta^2} E_\mathcal{A} ) + O_\eps(X^{-100}),
\end{align*}
where for $H_1=X^{\eta^2} d /A,H_2=X^{\eta^2} d /B$
\begin{align*}
 M_\mathcal{A}  &=  AB \widehat{f_1}(0) \widehat{f_2}(0) \frac{\#\{x,y \in \Z/d\Z: ax^2+by^3 \equiv 0 \pmod{d}\}}{d^2}  \\
 E_\mathcal{A}  &= \frac{1}{H_1 H_2} \sum_{\substack{|h_1| \leq H_1\\ |h_2| \leq H_2 \\ (h_1,h_2) \neq (0,0)}} \bigg| \sum_{\substack{x,y \pmod{d} \\ ax^2+by^3 \equiv 0 \pmod{d} }} e_d(h_1 x + h_2 y) \bigg|
\end{align*}
Here by the Chinese remainder Theorem
\begin{align*}
   N_{a,b}(d)=\#\{x,y \in \Z/d\Z: ax^2+by^3 \equiv 0 \pmod{d}\} = \prod_{p} N_{a,b}(p).
\end{align*}
For $(a,p)=(b,p)=1$ substituting $x=zy$ gives $ N_{a,b}(p) = p.$ On the other hand, using $(a,b)=1$ we note that $p|a$ implies $p|y$ and $p|b$ implies $p|x$, so that also for $p|ab$ we  have $ N_{a,b}(p) = p.$  Therefore, we get
\begin{align*}
    M_\mathcal{A}  =  \frac{AB \widehat{f_1}(0) \widehat{f_2}(0) }{d}.
\end{align*}
We also have by Lemma \ref{le:poisson} 
\begin{align*}
  \sum_{n } b_{dn}  = \frac{AB \widehat{f_1}(0)\widehat{f_2}(0)  }{X\widehat{f}(0)  }     \sum_{n \equiv 0 \pmod{d}}   f(n/X) = \frac{AB \widehat{f_1}(0) \widehat{f_2}(0) }{d}  + O(X^{-100}),
\end{align*}
so that the two main terms cancel precisely. It then remains to bound the error term $E_\mathcal{A} $. We have by the Chinese remainder theorem 
\begin{align*}
 S_d(h_1,h_2) =   \sum_{\substack{x,y \pmod{d} \\ ax^2+by^3 \equiv 0 \pmod{d} }} e_d(h_1 x + h_2 y) = \prod_{p|d}  S_p(h_1 \overline{d/p},h_2\overline{d/p}) 
\end{align*}
For $(a,p)=(b,p)=1$ the substitution $x \equiv zy$ gives $y=-a\overline{b}z^2$, so that by the Weil bound (see, for instance, \cite{bombieri}) 
\begin{align*}
    S_p(h_1,h_2)   = \sum_{\substack{z \in \Z/p\Z }} e_p(-a \overline{b} (h_1 z^3 + h_2 z^2))  \ll \gcd(h_1,h_2,p)^{1/2} p^{1/2}.
\end{align*}
Thus, $ S_d(h_1,h_2) \ll \gcd(h_1,h_2,d)^{1/2} d^{1/2+o(1)}, $ and for $d \leq X^{5/9-\eta}$  
\begin{align*}
     E_\mathcal{A}  & \ll  d^{1/2+o(1)} \frac{1}{H_1 H_2} \sum_{\substack{|h_1| \leq H_1\\ |h_2| \leq H_2 \\ (h_1,h_2) \neq (0,0)}} \gcd(h_1,h_2,d)^{1/2}  \\
     &\ll d^{1/2+o(1)} \ll d^{-1}X^{5/6-3\eta/2+o(1)}.
\end{align*}
\qed

\section{Proof of Proposition \ref{prop:typeII}: initial reductions} \label{sec:typeii}

We wish to apply Cauchy-Schwarz to $m$ but face a problem, namely, the distribution of $a_n$ does not match $b_n$ modulo squares or larger powers. We have two options, either to remember that $m$ is square-free or modify the sequence $b_n$. Both options lead to unfortunate complications, but the latter allows us to minimize the conjecture required since for the first option we would also need to consider twists of $\chi \xi^\ell$ by Dirichlet characters. 

A robust solution is to construct an auxiliary sequence $a_n^{(2)}$ in between $a_n$ and $b_n$ as a kind of random model for $a_n$. While this causes some bother during the initial stages of the argument, it will greatly simplify the endgame. Let $F_{\eta^2}(u)$ denote a smooth non-negative bump function supported on $u \in [1,1+X^{-\eta^2}]$ with $F_{\eta^2}^{(J)} \ll_J X^{J \eta^2}$ and denote
\begin{align*}
    F_2(\tfrac{u}{U}) := \frac{F_{\eta^2}(\tfrac{u}{U})}{U^{2/3}\widehat{F_{\eta^2}}(0)}  \quad \text{with} \quad  U:=X^{1-\eta^2}.
\end{align*}
Let $ \lambda^W_d, \theta_n^W = \sum_{d|n}\lambda_d^W$ be as in Lemma \ref{le:cheapFLsieve}.  We define 
\begin{align*}
    \gamma^{(1)}_v &:= \sum_{y}\mathbf{1}_{v=b y^3} f_2(\tfrac{ y}{B}), \quad \quad \quad
    \gamma^{(2)}_v := \sum_{v=buy^3}    f_2(\tfrac{U^{1/3} y}{B})      F_2(\tfrac{u}{U}) \theta_u^W,
\end{align*}
 and for $j \in \{1,2\}$
\begin{align*}
    a_n^{(j)} := \sum_{n=a x^2+ v}  f_1(\tfrac{x}{A}) \gamma^{(j)}_v. 
\end{align*}
Note that then $a^{(1)}_n = a_n$. The function $F_2$ is normalized so that the densities match, that is, $    \int F_2(\tfrac{u}{U}) f_2(\tfrac{U^{1/3} y}{ B}) \d u \d y =   \int f_2(\tfrac{ y}{ B}) \d y $.
To show Proposition \ref{prop:typeII}, it suffices to prove the following two variants of Proposition \ref{prop:typeII}.
\begin{proposition} \label{prop:typeii0}
     Suppose that the assumptions of Proposition \ref{prop:typeII} hold.  Then for any $C>0$ 
     \begin{align*}
           \sum_{m\sim M} \alpha_m  \sum_n \beta_n (a^{(2)}_{m n}-b_{mn}) \ll_C \frac{X^{5/6}}{(\log X)^C}
     \end{align*}
\end{proposition}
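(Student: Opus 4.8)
The plan is to open up $a^{(2)}_{mn}$ and $b_{mn}$ and to show that each of the two bilinear sums equals the same main term up to $O_C(X^{5/6}(\log X)^{-C})$. The smooth side is immediate: since $b_{mn}=cf(mn/X)$ with $c=AB\widehat{f_1}(0)\widehat{f_2}(0)/(X\widehat{f}(0))$, we have $\sum_m\alpha_m\sum_n\beta_n b_{mn}=c\sum_m\alpha_m\sum_n\beta_n f(mn/X)$, and applying \eqref{eq:betacondition} with $q=1$ (after partial summation against the smooth weight $n\mapsto f(mn/X)$) to the inner sum for each $m$ produces the main term with total error $\ll_C c\,M\,N(\log N)^{-C}\ll_C X^{5/6}(\log X)^{-C}$.

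For the model side I would write, using $\theta^W_u=\sum_{d\mid u}\lambda^W_d$ and recombining the conditions $by^3\mid mn-ax^2$ and $d\mid u$ into the single congruence $by^3d\mid mn-ax^2$,
\begin{align*}
\sum_{m\sim M}\alpha_m\sum_n\beta_n a^{(2)}_{mn}=\sum_{m\sim M}\alpha_m\sum_n\beta_n\sum_{y}f_2(\tfrac{U^{1/3}y}{B})\sum_{\substack{d\mid P(W)\\ d\le W_1}}\lambda^W_d\sum_{\substack{x\\ by^3d\mid mn-ax^2}}f_1(\tfrac xA)F_2\!\left(\tfrac{mn-ax^2}{by^3U}\right).
\end{align*}
For fixed $m,n,y,d$ the $x$-sum runs over the solution set of $ax^2\equiv mn\pmod{by^3d}$, a union of $\rho_{by^3d}(mn):=\#\{x\bmod by^3d:ax^2\equiv mn\}\ll(by^3d)^{o(1)}$ residue classes, weighted by the smooth function $x\mapsto f_1(x/A)F_2((mn-ax^2)/(by^3U))$, which is supported on an $x$-interval of length $\asymp X^{1/2-\eta^2}$. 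For $\eta$ small this interval is much longer than the modulus $by^3d\le X^{\eta^2+\eta^3+o(1)}$, so Poisson summation (Lemma \ref{le:poisson}) has an empty dual range and yields only
\begin{align*}
\sum_{\substack{x\\ ax^2\equiv mn\,(by^3d)}}f_1(\tfrac xA)F_2\!\left(\tfrac{mn-ax^2}{by^3U}\right)=\frac{\rho_{by^3d}(mn)}{by^3d}\int_{\R}f_1(\tfrac xA)F_2\!\left(\tfrac{mn-ax^2}{by^3U}\right)\d x+O(X^{-50}).
\end{align*}

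It remains to evaluate the sum over $d$, $y$ and $n$. The sum $\sum_d\lambda^W_d\,\rho_{by^3d}(mn)/(by^3d)$ is a fundamental-lemma-of-the-sieve sum: by multiplicativity it is, up to a truncation error controlled as in Lemma \ref{le:cheapFLsieve} and \eqref{eq:gcdbound1}--\eqref{eq:gcdbound2} (the relevant divisor counts bounded by \eqref{eq:roughdivisorbound}), a product of local densities which, away from the primes dividing $aby$, is $\asymp 1/\big(by^3\,L(1,(\tfrac{\mathfrak d_{mn}}{\cdot}))\big)$ with $\mathfrak d_{mn}$ the fundamental discriminant attached to $amn$. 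I would then expand the truncated Euler product (equivalently, replace $L(1,(\tfrac{\mathfrak d_{mn}}{\cdot}))^{-1}$) as a short Dirichlet polynomial $\sum_{k\le K}\mu(k)(\tfrac{\mathfrak d_{mn}}{k})/k$, the mean-square error over the moduli being handled by Lemma \ref{le:Linverse}, and separate the $k$-sum. The term $k=1$ is the main term, and because of the density-matching normalisation of $F_2$ together with \eqref{eq:betacondition} applied to the $n$-sum (now against a smooth weight times a character of modulus $\operatorname{rad}(aby)K\ll(\log X)^{O(1)}<2N$) it coincides with $\sum_m\alpha_m\sum_n\beta_n b_{mn}$ up to $O_C(X^{5/6}(\log X)^{-C})$. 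For $k>1$ the Jacobi symbol $(\tfrac{\cdot}{k})$ is non-principal in $n$, so \eqref{eq:betacondition} (with modulus a multiple of $k$) gives cancellation in the $n$-sum. Collecting the Poisson error, the sieve (fundamental-lemma) error, the $L^{-1}$-approximation error and the Siegel--Walfisz errors, and using that the factor $1/(by^3d)$ from Poisson together with the size $\asymp X^{-1/6+2\eta^2/3}$ of the integral keeps every contribution at $\ll_C X^{5/6}(\log X)^{-C}$, finishes the proof.

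The step I expect to be hardest is this last one. One must arrange the reduction so that $\beta_n$ is summed only against smooth weights times Dirichlet characters of small modulus — so that the hypothesis \eqref{eq:betacondition} can be applied — while simultaneously checking that the two main terms agree \emph{exactly} (this involves the $p$-adic densities at the primes dividing $aby$, the normalisation of $F_2$, and the cube $y^3$), and that the auxiliary averages over $y$, $d$ and $k$ do not eat up the $(\log X)^{-C}$ saving. Performing the averages over $x$ and over $u$ (i.e.\ over $d$) \emph{before} touching $\beta_n$ is essential here: a naive application of \eqref{eq:betacondition} for each fixed $x$ would lose a factor $X^{O(\eta^2)}$ and be useless.
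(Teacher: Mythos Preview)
Your treatment of the $b_{mn}$ side and the Poisson step on $x$ are correct and match the paper. The divergence comes after Poisson, where you take an unnecessary detour through $L(1,\chi_{amn})^{-1}$ that introduces real difficulties.

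The detour has several gaps. First, the sieve sum $\sum_d\lambda^W_d\rho_{by^3d}(mn)/(by^3d)$ is only a product over primes $p<W$, not the full $L^{-1}$; passing between the two involves a tail $\prod_{p\ge W}(1-\chi_{amn}(p)/p)$ whose control (uniformly in $mn\asymp X$) you do not address. Second, even the truncated product is not exactly $\prod_{p<W}(1-\chi_{amn}(p)/p)$: the local factor is $(p-1-\chi_{amn}(p))/(p-1)$, differing by a convergent Euler product that still depends on $mn$, plus genuinely different factors at $p\mid aby$. These $mn$-dependent corrections cannot be swept into the ``$k=1$'' term. Third, invoking Lemma~\ref{le:Linverse} requires an $\ell^2$ average over discriminants of size $\asymp X$; combining that with the bilinear weights $\alpha_m\beta_n$ via Cauchy--Schwarz is not automatic and you do not carry it out. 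Finally, your remark that the relevant modulus is $\operatorname{rad}(aby)K\ll(\log X)^{O(1)}$ is inconsistent with any choice of $K$ large enough to make Lemma~\ref{le:Linverse} useful.

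The paper avoids all of this. After Poisson one simply keeps the outer sum over $q=bd$ and $y$, and unfolds $\varrho_2(mn,qy^3)=\sum_{x\in(\Z/qy^3\Z)^\times}\mathbf 1_{n\equiv \overline m\,ax^2\pmod{qy^3}}$ (using $\gcd(m,qy^3)=1$, which follows from $\gcd(mn,P(W))=1$). For fixed $q,y,m,x$ this is just a congruence condition on $n$ with modulus $qy^3\le X^{O(\eta^2)}$, so the Siegel--Walfisz hypothesis \eqref{eq:betacondition} applies directly. The principal-character contribution is exactly the ``$1$'' in $\varrho_2-1$, and the non-principal characters are handled by \eqref{eq:betacondition} for small conductors and the classical multiplicative large sieve for large conductors. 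No $L$-value, no Lemma~\ref{le:Linverse}, no evaluation of the $d$-sum is needed. The lemmas you cite (Lemma~\ref{le:cheapFLsieve}, Lemma~\ref{le:Linverse}) are used elsewhere in the paper --- for the comparison of $\Upsilon^{1,2}$ with $\Upsilon^{2,2}$ and for the genuine Type~II main term --- but not here.
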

\begin{proposition} \label{prop:typeii1}
  Suppose that the assumptions of Proposition \ref{prop:typeII} hold.  Then for any $C>0$ 
 \begin{align*}
     \sum_{m \sim M} \bigg|\sum_{n} \beta_n (a^{(1)}_{mn} -a^{(2)}_{mn} ) \bigg| \ll_C \frac{  X^{5/6}}{(\log X)^C}.
 \end{align*}
\end{proposition}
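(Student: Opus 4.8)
The plan is to show that, for every $m\sim M$, both
\[
\Sigma^{(1)}_m:=\sum_n\beta_n a^{(1)}_{mn}\qquad\text{and}\qquad\Sigma^{(2)}_m:=\sum_n\beta_n a^{(2)}_{mn}
\]
agree with one and the same main term $\mathrm{MT}_m$, up to error terms whose absolute values sum over $m\sim M$ to $\ll_C X^{5/6}(\log X)^{-C}$; since $\mathrm{MT}_m$ cancels in $\Sigma^{(1)}_m-\Sigma^{(2)}_m$ this gives the Proposition. The first step is to factor out the common quadratic variable: writing $v=by^3$ (resp. $v=buy^3$) and $n=(ax^2+v)/m$, one has
\begin{align*}
\Sigma^{(j)}_m=\sum_x f_1(\tfrac xA)\sum_{v\equiv -ax^2\pmod m}\gamma^{(j)}_v\,\beta_{(ax^2+v)/m}+O(X^{-100}),
\end{align*}
the $x$ with $\gcd(x,m)>1$ contributing negligibly via the divisor bound \eqref{eq:roughdivisorbound}. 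Thus it suffices, for each $(x,m)$ with $\gcd(x,m)=1$, to compare the inner sums weighted by $\gamma^{(1)}_v$ and by $\gamma^{(2)}_v$. Note that $\gamma^{(1)}$ and $\gamma^{(2)}$ have the same total mass by the normalisation of $F_2$, but differ in residue classes: $\sum_{v\equiv r\,(q)}\gamma^{(1)}_v$ carries a cubic‑residue factor $\rho_3(r\bar b;q)=\sum_{\chi^3=\chi_0}\chi(r\bar b)$ which is absent from $\sum_{v\equiv r\,(q)}\gamma^{(2)}_v$ precisely because the long variable $u$ in $\gamma^{(2)}$ has no cubic structure. This is the mechanism that will both match the main terms and confine all errors to cubic‑character sums.

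For the model sum I would exploit the length of $u$: since $U=X^{1-\eta^2}$ exceeds every modulus $\le m\le X^{5/6}$ that occurs, the sum over $u$ is genuinely long. For fixed $x,y$ the congruence $buy^3\equiv -ax^2\pmod m$ places $u$ in a single progression modulo $m/\gcd(by^3,m)$, along which $n=(ax^2+buy^3)/m$ traces a single progression modulo $\le by^3$; treating the $\beta$‑weight on this progression by the Siegel–Walfisz hypothesis \eqref{eq:betacondition} leaves a smooth $\theta^W_u$‑weighted sum in $u$, to which the cheap fundamental lemma, Lemma \ref{le:cheapFLsieve}, applies. Because $U$ is long, the Poisson‑dual terms in that lemma vanish, and — crucially — keeping $\theta^W$ unexpanded means no factor $W_1=X^{\eta^3}$ is lost; after summing over $x,y,m$ the residual errors are exactly of the shape bounded by \eqref{eq:gcdbound1} together with \eqref{eq:roughdivisorbound}. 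The principal term obtained is $\mathrm{MT}_m$.

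For the genuine sum there is no long variable, so I would detect the equation $ax^2+by^3=mn$ by the circle method. On the minor arcs one has (log‑power) cancellation from the quadratic Weyl sum $\sum_x f_1(\tfrac xA)e(ax^2\theta)$ and the cubic Weyl sum $\sum_y f_2(\tfrac yB)e(by^3\theta)$, while $\sum_n\beta_n e(-mn\theta)$ is bounded trivially; since $\|f_x\|_2\|f_n\|_2\ll A^{1/2}N^{1/2}\le X^{1/4+1/6}$ and the cubic Weyl bound beats $B$ by any power of $\log X$, the minor arcs contribute $\ll X^{3/4+o(1)}\ll X^{5/6}(\log X)^{-C}$. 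On the major arcs near $h/q$, the $q=1$ term reproduces $\mathrm{MT}_m$ (the densities of $\gamma^{(1)}$ and $\gamma^{(2)}$ coincide by construction of $F_2$), and the $q\ge2$ terms carry a cubic Gauss sum $\sum_{y\pmod q}e_q(\pm aby^3)$, i.e. a twist of $\beta$ by cubic Dirichlet characters and, after reciprocity, Jacobi symbols; these are estimated — exactly as in the treatment of the $\chi\xi^\ell=1$ main term in Sections \ref{sec:typeii}–\ref{sec:typeii1} — by the large sieve for sextic characters of Baier–Young and Heath-Brown's quadratic large sieve, the second‑moment expansions over the modulus being controlled by \eqref{eq:gcdbound2}.

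The hard part is to execute the last two steps with \emph{matching} main terms and with no loss of any power of $X$. One must check that the $q=1$ major‑arc term for $\gamma^{(1)}$ and the Lemma \ref{le:cheapFLsieve} main term for $\gamma^{(2)}$ are \emph{identical} (not merely equal on average), which is where the precise normalisation of $F_2$ and $\theta^W$ is used; one must keep the sieve weight $\theta^W$ intact throughout, since a single naïve expansion would cost the unaffordable factor $W_1=X^{\eta^3}$, and this constrains how the bilinear $\beta$–$\theta^W$ interaction can be organised; and one must bookkeep the $\gcd$‑losses in \eqref{eq:gcdbound1}–\eqref{eq:gcdbound2} and the conductors in the sextic large sieve uniformly across all of $x,y,m$. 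This is the intricate‑but‑unconditional stage of the argument, and it is precisely why the auxiliary sequence $a^{(2)}$ was introduced rather than comparing $a^{(1)}$ to $b$ directly — the latter would require Dirichlet‑twisted versions of Conjecture $\mathrm{C}_a(\eps)$.
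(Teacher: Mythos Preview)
Your approach has a fundamental gap: the circle method carried out for each fixed $m$ produces minor-arc errors that are far too large once summed over $m\sim M$. Your own computation gives a per-$m$ minor-arc bound of order $X^{3/4}$ (granting even your optimistic claim that the cubic Weyl sum saves arbitrary powers of $\log X$), but $M\ge X^{2/3}$, so the total is $\gg X^{17/12}$, hopelessly far from the target $X^{5/6}$. The point is that the trivial bound for $\Sigma_m^{(1)}$ is only $\asymp X^{5/6}/M \le X^{1/6}$ on average, and no per-$m$ analytic method (circle method or otherwise) can beat this without exploiting cancellation \emph{across} the values of $m$. This is precisely why the paper opens with Cauchy--Schwarz in $m$: squaring out produces the dispersion $\mathcal{U}(i_1,i_2)=\sum_{n_1,n_2}\beta_{n_1}\beta_{n_2}\sum_m a^{(i_1)}_{mn_1}a^{(i_2)}_{mn_2}$, and after eliminating $m$ the constraint becomes $a(n_2x_1^2-n_1x_2^2)=n_1v_2-n_2v_1$, separating into an indefinite binary quadratic form in $x_1,x_2$ and a cubic form in the $\gamma$-variables. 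The quadratic side is then expanded via Hecke characters of $\Q(\sqrt{4n_1n_2})$ using Lemma~\ref{le:binaryrep}, and the sums of $\lambda_{\chi\xi^\ell}$ along the binary cubic form are exactly what Conjecture~$\mathrm{C}_a(\eps)$ bounds.

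This also corrects your final remark: Proposition~\ref{prop:typeii1} is \emph{not} the unconditional stage --- it inherits the hypothesis of Proposition~\ref{prop:typeII}, and the paper invokes Conjecture~$\mathrm{C}_a(\eps)$ explicitly in bounding $\mathcal{E}_\epsilon(1)$, the small-$|n_2x_1^2-n_1x_2^2|$ range, and the $n_0>1$ pseudo-diagonal (see Section~\ref{sec:typeii1}). The reason $a^{(2)}$ is introduced is not to render the comparison unconditional, but so that after Cauchy--Schwarz the non-principal Hecke sums for $j=2$ can be handled by the unconditional Lemmas~\ref{le:smooothlambda}--\ref{le:lambdaflatsmooth} (via Lemma~\ref{le:22smooth}), leaving the conjecture needed only for $j=1$ in its untwisted form. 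Your large-sieve and sextic-character ingredients do reappear in the paper, but only at the very end, to match the $\chi\xi^\ell=1$ main terms $\mathcal{M}_\epsilon(1,K)$ and $\mathcal{M}_\epsilon(2,K)$ after the Hecke expansion --- not as a substitute for it.
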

The Proposition \ref{prop:typeii0} is easy to prove and we do this immediately. We will prove Proposition \ref{prop:typeii1}  in Section \ref{sec:typeii1}.
\subsection{Proof of Proposition \ref{prop:typeii0}}
By the Siegel-Walfisz property \eqref{eq:betacondition} and summation by parts we have (denoting $N=X/M$)
\begin{align*}
    \sum_{m\sim M} \alpha_m \sum_{n} \beta_n b_{mn} = AB \widehat{f_1}(0)\widehat{f_2}(0)  \sum_{m\sim M} \frac{\alpha_m}{m} \frac{1}{N} \sum_{n \leq N} \beta_n + O_C(X^{5/6} (\log X)^{-C}).
\end{align*}
Substituting for the variable $u$ and using the Poisson summation formula (Lemma \ref{le:poisson}) to the variable $x$, we have
\begin{align*}
    a^{(2)}_{mn} = &\sum_{d}  \lambda^W_d\sum_{y}f_2(\tfrac{U^{1/3} y}{B})\sum_{\substack{x \\ ax^2  \equiv n \pmod{b d y^3}}} f_1(\tfrac{x}{A})  F_2(\tfrac{mn-ax^2}{by^3 U})   \\
    =&\sum_{d } \lambda^W_d\sum_{y} \frac{\varrho_2(mn,b d y^3)}{b d y^3}  \mathcal{I}(mn,y) + O(X^{-100}), \\
    \varrho_2(a,q) :=& \#\{x \in \Z/q: ax^2  \equiv n \pmod{q}\},\\
    \mathcal{I}(n,y) = &f_2(\tfrac{U^{1/3} y}{B})\int f_1(\tfrac{x}{A}) F_2(\tfrac{n-ax^2}{by^3 U})  \d x = \frac{by^3}{\sqrt{a}}f_2(\tfrac{U^{1/3} y}{B})\int f_1(\tfrac{\sqrt{n-by^3 u}}{A\sqrt{a}}) F_2(\tfrac{u}{U})  \frac{\d u}{\sqrt{n-by^3 u}}.
\end{align*}
By differentiation under integration in the latter expression, we have for all $J_1,J_2 \geq 0$ 
\begin{align*}
    \partial_{n}^{J_1} \partial_{y}^{J_2} \mathcal{I}(n,y) \ll_{J_1,J_2} \frac{B^3}{A U^{2/3}}  \delta^{-J_1-J_2} n^{-J_1} y^{-J_2}.
\end{align*}
Since $\gcd(mn,P(W))=1$, the contribution from $\gcd(mn,q y^3)>1$ is negligible by crude estimates. Therefore, using summation by parts on $n$ and gluing the variables $q=bd$, the task is reduced to showing that for any $N' \leq 2N$ and $Y= B U^{-1/3} \leq X^\eta$ we have
\begin{align*}
    \sum_{q \leq X^\eta} \sum_{y \sim Y} \frac{1}{q y} \sum_{\substack{m \sim M\\ \gcd(mn,q y^3)=1}} \bigg| \sum_{\substack{n \leq N' \\\gcd(mn,q y^3)=1}} \beta_n \Bigl(\varrho_2(mn,q y^3) -   1\Bigr)  \bigg| \ll_C \frac{MN}{(\log X)^C}.
\end{align*}
Expanding the defininition of $\varrho_2(mn,qy^3)$ we have $\gcd(x,qy^3)=1$. Pulling the sum over $x \in (\Z/qy^3\Z)^\times$ outside and taking the maximum, we need to show that
\begin{align*}
    \sum_{q \leq X^\eta} \sum_{y \sim Y}  \max_{\substack{x }} \bigg| \sum_{\substack{n \leq N' \\ \gcd(n,qy^3)=1  }} \beta_n \bigg( \mathbf{1}_{n \equiv ax^2 \pmod{q y^3}}-   \frac{1}{ \varphi(q y^3)} \bigg) \bigg| \ll_C \frac{N}{(\log X)^C}.
\end{align*}
Expanding the congruence into Dirichlet characters modulo $q y^3$, the main terms for the principal character cancel. It then remains to show that the error term satisfies
\begin{align*}
 \sum_{q \leq X^\eta} \sum_{y \sim Y} \frac{1}{\varphi(q y^3)}\sum_{\substack{\chi \pmod{q y^3} \\ \chi \neq  \chi_0}} \bigg| \sum_{n \leq N'} \beta_n \chi(n)  \bigg|  \ll_C \frac{N}{(\log X)^C}
\end{align*}
Sorting by the conductor of the character $\chi$, we apply the Siegel-Walfisz property \eqref{eq:betacondition} for the small conductors $\leq (\log X)^{C_1}$. For the large conductors   $> (\log X)^{C_1}$ we may apply Cauchy-Schwarz,  estimate crudely the sum over cubes  by $\sum_{y \sim Y} S(y^3) \leq \sum_{z \sim Y^3} S(z)$,  and use the classical large sieve for multiplicative characters \cite[(9.52)]{odc} to get the claim, once $C_1$ is sufficiently large in terms of $C.$  \qed

\subsection{Lemmas on additive convolutions of $\gamma^{(i)}_v$}
Define for $i_1,i_2 \in \{1,2\}$
\begin{align} \label{eq:upsilondef}
     \Upsilon^{i_1,i_2}_{n_1,n_2}(m)  &:= \sum_{n_1 v_2-n_2v_1 = a m} \gamma^{(i_1)}_{v_1} \gamma^{(i_2)}_{v_2}
\end{align}
To prepare for the proof of Proposition 
\ref{prop:typeii1}, we need the following two lemmas.
\begin{lemma} \label{le:22smooth}
For  $n_1,n_2\leq X^{1/3-\eta}$ with  $\gcd(n_1,n_2)=1$  we have
    \begin{align*}
          \Upsilon^{2,2}_{n_1,n_2}(m) 
         &= \frac{B^2}{N_1X} \sum_{e \leq X^{4\eta^2}} \mathbf{1}_{b e|m} G_{e,n_1,n_2}( \tfrac{m}{n_1 X}) + O_\eta(X^{-100}),
    \end{align*}
    where $G_{e,n_1,n_2}(u)$ is a smooth function supported on $|u| \leq  4N_1 B^3$ which satisfies for all $J\geq 0$ the derivative bound  $G^{(J)}(u) \ll_J X^{J\eta^2 + O(\eta^2)} $.
\end{lemma}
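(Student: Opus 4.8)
The plan is to expand $\Upsilon^{2,2}_{n_1,n_2}(m)$ completely, solve the linear Diophantine constraint that ties together the two large variables $u_1,u_2$ so as to reduce the inner sum to one over a single integer parameter $t$, and then apply Poisson summation in $t$: the zero frequency will produce the smooth function $G$, and all nonzero frequencies will be negligible. Unfolding the two factors $\gamma^{(2)}$ and writing $\theta^W_{u_i}=\sum_{d_i\mid u_i}\lambda^W_{d_i}$,
\begin{align*}
\Upsilon^{2,2}_{n_1,n_2}(m)=\sum_{\substack{d_1,d_2\le W_1\\ d_i\mid P(W)}}\lambda^W_{d_1}\lambda^W_{d_2}\sum_{y_1,y_2}f_2\Big(\tfrac{U^{1/3}y_1}{B}\Big)f_2\Big(\tfrac{U^{1/3}y_2}{B}\Big)\sum_{\substack{d_i\mid u_i\\ b(n_1y_2^3u_2-n_2y_1^3u_1)=am}}F_2\Big(\tfrac{u_1}{U}\Big)F_2\Big(\tfrac{u_2}{U}\Big),
\end{align*}
where $y_i\asymp Y:=BU^{-1/3}\le X^{\eta^2/3}$, $u_i\in[U,U(1+X^{-\eta^2})]$ and $d_i\le W_1=X^{\eta^3}$; in particular at most $W_1^2Y^2=X^{O(\eta^2)}$ tuples $(d_1,d_2,y_1,y_2)$ occur.

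Next I would fix such a tuple and substitute $u_i=d_iw_i$, turning the inner constraint into $A_2w_2-A_1w_1=am$ with $A_1=bn_2y_1^3d_1$, $A_2=bn_1y_2^3d_2$. Writing $g':=\gcd(n_2y_1^3d_1,n_1y_2^3d_2)$ one has $\gcd(A_1,A_2)=bg'$, so the equation is solvable iff $bg'\mid am$; since $\gcd(a,b)=1$ this is exactly the condition $be\mid m$ with $e:=g'/\gcd(g',a)$, and as $\gcd(n_1,n_2)=1$ forces $g'\mid y_1^3y_2^3d_1d_2$ one gets $e\le X^{4\eta^2}$. When solvable, a fixed Bézout identity for $A_1,A_2$ produces a particular solution $(u_1^0(m),u_2^0(m))$ which is \emph{linear} in $m$ (integer-valued on the progression $be\mid m$), and the general solution is $(u_1,u_2)=(u_1^0(m),u_2^0(m))+t(\ell_1,\ell_2)$, $t\in\Z$, with $\ell_1=d_1d_2n_1y_2^3/g'$ and $\ell_2=d_1d_2n_2y_1^3/g'$, so $\ell_2/\ell_1=n_2y_1^3/(n_1y_2^3)$.

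Then comes Poisson summation in $t$. The inner sum is $\sum_t g_m(t)$ with $g_m(t)=F_2((u_1^0(m)+t\ell_1)/U)F_2((u_2^0(m)+t\ell_2)/U)$; since $n_i\le X^{1/3-\eta}$ one checks $\ell_i\le X^{1/3}$, so $g_m$ is a product of two smooth bumps whose $t$-width and $t$-smoothness scale are both $\gg X^{-\eta^2}U/\ell_i\gg X^{1/2}$, hence $g_m$ is smooth on scale $\gg X^{1/2}$; repeated integration by parts then yields $\sum_t g_m(t)=\widehat{g_m}(0)+O(X^{-100})$ uniformly — including in the regime where the two bump supports only partially overlap, where $g_m$ is itself tiny. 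The change of variables $s=(u_1^0(m)+t\ell_1)/U$, together with $\ell_2/\ell_1=n_2y_1^3/(n_1y_2^3)$ and the Bézout relation (which collapses the offset of the second argument), gives the clean formula
\begin{align*}
\widehat{g_m}(0)=\frac{U}{\ell_1}\int_{\R}F_2(s)\,F_2\Big(\frac{am}{bn_1y_2^3U}+s\,\frac{n_2y_1^3}{n_1y_2^3}\Big)\,\d s.
\end{align*}
Summing over $(d_1,d_2,y_1,y_2)$ and grouping by $e$ then gives the asserted identity, with $G_{e,n_1,n_2}(v)$ defined as $\frac{N_1X}{B^2}$ times the sum — over tuples with $e(d_1,d_2,y_1,y_2)=e$ — of $\lambda^W_{d_1}\lambda^W_{d_2}f_2(\tfrac{U^{1/3}y_1}{B})f_2(\tfrac{U^{1/3}y_2}{B})$ times $\frac{U}{\ell_1}\int F_2(s)F_2(\tfrac{aXv}{by_2^3U}+s\tfrac{n_2y_1^3}{n_1y_2^3})\d s$; this is visibly a smooth function of $v$.

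For the bounds on $G$: the inner integral vanishes unless $aXv/(by_2^3U)\asymp1$, i.e.\ $|v|\ll y_2^3U/X\ll 1\le 4N_1B^3$, which gives the support statement. For the size, the integrand lives on an $s$-set of length $\ll X^{-\eta^2}\min(1,\ell_1/\ell_2)$, so $\tfrac{U}{\ell_1}\int(\cdots)\ll X^{\eta^2}/(U^{1/3}\max(\ell_1,\ell_2))$; here the decisive point is that one saves $\max(\ell_1,\ell_2)$ (not merely $\ell_1$), and that $\gcd(n_1,n_2)=1$ — via $g'\le y_1^3y_2^3d_1d_2$ — forces $\max(\ell_1,\ell_2)\ge\max(n_1,n_2)X^{-\eta^2}\gg N_1X^{-\eta^2}$. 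Combined with $\#\{(d_i,y_i)\}\le X^{O(\eta^2)}$ and $|\lambda^W_{d_i}|\le X^{o(1)}$, this yields $G_{e,n_1,n_2}\ll X^{O(\eta^2)}$ after the $N_1$'s cancel. For derivatives, $\partial_v$ enters only through $aXv/(by_2^3U)$, whose $v$-derivative is $aX/(by_2^3U)\ll\delta^{-3}=X^{o(1)}$ because $y_2^3\asymp X^{\eta^2}$ cancels $X/U=X^{\eta^2}$; hence each differentiation costs only the bandwidth $\asymp X^{\eta^2}$ of $F_{\eta^2}$, giving $G^{(J)}_{e,n_1,n_2}\ll_J X^{J\eta^2+O(\eta^2)}$.

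\textbf{Main obstacle.} The crux is the size estimate just described: the naive bound $\tfrac{U}{\ell_1}\int(\cdots)\ll X^{\eta^2}/(U^{1/3}\ell_1)$ loses a factor $X^{1/3}$, and one genuinely needs both the $\max(\ell_1,\ell_2)$ saving coming from the measure of the overlap of the two $F_2$-supports and the lower bound $\max(\ell_1,\ell_2)\gg N_1X^{-\eta^2}$ forced by $\gcd(n_1,n_2)=1$. Keeping the Poisson error uniformly $O(X^{-100})$ in the partially overlapping regime, and keeping the differentiation cost down to the bandwidth of $F_{\eta^2}$ (via the cancellation $y_2^3\asymp X/U$), are the remaining delicate points.
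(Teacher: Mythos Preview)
Your argument is correct and follows essentially the same route as the paper: the paper substitutes $u_2$ in terms of $u_1$ and applies Poisson summation to $u_1$ along the resulting arithmetic progression modulo $d_2n_1y_2^3$, which is literally the same computation as your parametrization by $t$ and Poisson in $t$. You supply considerably more detail than the paper (which simply asserts that the properties of $G$ follow), in particular the $\max(\ell_1,\ell_2)$ saving and the bound $g'\mid d_1d_2y_1^3y_2^3$ from $\gcd(n_1,n_2)=1$, both of which are indeed the key points.
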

\begin{proof}
    We have
      \begin{align*}
            \Upsilon^{2,2}_{n_1,n_2}(m)  &=  \sum_{d_1,d_2} \lambda_{d_1}^W \lambda_{d_2}^W\sum_{\substack{y_1,y_2,u_1 \\ b(n_1 d_2 u_2 y_2^3 - n_2 d u_1 y_1^3) = am }} \frac{ f_2(\tfrac{U^{1/3} y_1}{B})F(\tfrac{d_1 u_1}{U})}{U^{2/3}\widehat{F}(0)} \frac{ f_2(\tfrac{U^{1/3} y_2}{B})F(\tfrac{d_2 u_2}{U})}{U^{2/3}\widehat{F}(0)}.
    \end{align*}
    Substituting 
    \begin{align*}
    u_2  := \frac{a\tfrac{m}{b} +n_2 d_1 u_1 y_1^3 }{d_2 n_1 y_2^3}
    \end{align*}  
   and applying Poisson summation to $u_1$ with $n_2 d_1 u_1 y_1^3 \equiv a\tfrac{m}{b} \pmod{d_2 n_1 y_2^3}$   gives the claim after splitting into the parts depending on
    \begin{align*}
        e_0 := \gcd(d_2n_1  y_2^3, d_1 n_2 y_1^3) |  \, a\tfrac{m}{b}
    \end{align*}
 and letting $e=\tfrac{e_0}{\gcd(e_0,a)}$. Note that by $\gcd(n_1,n_2) =1$ we have $e \leq d_1 d_2 y_1^3y_2^3 \leq X^{4\eta^2}$.
\end{proof}
\begin{lemma} \label{le:12vs22}
Let $n_1,n_2 \sim N_1 \leq X^{1/3-\eta}$ be square-free with $\gcd(n_1,n_2)=\gcd(n_1n_2,P(W))=1$, and let $(i_1,i_2) \in \{(1,2),(2,1)\}$. Then for any $C>0$
  \begin{align*}
      \sum_{x_1,x_2} f_1(\tfrac{x_1}{A})  f_1(\tfrac{x_2}{A})   \Bigl(\Upsilon_{n_1,n_2}^{i_1,i_2}(n_2 x_1^2 - n_1 x_2^2) - \Upsilon_{n_1,n_2}^{2,2}(n_2 x_1^2 - n_1 x_2^2) \Bigr) \ll_C  \frac{B^2}{N_1 (\log X)^{C}}. 
  \end{align*}
\end{lemma}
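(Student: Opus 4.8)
The plan is to compare $\gamma^{(1)}_v$ and $\gamma^{(2)}_v$ directly inside the additive convolution, exploiting the fact that both are designed to have the same density, and that $\theta^W_u$ acts as a fundamental-lemma sieve weight detecting $\gcd(u,P(W))=1$. First I would expand $\Upsilon^{i_1,i_2}_{n_1,n_2}(n_2x_1^2-n_1x_2^2)$ by substituting the definitions of $\gamma^{(i_1)}_{v_1},\gamma^{(i_2)}_{v_2}$, so that for $(i_1,i_2)=(2,1)$ one obtains a sum over $d_1,y_1,u_1,y_2,x_1,x_2$ with the linear relation $b(n_1 y_2^3 - n_2 d_1 u_1 y_1^3) = a(n_2x_1^2-n_1x_2^2)$, and similarly (with the roles of the two cubic blocks swapped) for $(i_1,i_2)=(1,2)$; the case $(2,2)$ has the extra variable $d_2,u_2$ on the other side. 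The key observation is that the relation determines $y_2^3$ (resp.\ $u_1$) once the other variables are fixed, so after resolving the congruence and applying Poisson summation in the appropriate variable, both sides become essentially the same smooth main term times an indicator on a $\gcd$ condition — the only discrepancy is the presence or absence of the extra $\sum_{d_2}\lambda^W_{d_2}F_2(d_2u_2/U)\theta$-type weight, i.e.\ whether we are detecting $\gcd(u_2,P(W))=1$ via $\theta^W$ or leaving $u_2$ unsieved.

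Next I would make this precise by following the same manipulation as in the proof of Lemma \ref{le:22smooth}: substitute for the variable that the linear equation pins down, apply Poisson summation (Lemma \ref{le:poisson}) to the remaining free cube variable, and extract the main term $\frac{B^2}{N_1 X}$ times a smooth function of $m/(n_1X)$ supported on $|u|\le 4N_1B^3$. Doing this for both $\Upsilon^{i_1,i_2}$ and $\Upsilon^{2,2}$, the main terms agree precisely because $F_2$ was normalized so that $\int F_2(u/U)f_2(U^{1/3}y/B)\,\d u\,\d y = \int f_2(y/B)\,\d y$, and because $\theta^W_u$ sums (with the $\lambda^W_d$ normalization from Lemma \ref{le:cheapFLsieve}) to $1$ on integers coprime to $P(W)$ up to an admissible error. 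Then summing the remaining $x_1,x_2$ against $f_1(x_1/A)f_1(x_2/A)$ and invoking Lemma \ref{le:cheapFLsieve} to control the error term $\theta^W_u - 1$ (with the $\gcd$-error estimates \eqref{eq:gcdbound1}, \eqref{eq:gcdbound2} and the rough-divisor bound \eqref{eq:roughdivisorbound} used exactly as advertised in the paragraph after that lemma), one gets a saving of $(\log X)^{-C}$ for any $C$, which is the claimed bound $B^2 N_1^{-1}(\log X)^{-C}$.

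The main obstacle I expect is bookkeeping rather than any genuine analytic difficulty: one must verify that the variable pinned down by the linear relation $b(n_1 y_2^3 - n_2 d_1 u_1 y_1^3) = a(n_2 x_1^2 - n_1 x_2^2)$ really does range over a long enough interval for Poisson summation to be lossless (this uses $n_1,n_2 \le X^{1/3-\eta}$, $B\asymp X^{1/3}$, $A\asymp X^{1/2}$, so $m = n_2x_1^2-n_1x_2^2$ has size $\asymp N_1 X$ and the cube variable has size $\asymp N_1 B^3 / (\text{stuff})$, comfortably polynomial), and that the $\gcd$-conditions which arise — the $e_0 = \gcd(d_2 n_1 y_2^3, d_1 n_2 y_1^3)$ type quantities and their interaction with $\gcd(mn,P(W))=1$ — are controlled by \eqref{eq:gcdbound1}–\eqref{eq:gcdbound2} after summing over $x_1,x_2$. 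A secondary point requiring care is that the $x_1,x_2$ sum and the $u$-congruence are entangled through the modulus $by^3 \cdot(\text{divisors})$; one resolves this by pulling the residue class of $x$ outside, exactly as in the proof of Proposition \ref{prop:typeii0}, so that the error term becomes a sum of the shape \eqref{eq:gcdbound1} and is bounded termwise. Everything else is a routine application of the machinery already assembled in Section \ref{sec:binaryquadr} and the Lemmas section.
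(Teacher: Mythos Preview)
Your overall strategy—substitute for the dense $u$-variable, apply Poisson and the sieve lemma, then match main terms—is in the right spirit, but there is a genuine gap in how you propose to match the main terms. You assert that, following the template of Lemma~\ref{le:22smooth}, the quantity $\Upsilon^{2,1}_{n_1,n_2}(m)$ has a main term of the form $\tfrac{B^2}{N_1X}$ times a smooth function of $m/(n_1X)$, and that this matches the corresponding expression for $\Upsilon^{2,2}$ because of the normalization of $F_2$. This is not correct. In the $(2,1)$ case there is only one $u$-variable; after substituting for $u_1$, the remaining long variable is $y_2\sim B$ and the constraint it must satisfy is the \emph{cubic} congruence $bn_1 y_2^3\equiv am\pmod{bn_2 d_1 y_1^3}$. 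Poisson in $y_2$ therefore yields a main term containing the cube-root count $\#\{y_2\bmod q:\ n_1 y_2^3\equiv am/b\}$, which is not a smooth function of $m$. The $F_2$-normalization equalizes only the Archimedean densities; it says nothing about this arithmetic factor, so the pointwise matching you describe fails.

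The paper sidesteps this by folding the $x_1,x_2$ sum into the Poisson step \emph{before} extracting any main term, and—crucially—by substituting for the \emph{same} variable $u_1$ in both $\Upsilon^{2,1}$ and $\Upsilon^{2,2}$ (not $u_2$ as in Lemma~\ref{le:22smooth}), so that both computations live modulo the same $q=d_1 y_1^3 n_2$. For $\Upsilon^{2,1}$ one applies Poisson to the triple $(x_1,x_2,y_2)$ directly; for $\Upsilon^{2,2}$ one applies Poisson to $(x_1,x_2)$, then Lemma~\ref{le:cheapFLsieve} to $u_2$ (with \eqref{eq:gcdbound1} for the error), then Poisson to $y_2$. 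The main terms are then congruence counts
\[
\#\{(x_1,x_2,y_2)\in(\Z/q\Z)^3:\ bn_1 y_2^3\equiv a(n_2x_1^2-n_1x_2^2)\}
\]
and
\[
\#\{(x_1,x_2,y_2,u_2)\in(\Z/q\Z)^3\times(\Z/q\Z)^\times:\ bn_1 u_2 y_2^3\equiv a(n_2x_1^2-n_1x_2^2)\},
\]
which are identified via the bijection $(x_1,x_2,y_2)\mapsto(u_2^2x_1,u_2^2x_2,u_2y_2)$ for each unit $u_2$. This change-of-variables step is the combinatorial heart of the proof and is missing from your plan; without it the two sides do not match.
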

\begin{proof}
   Expanding the definition of $ \Upsilon_{n_1,n_2}^{2,1}$ we have
   \begin{align*}
        &  \sum_{x_1,x_2} f_1(\tfrac{x_1}{A})f_1(\tfrac{x_2}{A})   \Upsilon_{n_1,n_2}^{2,1}(n_2 x_1^2 - n_1 x_2^2) \\ &= \sum_{d_1} \lambda_{d_1}^W    \sum_{\substack{x_1,x_2,y_1,y_2,u_1 \\ a(n_2 x_1^2 - n_1 x_2^2)= b(n_1y_2^3-  n_2 d_1 u_1 y_1^3) }}  f_1(\tfrac{x_1}{A})f_2(\tfrac{U^{1/3} y_1}{B})F_2(\tfrac{d_1 u_1}{U})f_1(\tfrac{x_2}{A})f_2(\tfrac{y_2}{B}).
   \end{align*}
   Making the variable $u_1$ implicit by substitution we are summing over the congruence
   \begin{align*}
          b n_1 y_2^3 \equiv a (n_2 x_1^2 - n_1 x_2^2) \pmod{b d_1 y_1^3 n_2}.
   \end{align*}
   Since the modulus is $d_1 y_1^3 n_2 \leq X^{1/3-\eta/2},$ an application of Poisson summation formula (Lemma \ref{le:poisson}) to the variables $x_1,x_2,y_2$ produces a main term $\mathcal{M}^{2,1}_{n_1,n_2}$ with an error term $O(X^{-100})$.  Similarly, we have
   \begin{align*}
        &\sum_{x_1,x_2} f_1(\tfrac{x_1}{A}) f_1(\tfrac{x_2}{A})   \Upsilon_{n_1,n_2}^{2,2}(n_2 x_1^2 - n_1 x_2^2) \\ &= \sum_{d_1} \lambda_{d_1}^W    \sum_{\substack{x_1,x_2,y_1 ,y_2,u_1,u_2\\a(n_2 x_1^2 - n_1 x_2^2)= b(n_1 u_2 y_2^3-  n_2 d_1 u_1 y_1^3) }}  f_1(\tfrac{x_1}{A})  f_2(\tfrac{U^{1/3} y_1}{B})F_2(\tfrac{d_1 u_1}{U}) f_1(\tfrac{x_2}{A})f_2(\tfrac{U^{1/3} y_2}{B})F_2(\tfrac{ u_2}{U})\theta_{u_2}^W .
   \end{align*}
 Making the variable $u_1$ implicit by substitution,  we are summing over the congruence
   \begin{align*}
        b n_1 u_2 y_2^3 \equiv a (n_2 x_1^2 - n_1 x_2^2) \pmod{b d_1 y_1^3 n_2}..
   \end{align*}
We apply Poisson summation (Lemma \ref{le:poisson}) to $x_1,x_2$  and Lemma \ref{le:cheapFLsieve} to $u_2$, using  \eqref{eq:gcdbound1} to bound the error term. This produces a main term with the count
\begin{align*}
    \#\{(x_1,x_2,u_2)\in (\Z/d_1 y_1^3 n_2 \Z)^2\times(\Z/d_1 y_1^3 n_2 \Z)^\times:  b n_1 u_2 y_2^3 \equiv a (n_2 x_1^2 - n_1 x_2^2) \pmod{d_1 y_1^3 n_2} \}.
\end{align*}
As a function of $y_2$, this count depends only on $\gcd(y_2^3,d_1 y_1^3 n_2)$. The part where $\gcd(y_2^3,d_1 y_1^3 n_2) >X^{\eta^3}$ is negligible by crude estimates. For $\gcd(y_2^3,d_1 y_1^3 n_2) \leq X^{\eta^3}$ we may apply Poisson summation formula to $y_2$. This produces a main term which matches $\mathcal{M}^{2,1}_{n_1,n_2}$, since 
\begin{align*}
    \#&\{(x_1,x_2,y_2,u_2)\in (\Z/d_1 y_1^3 n_2 \Z)^3\times(\Z/d_1 y_1^3 n_2 \Z)^\times:  b n_1 u_2 y_2^3 \equiv a (n_2 x_1^2 - n_1 x_2^2) \pmod{d_1 y_1^3 n_2} \} \\
    &=   \varphi(d_1 y_1^3 n_2) \#\{(x_1,x_2,y_2)\in (\Z/d_1 y_1^3 n_2 \Z)^3:  b n_1  y_2^3 \equiv a (n_2 x_1^2 - n_1 x_2^2)\pmod{d_1 y_1^3 n_2} \}
\end{align*}
by making the change of variables $(x_1,x_2,y_2) \mapsto (u_2^2 x_1,u_2^2 x_2,u y_2)$.
\end{proof}
\section{Proof of Proposition \ref{prop:typeii1}} \label{sec:typeii1}
\subsection{Application of Cauchy-Schwarz}
Without loss of generality, we may assume that $\beta_n$ is real-valued and insert $n \sim N$ for $N \asymp X/M$. By applying Cauchy-Schwarz, dropping $m\sim M$, and expanding the dispersion, we have
\begin{align*}
      \sum_{m \sim M} \bigg|\sum_{n \sim N}  \beta_n(a^{(1)}_{mn} -a^{(2)}_{mn} ) \bigg| \ll M^{1/2}  \big( \mathcal{U}(1,1)-\mathcal{U}(1,2)-\mathcal{U}(2,1)+ \mathcal{U}(2,2) \big)^{1/2},
\end{align*}
where for $i_1,i_2 \in \{1,2\}$
\begin{align*}
\mathcal{U}(i_1,i_2) =& \sum_{n_1,n_2 \sim N} \beta_{n_1} \beta_{n_2} \sum_{m}  a^{(i_1)}_{mn_1}   a^{(i_2)}_{mn_2} 
= \sum_{n_0 \leq 2N}\sum_{\substack{n_1,n_2 \sim N/n_0 \\\gcd(n_1,n_2)=1 }} \beta_{n_0 n_1} \beta_{n_0 n_2} \mathcal{V}_{n_0}(n_1,n_2)
\end{align*}
with
\begin{align*}
 \mathcal{V}_{n_0}(n_1,n_2) =   \sum_{m}  \sum_{\substack{mn_0 n_1=a x_1^2+ v_1   \\ mn_0n_2 = ax_2^2+v_2 }}  f_1(\tfrac{x_1}{A}) f_1(\tfrac{x_2}{A})  \gamma_{v_1}^{(i_1)}  \gamma_{v_2}^{(i_2)} 
\end{align*}
It then suffices to show that for some $\mathcal{Y}$ we have for $i_1,i_2 \in \{1,2\}$
\begin{align} \label{eq:claimU}
\mathcal{U}(i_1,i_2)  =    \mathcal{Y} + O_C( N X^{2/3}(\log X)^{-C})   
\end{align}
We first bound the diagonal contribution where $n_1=n_2=1$.
\subsection{Contribution from the diagonal $n_1=n_2=1$ }
By using the divisor bound
\begin{align*}
   \sum_{n} |\beta_n|^2\mathcal{V}_n(1,1)
&\ll  X^{o(1)}\sum_{v_1,v_2} |\gamma_{v_1}^{(i_1)}  \gamma_{v_2}^{(i_2)}| \sum_{\substack{x_1,x_2 \\ a(x_1^2-x_2^2) = v_2 -v_1 }}f_1(\tfrac{x_1}{A}) f_1(\tfrac{x_2}{A})  \\
  &\ll  X^{o(1)} (A B + B^2) \ll N X^{2/3-\eta}
\end{align*}
by using $N \geq X^{1/6+\eta}$, where $x_1= \pm x_2$ contributed $\ll X^{o(1)} AB$ and $x_1\neq \pm x_2$ contributed $\ll X^{o(1)} B^2$ by a divisor bound for the number of representations as $x_1^2-x_2^2  = (x_1-x_2)(x_1+x_2)$.
\subsection{Contribution from $n_0=1$}
This will give the main term, we postpone bounding the contribution from the pseudo-diagonal terms $n_0 > 1$ (where $n_0 \geq W$ by $\gcd(n,P(W))=1$) to Section \ref{sec:n0contribution}. For $n_0=1$ we want to evaluate
\begin{align*}
    \mathcal{U}_1(i_1,i_2) 
=& \sum_{\substack{n_1,n_2 \sim N \\\gcd(n_1,n_2)=1}} \beta_{ n_1} \beta_{ n_2} \mathcal{V}_{1}(n_1,n_2)
\end{align*}
where
\begin{align*}
    \mathcal{V}_{1}(n_1,n_2) &=\sum_{m}  \sum_{\substack{m n_1=a x_1^2+ v_1   \\ mn_2 = ax_2^2+v_2}}  f_1(\tfrac{x_1}{A}) f_1(\tfrac{x_2}{A})  \gamma_{v_1}^{(i_1)}  \gamma_{v_2}^{(i_2)}  = \sum_{\substack{x_1,x_2,v_1,v_2 \\  n_2(ax_1^2+v_1 ) = n_1(ax_2^2+v_2)}} 
 f_1(\tfrac{x_1}{A})f_1(\tfrac{x_2}{A})\gamma_{v_1}^{(i_1)}  \gamma_{v_2}^{(i_2)} .
\end{align*}
We have $n_2 x_1^2 \neq n_1 x_2^2$ since $n_1,n_2$ are square-free and coprime. We rearrange the equation to  $ a (n_2 x_1^2-n_1 x_2^2) = n_1 v_2-n_2 v_1$
and obtain
\begin{align*}
      \mathcal{U}_1(i_1,i_2)  = \sum_{\substack{n_1,n_2 \sim N \\\gcd(n_1,n_2)=1}} \beta_{ n_1} \beta_{ n_2}   \sum_{x_1,x_2} f_1(\tfrac{x_1}{A})  f_1(\tfrac{x_2}{A})   \Upsilon_{n_1,n_2}^{i_1,i_2}(n_2 x_1^2 - n_1 x_2^2). 
\end{align*}
It follows from Lemma \ref{le:12vs22} that
\begin{align*}
    \mathcal{U}_1(1,2), \mathcal{U}_1(2,1) =  \mathcal{U}_{1}(2,2) + O(NX^{2/3} (\log X)^{-C}).
\end{align*}
Therefore, the current task is reduced to showing that
\begin{align} \label{eq:1122claim}
  |\mathcal{U}_{1}(1,1) - \mathcal{U}_{1}(2,2)|  \ll_C  NX^{2/3} (\log X)^{-C}.
\end{align}
We partition according to the sign of $n_2 x_1^2-n_1 x_2^2$ and separate the part where it is small. For $\delta_1 = (\log X)^{-C_1}$ with $C_1 > 0$ large we let $F_0(r)$ be a 1-bounded smooth even function with $F_0(r)=1$ for $|r| \leq \delta_1 NX$ and supported on $|r| \leq 2\delta_1 N.$ We define non-negative functions $F_{\epsilon}(r)$ by $F_{\epsilon}(r)^2 = (1-F_0(r)^2) \mathbf{1}_{\operatorname{sgn}(r) = \epsilon}$. We insert the smooth partition of unity
\begin{align*}
    1 =  \sum_{\epsilon \in \{\pm,0\}} F_{\epsilon}(r)^2, \quad \quad
    \mathcal{U}_1(j,j) = \sum_{\epsilon \in \{\pm,0\}}    \mathcal{U}_\epsilon(j),
\end{align*}
where for $\epsilon \in \{\pm,0\}$
\begin{align*}
  \mathcal{U}_\epsilon(j) &=   \sum_{\substack{n_1,n_2 \sim N \\\gcd(n_1,n_2)=1}} \beta_{ n_1} \beta_{ n_2} \sum_{m} F_{\epsilon}(m) \Upsilon^{j,j}_{n_1,n_2}(m) \mathcal{Q}_{\epsilon,n_1,n_2}(m), \\
\mathcal{Q}_{\epsilon,n_1,n_2}(m) &= \sum_{\substack{x_1,x_2 \\    a (n_2 x_1^2-n_1 x_2^2) = m}}  F_{\epsilon} (a(n_2 x_1^2-n_1 x_2^2)) 
f_1(\tfrac{x_1}{A})f_1(\tfrac{x_2}{A}). 
\end{align*}
We will bound the contribution from $\epsilon=0$ in  Section \ref{sec:smallcontribution}. For a fixed  $\sgn(m)=\epsilon$, $\epsilon \in \{\pm\}$ and $a|m$ we have 
\begin{align*}
     \mathcal{Q}_{\epsilon,n_1,n_2}(m) &= \sum_{\substack{x_1,x_2 \\     |n_2 x_1^2-n_1 x_2^2| = |m/a|}}  F_{\epsilon} (a(n_2 x_1^2-n_1 x_2^2)) 
f_1(\tfrac{x_1}{A})f_1(\tfrac{x_2}{A}).
\end{align*}

\subsection{Evaluation of  $\mathcal{U}_{\epsilon}(1,1)$ and $\mathcal{U}_{\epsilon}(2,2)$}

Applying Lemma \ref{le:binaryrep} with $m= |m/a|$, $Z=AN$, and the smooth function $F(x_1,x_2)=F_{\epsilon,n_1,n_2}(x_1,x_2)$ supported on $(x_1,x_2) \in [1/4,4]^2$ defined by
\begin{align*}
       F_{\epsilon,n_1,n_2}(x_1,x_2) := F_{\epsilon} \Bigl(\tfrac{a A^2N^2}{2n_2}(  x_1^2-x_2^2) \Bigr)   f_1(\tfrac{x_1 N}{2n_2})  f_1(\tfrac{x_2 N}{\sqrt{4n_1n_2}}),
\end{align*}
we obtain for $\epsilon \in \{\pm\}$
\begin{align*}
    \mathcal{U}_\epsilon(j) =  \mathcal{M}_\epsilon(j) + \mathcal{E}_\epsilon(j) + O_\eta(X^{-100}),
\end{align*}
where for $T=X^{1/3-\eta/2}$
\begin{align*}
 \mathcal{M}_\epsilon(j) &=  \frac{1}{2\pi } \int_{|t| \leq X^{\eta^3}}\sum_{\substack{n_1,n_2 \sim N \\\gcd(n_1,n_2)=1}}\frac{\beta_{ n_1} \beta_{ n_2} (\tfrac{a}{2n_2})^{it/2}(AN)^{it}\check{F}_{\epsilon,n_1,n_2}(0,it)}{h(4 n_1 n_2) R_{4n_1n_2}} \mathcal{C}_{\epsilon,n_1,n_2,t}(j) \d t  ,\\
\mathcal{C}_{\epsilon,n_1,n_2,t}(j)  &:=  \sum_{\substack{m}}  F_{\epsilon}(m)\Upsilon_{n_1,n_2}^{j,j}(m)\lambda_{1}^\sharp ( |m/a|,T) |m|^{-it/2} 
\end{align*}
and
\begin{align*}
     \mathcal{E}_\epsilon(j) = & \frac{1}{2\pi } \int_{|t| \leq X^{\eta^3}}\sum_{\substack{n_1,n_2 \sim N \\\gcd(n_1,n_2)=1}}\frac{\beta_{ n_1} \beta_{ n_2} a^{it/2}(AN)^{it}}{h(4 n_1 n_2) R_{4n_1n_2}}  
 \\
 & \hspace{50pt}\times \sum_{|\ell| \leq X^{\eta^3} R_{4n_1n_2}} \sum_{\chi \in \widehat{\CC(d)}}  \check{F}_{\epsilon,n_1,n_2}(\ell,it)\psi_{\chi \xi^{\ell} ,s}(\mathfrak{n}_2) \mathcal{C}^\flat_{\epsilon,n_1,n_2,t}(j,\chi \xi^{\ell} )\d t  ,\\
\mathcal{C}^\flat_{\epsilon,n_1,n_2,t}(j,\chi \xi^{\ell} ) := & \sum_{\substack{m}}  F_{\epsilon}(m)\Upsilon_{n_1,n_2}^{j,j}(m) \lambda_{\chi \xi^{\ell} }^\flat ( |m/a|) |m|^{-it/2}. 
\end{align*}
Note that by \eqref{eq:Fdecay} we have the trivial bound 
\begin{align*}
    \int_{|t| \leq X^{\eta^3}}|\check{F}_{\epsilon,n_1,n_2}(\ell,it)| \ll (\log X)^{O(1)}.
\end{align*}
By the class number formula \eqref{eq:classnumberformula}, Cauchy-Schwarz, and Lemma \ref{le:Linverse}, we have for $j \in \{1,2\}$
\begin{align*}
     \mathcal{M}_\epsilon(j)  &=  \mathcal{M}_\epsilon(j,K)  + O( N X^{2/3-\eta^3}),  \quad K:=X^{\eta^2},\\
     \mathcal{M}_\epsilon(j,K) &=  \frac{1}{2\pi } \int_{|t| \leq X^{\eta^3}}\sum_{\substack{n_1,n_2 \sim N \\\gcd(n_1,n_2)=1}}\frac{ \beta_{ n_1} \beta_{ n_2} a^{it/2}(AN)^{it}\check{F}_{\epsilon,n_1,n_2}(0,it)}{\sqrt{n_1 n_2}} \sum_{k \leq K} \frac{\mu(k) (\tfrac{4n_1 n_2}{k})}{k} \mathcal{C}_{\epsilon,n_1,n_2,t}(j) \d t.
\end{align*}

Recall that the aim is to show \eqref{eq:1122claim}. To this end, in the following subsections we will bound the error terms $E_\epsilon(j) \ll N X^{2/3-\eta^2}$ and show that the main terms match up to a negligible error term $|\mathcal{M}_\epsilon(1,K)  -\mathcal{M}_\epsilon(2,K)| \ll_C  N X^{2/3} (\log X)^{-C}$.  

\subsection{Bounding $\mathcal{E}_\epsilon(1)$}
We invoke Conjecture $\mathrm{C}_a(\eps)$ with the binary cubic form $C(X,Y) = b n_1 X^3- bn_2 Y^3$ to get for $\chi \xi^{\ell} \neq 1$ by summation by parts
\begin{align*}
\mathcal{C}^\flat_{\epsilon,n_1,n_2,t}(1,\chi \xi^{\ell} ) \ll X^{O(\eta^2)}B^{1+2\eps} \ll X^{1/3+2\eps/3+O(\eta^2)}.
\end{align*}
and for $\chi \xi^{\ell} = 1$
\begin{align*}
\mathcal{C}^\flat_{\epsilon,n_1,n_2,t}(1,1) \ll X^{2/3-\eta/4+O(\eta^2)}.
\end{align*}
Therefore, by the condition $N \leq X^{1/3-2\eps/3-\eta}$ we get 
\begin{align*}
    \mathcal{E}_\epsilon(1)  \ll N^2 X^{1/3+2\eps/3+O(\eta^2)}  + N X^{2/3-\eta/4+O(\eta^2)}\ll N X^{2/3-\eta/4},
\end{align*}
\begin{remark} \label{remark:largesieve}
    Conjecture $\mathrm{C}_a(\eps)$  may be replaced by the following large sieve type bound, which saves a factor of $N^{1+2\eta}$ over the trivial bound. \\
    
   \noindent \textbf{Conjecture $\mathrm{L}_{a,b}(\eps)$.} For any $\sqrt{B_1 + B_2} < N \leq (B_1 + B_2)^{1-2\eps}$ we have for some $\eta>0$
    \begin{align*}
 \sum_{\substack{n_1,n_2 \leq N  \\
n_1n_2 \, \text{square-free odd}}} \sum_{\chi \in \widehat{\CC(4n_1n_2)}} \sum_{|\ell| \leq N^{\eta} R_{4n_1n_2}}  &\bigg| \sum_{\substack{y_1 \leq B_1, \, y_2 \leq B_2 \\ n_1y_2^3  \equiv n_2 y_1^3 \pmod{a}}} \lambda^\flat_{\chi \xi^{\ell} }(\tfrac{b}{a}|n_1y_2^3 -n_2 y_1^3|)\bigg| \ll  N^{2-\eta} (B_1+B_2)^2.
\end{align*} 
\end{remark}
\subsection{Bounding $\mathcal{E}_\epsilon(2)$}
Similar to the previous section, it suffices to show that for $\chi \xi^{\ell} \neq 1$ \begin{align} \label{eq:claimC12}
 \mathcal{C}^\flat_{\epsilon,n_1,n_2,t}(2,\chi \xi^{\ell} )  \ll X^{1/3+2\eps/3+O(\eta^2)}.
\end{align}
and for $\chi \xi^{\ell} = 1$
\begin{align} \label{eq:claimC12flat}
\mathcal{C}^\flat_{\epsilon,n_1,n_2,t}(2,1)\ll X^{2/3-\eta/4+O(\eta^2)}.
\end{align}
The bound \eqref{eq:claimC12} follows from combining Lemma \ref{le:22smooth} and Lemma \ref{le:smooothlambda}. The bound \eqref{eq:claimC12flat} follows from  combining Lemma \ref{le:22smooth} and Lemma \ref{le:lambdaflatsmooth} using $T=X^{1/3-\eta/2}$.
\subsection{Evaluation of $\mathcal{M}_\epsilon(2,K)$} \label{sec:m2eval}
Denote $F_{\epsilon,t}(m)= F_{\epsilon}(m)|m|^{-it/2}$. We have 
\begin{align*}
 \mathcal{C}_{\epsilon,n_1,n_2,t}(2)   =\sum_{c \leq T} (\tfrac{4n_1n_2}{c})&  \sum_{n_1 v_2 \equiv n_2v_1 \pmod{ac}}F_{\epsilon,t}(n_1 v_2 - n_2v_1 ) \gamma^{(2)}_{v_1} \gamma^{(2)}_{v_2}\\
 =   \sum_{c \leq T} (\tfrac{4n_1n_2}{c}) & \sum_{y_1,y_2} f_2(\tfrac{U^{1/3} y_1}{B})f_2(\tfrac{U^{1/3} y_2}{B})  \\
 & \times\sum_{\substack{u_1,u_2 \\  bn_1 y_2^3  u_2 \equiv  bn_2 y_1^3  u_1 \pmod{ac }}} F_2(\tfrac{ u_1}{U})  F_2(\tfrac{u_2}{U}) F_{\epsilon,t}(n_1 u_2y_2^3 - n_2 u_1y_1^3 )\theta_{u_1}^W \theta_{u_2}^W.
\end{align*}
Since the variables $u_j$ are localized to $U(1+O(X^{-\eta^2})$  and $|t| \leq X^{\eta^3}$, we may replace
\begin{align*}
  F_{\epsilon,t}(n_1 u_2y_2^3 - n_2 u_1y_1^3 ) \mapsto  F_{\epsilon,t}( U(n_1 y_2^3 - n_2 y_1^3)) 
\end{align*}
with a negligible error term. We have $\gcd(n_1n_2,ac)=1$ due to $\gcd(n_1n_2,P(W))=1$ and $(\tfrac{4n_1n_2}{c})$. Applying Lemma \ref{le:cheapFLsieve} twice with \eqref{eq:gcdbound2},  we see that $ \mathcal{C}_{\epsilon,n_1,n_2,t}(2)$ is up to negligible error terms equal to
\begin{align*}
  &U^{2/3}  \sum_{c \leq T} (\tfrac{4n_1n_2}{c}) \sum_{y_1,y_2} f_2(\tfrac{U^{1/3} y_1}{B})f_2(\tfrac{U^{1/3} y_2}{B}) F_{\epsilon,t}( U(n_1 y_2^3 - n_2 y_1^3))  \\
  & \hspace{100pt} \times \frac{\#\{u \in (\Z/ac\Z)^\times:  bn_1 y_2^3 u \equiv  bn_2 y_1^3 \pmod{ac }\}}{\varphi(ac)}.  
\end{align*}
The number of solutions for $u$ only depends on $\gcd(by_2^3,by_1^3,ac)$. The contribution from the part where $\gcd(y_1^3y_2^3,ac) \geq X^{\eta^3}$ is negligible by trivial bounds. For $\gcd(y_1^3y_2^3,ac) < X^{\eta^3}$ applying Poisson summation (Lemma \ref{le:poisson}) to the variables $y_1,y_2$ produces the main term
\begin{align*}
 B^2 \mathcal{I}_{\epsilon,t}(n_1,n_2) &\sum_{c \leq T} (\tfrac{4n_1n_2}{c}) \sum_{y_1,y_2 \in \Z/ac \Z}   \frac{\#\{u \in (\Z/ac\Z)^\times:  bn_1 y_2^3 u \equiv  bn_2 y_1^3 \pmod{ac }\}}{(ac)^2\varphi(ac)} , \\
 \mathcal{I}_{\epsilon,t}(n_1,n_2) &:=   \int_{\R^2} f_2(y_1) f_2(y_2)  F_{\epsilon,t}(B^3(n_1 y_2^3 - n_2 y_1^3 ))  \d y_1 \d y_2 \\
 &= (n_1n_2)^{-1/3}\int_{\R^2} f_2(\tfrac{y_1}{n_2^{1/3}}) f_2(\tfrac{y_2}{n_1^{1/3}})  F_{\epsilon,t}(B^3( y_2^3 - y_1^3 ))  \d y_1 \d y_2.
\end{align*}
We have $\gcd(n_1n_2,ac)=1$ for $\gcd(n_1n_2,P(W))=1$ and $(\tfrac{4n_1n_2}{c}) \neq 0$. Let $\mathrm{g}:=\gcd(by_2^3,ac)=\gcd(by_1^3,ac)$. Expanding into Dirichlet characters modulo $\tfrac{ac}{\mathrm{g}}$, the sum over $u$ picks out the contribution from the principal character, so that the last expression is equal to
\begin{align*}
 B^2 \mathcal{I}_{\epsilon,t}(n_1,n_2) &\sum_{c \leq T} (\tfrac{4n_1n_2}{c}) \sum_{y_1,y_2 \in \Z/ac \Z}   \frac{1}{(ac)^2 \varphi(\tfrac{ac}{\mathrm{g}})} ,
 \end{align*}
Therefore, we obtain
\begin{align} 
\label{eq:m2main}
\begin{split}
     \mathcal{M}_\epsilon(2,K) &= B^2 \sum_{\substack{n_1,n_2 \sim N \\\gcd(n_1,n_2)=1}}\frac{\beta_{ n_1} \beta_{ n_2} \mathcal{J}_\epsilon(n_1,n_2)}{\sqrt{n_1n_2}} \sum_{k\leq K}\frac{\mu(k) }{k}  \sum_{c \leq T} (\tfrac{4n_1n_2}{c k}) \\
    &\hspace{50pt} \times \sum_{\substack{y_1,y_2 \in \Z/ac \Z \\ \gcd( by_2^3,ac) =  \gcd(by_1^3,ac) }}   \frac{1}{(ac)^2 \varphi(\tfrac{ac}{\mathrm{g}})}  +O(NX^{2/3}(\log X)^{-C}),\\
    \mathcal{J}_\epsilon(n_1,n_2) &:= \frac{1}{2\pi}\int_{\R} (\tfrac{a}{2n_2})^{it/2}(AN)^{it}\check{F}_{\epsilon,n_1,n_2}(0,it) \mathcal{I}_{\epsilon,t}(n_1,n_2) \d t.
\end{split}
\end{align}
Note that, by differentiation under integration, the weight $   \mathcal{J}_\epsilon(n_1,n_2)$ satisfies for all $J_1,J_2 \geq 0$
\begin{align} \label{eq:Jsigmaderivbound}
    \partial_{w_1}^{J_1}   \partial_{w_2}^{J_2} \mathcal{J}_\epsilon(w_1,w_2) \ll_{J_1,J_2} |w_1|^{-J_1}|w_2|^{-J_2} (\log X)^{O(J_1+J_2)}
\end{align}

\subsection{Evaluation of $\mathcal{M}_\epsilon(1,K)$}
We have 
\begin{align*}
 \mathcal{C}_{\epsilon,n_1,n_2,t}(1)   &=\sum_{c \leq T} (\tfrac{4n_1n_2}{c})  \sum_{n_1 v_2 \equiv n_2v_1 \pmod{ac}}F_{\epsilon,t}(n_1 v_2 - n_2v_1 ) \gamma^{(1)}_{v_1} \gamma^{(1)}_{v_2}\\
& =   \sum_{c \leq T} (\tfrac{4n_1n_2}{c})  \sum_{\substack{y_1,y_2 \\  bn_1 y_2^3 \equiv  bn_2 y_1^3 \pmod{ac }}}  f_2(\tfrac{ y_1}{B})f_2(\tfrac{y_2}{B})  F_{\epsilon,t}(n_1 y_2^3 - n_2 y_1^3 ).
\end{align*}
Applying the Poisson summation formula (Lemma \ref{le:poisson}) to $y_1,y_2$ we get 
\begin{align*}
    \mathcal{C}_{\epsilon,n_1,n_2,t}(1)  =  B^2 \mathcal{I}_{\epsilon,t}(n_1,n_2) \sum_{c \leq T} (\tfrac{4n_1n_2}{c})  \frac{\#\{(y_1,y_2) \in (\Z/ac\Z)^2:bn_1 y_2^3 \equiv  bn_2 y_1^3 \pmod{ac }  \}}{{(ac)^2}}
  + O(X^{-100}).
\end{align*}
We have $\gcd(n_1n_2,ac)=1$ for $\gcd(n_1n_2,P(W))=1$ and $(\tfrac{4n_1n_2}{c}) \neq 0$. Expanding into Dirichlet characters modulo $\tfrac{ac}{\mathrm{g}}$ with $\mathrm{g} = \gcd( by_2^3,ac) =  \gcd(by_1^3,ac)$ we have
\begin{align*}
    &\#\{(y_1,y_2) \in (\Z/ac\Z)^2:bn_1 y_2^3 \equiv  bn_2 y_1^3 \pmod{ac }  \}  \\
    &=\sum_{\substack{y_1,y_2 \in \Z/ac\Z  \\ \gcd( by_2^3,ac) =  \gcd(by_1^3,ac)  }}  \frac{1}{\varphi(\tfrac{ac}{\mathrm{g}})} \sum_{\substack{\chi \pmod{\tfrac{ac}{ \mathrm{g}}} }} \chi(\tfrac{by_2^3}{\mathrm{g}})  \overline{\chi}(\tfrac{by_1^3}{\mathrm{g}})  \chi(n_1) \overline{\chi}(n_2).
\end{align*}
The contribution from the principal character $\chi=\chi_0$ matches exactly the main term in \eqref{eq:m2main}. Therefore, to show \eqref{eq:1122claim}, it remains to show that the error term from $\chi \neq \chi_0$
\begin{align*}
    \mathcal{L}_\epsilon(1) =  \sum_{k\leq K}\frac{\mu(k)}{k}  \sum_{c \leq T}\sum_{\substack{y_1,y_2 \in \Z/ac\Z  \\ \gcd( by_2^3,ac) =  \gcd(by_1^3,ac) }}   & \frac{1}{(ac)^2 \varphi(\tfrac{ac}{\mathrm{g}})}\sum_{\substack{\chi \pmod{\tfrac{ac}{ \mathrm{g}}} \\ \chi \neq \chi_0 }} \chi(\tfrac{by_2^3}{\mathrm{g}})  \overline{\chi}(\tfrac{by_1^3}{\mathrm{g}})  \\
   & \times\sum_{\substack{n_1,n_2 \sim N \\\gcd(n_1,n_2)=1}}\frac{\beta_{ n_1} \beta_{ n_2} \mathcal{J}_\epsilon(n_1,n_2)}{\sqrt{n_1n_2}}  \chi(n_1) \overline{\chi}(n_2) (\tfrac{4n_1n_2}{ck})
\end{align*}
satisfies
\begin{align} \label{eq:Lsigma1claim}
    \mathcal{L}_\epsilon(1) \ll_C \frac{N}{(\log X)^C} .
\end{align}
\begin{remark}
With more work, $T$ could be increased to $B^{2-\eta^2}$. Indeed, applying the Poisson summation formula the exponential sums may be expressed in terms of Ramanujan sums which have almost complete cancellation. We leave the details to the interested reader since in any case our treatment of $\chi \xi^{\ell} = 1$ is conditional. 
\end{remark}
\subsection{Bounding $\mathcal{L}_\epsilon(1)$}
We denote $b_0=\gcd(b,c)$ and 
\begin{align*}
    \mathrm{g} = \gcd( by_2^3, by_1^3,ac) = b_0 \gcd(y_2^3,y_1^3,\frac{ac}{b_0}) =: b_0 \mathrm{g}_1.
\end{align*}
 Making the change of variables $c \mapsto b_0c$ we get
\begin{align*}
    \mathcal{L}_\epsilon(1) =  \sum_{b_0|b} \sum_{k\leq K}\frac{\mu(k)}{k}  \sum_{\substack{c \leq T/b_0 \\ \gcd(c,\frac{b}{b_0})=1}}&\sum_{\substack{y_1,y_2 \in \Z/acb_0\Z  \\ \gcd( y_2^3,ac) =  \gcd(y_1^3,ac)= \g_1 }}   \frac{1}{(acb_0)^2 \varphi(\tfrac{ac}{\mathrm{g}_1})} \sum_{\substack{\chi \pmod{\tfrac{ac}{ \g_1}}   \\ \chi \neq \chi_0 }} \chi(\tfrac{y_2^3}{\mathrm{g}_1})  \overline{\chi}(\tfrac{y_1^3}{\mathrm{g}_1})    \\
    & \times\sum_{\substack{n_1,n_2 \sim N \\\gcd(n_1,n_2)=1}}\frac{\beta_{ n_1} \beta_{ n_2} \mathcal{J}_\epsilon(n_1,n_2)}{\sqrt{n_1n_2}}  \chi(n_1) \overline{\chi}(n_2) (\tfrac{4n_1n_2}{ck}).
\end{align*}
We now show that the sum over $y_j$ vanishes unless $\chi$ is a cubic character. By induction on $k$ with $p^k || \g_1$ we see that either $p^k||ac$ or $3|k$, that is, for some integers
$\mathrm{g}_2,\mathrm{g}_3 $  we have 
\begin{align*}
   \mathrm{g}_1 = \mathrm{g}_2 \mathrm{g}_3^3 \quad \text{with} \quad \gcd(\g_2,\tfrac{ac}{\g_2})=1.
\end{align*}
For any fixed $\g_2,\g_3,$  and fixed $\chi \pmod{\tfrac{ac}{\g_2 \g_3^3}}$, we can take the sum over $y_1,y_2$ inside and we have, by the change of variables $y_j \mapsto y_j \g_2 \g_3$, recalling that $\gcd(\g_2,\frac{ac}{\g_2})=1$,
\begin{align*}
  \sum_{\substack{y_1,y_2 \in \Z/ac b_0 \Z \\ 
 (y_1^3, ac) =  (y_2^3, ac) =  \mathrm{g}_2\g_3^3 }} \chi(\tfrac{y_2^3}{\mathrm{g}_2\g_3^3})  \overline{\chi}(\tfrac{y_1^3}{\g_2\g_3^3}) =     \sum_{\substack{y_1,y_2 \in \Z/\tfrac{ac b_0}{\g_2 \g_3} \Z  }} \chi(y_2^3)  \overline{\chi}(y_1^3).
\end{align*}
The sum over $y_1,y_2$ vanishes unless $\chi^3 = \chi_0$ and we get
\begin{align*}
    \mathcal{L}_\epsilon(1) =    \sum_{k \leq K} \frac{\mu(k)}{k}\sum_{b_0|b} \sum_{\substack{c \leq T/b_0 \\ \gcd(c,\frac{b}{b_0})=1}}&\sum_{\substack{y_1,y_2 \in \Z/ac b_0 \Z  \\ \gcd( y_2^3,ac) =  \gcd(y_1^3,ac) = \g_1 }}   \frac{1}{(ac)^2 \varphi(\tfrac{ac}{\g_1})}  \sum_{\substack{\chi  \pmod{\tfrac{ac}{ \mathrm{g}_1}} \\ \chi^3 = \chi_0   \\ \chi \neq \chi_0 }}  \\
    & \times \sum_{\substack{n_1,n_2 \sim N \\\gcd(n_1,n_2)=1}}\frac{\beta_{ n_1} \beta_{ n_2} \mathcal{J}_\epsilon(n_1,n_2)}{\sqrt{n_1n_2}}  \chi(n_1) \overline{\chi}(n_2) (\tfrac{4n_1n_2}{b_0 ck}).
\end{align*}
We use summation by parts to $n_1,n_2$ with \eqref{eq:Jsigmaderivbound} to relax the smooth weight $\mathcal{J}_\epsilon(n_1,n_2)/\sqrt{n_1n_2}$. We have for any $\g=\g_2 \g_3^3| q$, $\gcd(\g_2,\tfrac{q}{\g_2}) =1$
\begin{align*}
\sum_{\substack{y_1,y_2 \in \Z/q\Z  \\ \gcd( y_2^3,q) =  \gcd(y_1^3,q) = \g }}  \frac{1}{q^2 \varphi(\tfrac{q}{\g})}&\leq  d(q) \sum_{\substack{y \in \Z/q\Z  \\ \gcd( y^3,q) = \g }}  \frac{\g_3^2}{ q^2} \leq  d(q) \frac{\g_3}{q \g_2} 
\end{align*}
We glue together $a,b_0,c,k$ to $q=ab_0 ck$ and make the change of variables $ q\mapsto \g_2 \g_3^3 q$. Thus, to prove \eqref{eq:Lsigma1claim}, it suffices to show that
\begin{align*}
    \mathcal{S}(N_1,N_2) :=  \sum_{\g_2} 
\sum_{\g_3} \frac{d(\g_2 \g_3)^{O(1)}}{\g_2^2 \g_3^2} \sum_{q < ab K T / \g_2 \g_3^3} \frac{d(q)^{O(1)}}{q}  &\sum_{\substack{\chi  \pmod{q} \\ \chi^3 = \chi_0   \\ \chi \neq \chi_0 }}   \bigg|\sum_{\substack{n_1 \leq N_1 \\ n_2 \leq N_2 \\\gcd(n_1,n_2)=1}}\beta_{ n_1} \beta_{ n_2} \chi(n_1) \overline{\chi}(n_2) (\tfrac{4n_1n_2}{q})\bigg|
\end{align*}
satisfies for any $N_1,N_2 \leq 2N$  and any $C>0$
\begin{align} \label{eq:largesieveclaim}
        \mathcal{S}(N_1,N_2)  \ll_C \frac{N^2}{(\log X)^C}.
\end{align}
We now drop the condition $\gcd(n_1,n_2)=1$, which gives an error term $\ll \frac{N^2}{W^{1/2}}$ since $\gcd(n_1 n_2, P(W))=1$. We can remove the divisor function $d(q)^{O(1)}$ by applying Cauchy-Schwarz 
\begin{align*}
    \sum_{q} \frac{d(q)^{O(1)}}{q}\bigg|\sum_{n_1,n_2}\bigg| \leq \bigg(     \sum_{q} \frac{d(q)^{O(1)}}{q}\bigg|\sum_{n_1}\bigg|^2\bigg)^{1/2} \bigg(     \sum_{q} \frac{1}{q}\bigg|\sum_{n_2}\bigg|^2\bigg)^{1/2} \leq N(\log X)^{O(1)}\bigg(     \sum_{q} \frac{1}{q}\bigg|\sum_{n_2}\bigg|^2\bigg)^{1/2}.
\end{align*}
Therefore, it suffices to show that for any $C >0$ and $N' \leq 2N$
\begin{align*}
\mathcal{S}(N') := 
 \sum_{q < ab K T} \frac{1}{q}&\sum_{\substack{\chi  \pmod{q} \\ \chi^6 = \chi_0   \\ \chi \neq \chi_0 }}\bigg|\sum_{\substack{n \leq N' }}\beta_{n}  \chi(n) \bigg|^2 \ll_C \frac{N^2}{(\log X)^C},
\end{align*}
Partitioning according to $\cond(\chi) =r$ and using $\gcd(n,P(W))=1$ to bound the contribution from $\gcd( \frac{q}{r},n) > 1$, we get
\begin{align*}
      \mathcal{S}(N') &\ll_C   (\log X)^2\mathcal{S}^\ast(N')  + \frac{N^2}{(\log X)^C}, \\
      \mathcal{S}^\ast(N') &:=  \max_{\substack{R \leq 2a  b TK}} \frac{1}{R} \sum_{r \sim R} 
\sum_{\substack{\chi  \pmod{r}^\ast \\ \chi^6 = \chi_0   \\ \chi \neq \chi_0 }}\bigg|\sum_{\substack{n \leq N' }}\beta_{n}  \chi(n)\bigg|^2.
\end{align*}
We let $C_2 > 0$ be large compared to $C$ and split into three cases depending on the size of $R$.

For $R\leq (\log X)^{C_2}$ we have  $ \mathcal{S}^\ast(N') \ll_C N^2 (\log X)^{-C}$ by the Siegel-Walfisz property \eqref{eq:betacondition} since $\chi \neq \chi_0$.

For $(\log X)^{C_2}< R \leq N^{1/3}$ we get by the duality principle and the Poisson summation formula (similar to the proof of Lemma \ref{le:Linverse}), using $N> X^{\eta} R^2$, that
\begin{align*}
    \mathcal{S}^\ast(N') \ll \frac{N^2}{R^2}  \sum_{r_1,r_2 \sim R} \sum_{\substack{\chi_j  \pmod{r_j}^\ast \\ \chi_j^6 = \chi_0   \\ \chi_j \neq \chi_0  \\  \chi_1=\chi_2}} 1 + O(X^{-100}) \ll  \frac{N^2}{R^2} \sum_{r \sim R} d(r)^{O(1)} \ll_C \frac{N^2}{(\log N)^C}
\end{align*}
 once $C_2$ is sufficiently large in terms of $C$.

For $N^{1/3} < R \leq 2a b T K $ we apply the large sieve for sextic characters due to Baier and Young \cite[Theorem 1.5]{baieryoungcubic} to show that
\begin{align*}
   \mathcal{S}_2^\ast(N',R) \ll \frac{X^{o(1)}}{R} (R^{4/3} + R^{1/2} N) N \ll N^{2-\eta/2}
\end{align*}
since $R \ll TK \ll N^{3-3\eta}$. This completes the proof of \eqref{eq:Lsigma1claim} and thus the proof of \eqref{eq:1122claim}. To complete the proof of Proposition \ref{prop:typeii1} it remains to bound the contributions from $|n_2 x_1^2-n_1 x_2^2| \leq 2 \delta_1$ and $n_0 > 1$.
\subsection{Contribution from $|n_2 x_1^2-n_1 x_2^2| \leq 2 \delta_1 NX$} \label{sec:smallcontribution}
By the triangle inequality, we  have
\begin{align*}
  \mathcal{U}_0(j) & \ll   \sum_{\substack{n_1,n_2 \sim N \\\gcd(n_1,n_2)=1}} |\beta_{ n_1} \beta_{ n_2}| \sum_{m \neq 0} F_{0}(m)^2 \Upsilon^{j,j}_{n_1,n_2}(m) \mathcal{Q}_{0,n_1,n_2}(m), \\
\mathcal{Q}_{0,n_1,n_2}(m) &= \sum_{\substack{x_1,x_2 \\    |a (n_2 x_1^2-n_1 x_2^2)| = |m|}} 
f_1(\tfrac{x_1}{A})f_1(\tfrac{x_2}{A}),
\end{align*}
For $m \in \Z_{\neq 0}$ we can majorize the weight appearing in $\mathcal{Q}_{0,n_1,n_2}(m)$ by
\begin{align*}
f_1(\tfrac{x_1}{A})f_1(\tfrac{x_2}{A}) \leq    f_0(\log| \tfrac{x_12 n_2 + x_2\sqrt{4 n_1 n_2}}{x_1 2 n_2-x_2\sqrt{4 n_1 n_2} }|)  .
\end{align*}
for a smooth bump function $f_0:\R\to \C$ supported in $[-2\log X,2\log X]$ and satisfying $f^{(J)} \ll_J 1$. Then by similar arguments as above, using Lemma \ref{le:binaryrepangle} in place of Lemma \ref{le:binaryrep}, assuming Conjecture $\mathrm{C}_{a}(\eps)$ we have
\begin{align*}
    \mathcal{U}_0(j) \ll \mathcal{M}_0(j) + NX^{2/3-\eta/4}.
\end{align*}
We then drop the condition that $m \neq 0$ and apply Poisson summation to $y_1,y_2$, which captures a factor $\delta_1$ from the support of the weight $F_0(m)$. We then bound trivially the number of solutions 
\begin{align*}
    \#\{(y_1,y_2) \in (\Z/ac\Z)^2:bn_1 y_2^3 \equiv  bn_2 y_1^3 \pmod{ac } \} \leq ac  \gcd(bn_1n_2,ac) 
 d(ac)^{O(1)} 
\end{align*}
instead of expanding into Dirichlet characters. Taking $\delta_1 = (\log X)^{-C_1}$ with $C_1$ large in terms of $C,$ we obtain
\begin{align*}
    \mathcal{M}_0(j) \ll \delta_1 N X^{2/3} (\log X)^{O(1)} \ll_C \frac{N X^{2/3}}{(\log X)^C}.
\end{align*}

\subsection{Contribution from $n_0 > 1$} 
\label{sec:n0contribution}
Since $\beta_n$ are supported on $\gcd(n,P(W))=1$, we have $n_0 \geq W$ and $n_1,n_2 \ll N/W$. We may then use a divisor bound \eqref{eq:roughdivisorbound} to get a contribution
\begin{align*}
 \ll    \sum_{\substack{n_1,n_2 \leq 2 N/W \\ \gcd(n_1,n_2) = 1 \\ n_1n_2 > 1 }}  |\beta_{n_1} \beta_{n_2} |\sum_{\substack{x_1,x_2,v_1,y_2 \\  a (n_2 x_1^2-n_1 x_2^2) = b(n_1 v_2^3-n_2 v_1^3) \neq 0 }}   \sum_{\substack{n_0 | ax_1^2+by_1^3  \\ \gcd(n_0,P(W))=1}}
 f_1(\tfrac{x_1}{A})  f_1(\tfrac{x_2}{A}) \gamma_{v_1}^{(i_1)}\gamma_{v_2}^{(i_2)} \\
 \ll  W^{o(1)}  \sum_{\substack{n_1,n_2 \leq 2N/W \\ \gcd(n_1,n_2) = 1 \\ n_1n_2 > 1 }}  |\beta_{n_1} \beta_{n_2} |\sum_{\substack{x_1,y_1,x_2,y_2 \\  |a (n_2 x_1^2-n_1 x_2^2)| = |b(n_1 y_2^3-n_2 y_1^3)| \neq 0 }}   
 f_1(\tfrac{x_1}{A})  f_1(\tfrac{x_2}{A}) \gamma_{v_1}^{(i_1)}\gamma_{v_2}^{(i_2)}.
\end{align*}
By similar arguments as above (using once more Conjecture $\mathrm{C}_{a}(\eps)$), we see that this is bounded by
$ \ll W^{-1/2}  N X^{2/3}.$ This completes the proof of Proposition \ref{prop:typeii1} and thus the proof of Proposition \ref{prop:typeII}. \qed

Proving the upper bounds  in this and the previous section are non-trivial tasks and we needed to employ the Conjecture $\mathrm{C}_a(\eps)$. It is not clear if a correct-order upper bound can be proved unconditionally by more elementary considerations, and this on its own would be an interesting problem.

\section{Proofs of Theorems \ref{thm:technicalasymp} and \ref{thm:lower}} \label{sec:sieve}
\subsection{Proof of Theorem \ref{thm:technicalasymp}}
We let let $\mathcal{W}=(w_n \log n)$ and define $  \mathcal{W}^{(2)} := (w_{n}^{(2)})$ (of sieve dimension 2) by
\begin{align*}
 w_{n}^{(2)}  := w_n \log n \sum_{4X^{1/3} \leq m < 2X^{1/2}} \mu^2(m) \mathbf{1}_{m|n}.
\end{align*}
For any sequence $\mathcal{C}=(c_n)$ we denote
\begin{align*}
    S(\mathcal{C}_d,z) = \sum_{\gcd(n,P(z))=1} c_{dn}, \quad   S(\mathcal{C},z) =   S(\mathcal{C}_1,z).
\end{align*}
The following lemmas follow quickly from the fundamental lemma of the sieve \cite[Corollary 6.10]{odc}, using respectively Propositions \ref{prop:typeI} and \ref{prop:typeI2} to bound the remainder.
\begin{lemma} \label{le:funi1lemma}
Let $W=X^{1/(\log \log X)^2}$ and $d \leq X^{5/9-\eta}$ be square-free. Then
  \begin{align*}
      S(\mathcal{W}_d, W) \ll_{\eta,C} X^{5/6}(\log X)^{-C}.
  \end{align*}
\end{lemma}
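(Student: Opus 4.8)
The plan is to realise $\mathcal W$ as a difference of two non-negative sequences, apply the fundamental lemma of the sieve \cite[Corollary 6.10]{odc} to each, observe that the two sieve main terms cancel because $b_n$ was built as a random model for $a_n$, and bound the surviving remainders with Proposition \ref{prop:typeI}.

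Write $\mathcal W=\mathcal A'-\mathcal B'$ with $\mathcal A':=(a_n\log n)\geq 0$ and $\mathcal B':=(b_n\log n)\geq 0$. Both are of sieve dimension $1$ with density $g(m)=1/m$: indeed, by $N_{a,b}(p)=p$ (established in the proof of Proposition \ref{prop:typeI}) and the Chinese remainder theorem, for squarefree $m$ coprime to $d$ the counts $\sum_n a_{dmn}$ and $\sum_n b_{dmn}$ both have main term $\tfrac1{dm}AB\widehat{f_1}(0)\widehat{f_2}(0)$, the latter exactly up to $O(X^{-99})$ by Poisson summation on the smooth weight $b_n$; and since $b_n$ is normalised so that the masses of $\mathcal A$ and $\mathcal B$ agree, while $\log n$ is constant up to $1+O(\delta)$ over the support in play, the fundamental-lemma main terms of $S(\mathcal A'_d,W)$ and $S(\mathcal B'_d,W)$ coincide. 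I would then split on the size of $d$. For $d\leq X^{\eta^2}$, apply the fundamental lemma to the $d$-subsequences of $\mathcal A'$ and $\mathcal B'$ with level of distribution $D:=X^{5/9-\eta-\eta^2}$, so that $de\leq X^{5/9-\eta}$ for every $e\mid P(W)$ with $e<D$; since $W=X^{1/(\log\log X)^2}$ the sifting parameter $s:=\log D/\log W$ satisfies $s\gg(\log\log X)^2\to\infty$, so the common main-term error is $\ll d^{-1}X^{5/6}e^{-s/2}$, which is $\ll_C X^{5/6}(\log X)^{-C}$ for every fixed $C$.

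Subtracting, the main terms cancel and
\[
 S(\mathcal W_d,W)\ll\sum_{\substack{e<D\\ e\mid P(W)}}\bigl(|r_{de}(\mathcal A')|+|r_{de}(\mathcal B')|\bigr)+X^{5/6}(\log X)^{-C},
\]
with $r_{de}(\mathcal B')\ll X^{-99}$ by Poisson and $r_{de}(\mathcal A')=\sum_n w_{den}\log(den)+O(X^{-99})$. Peeling off the slowly varying factor $\log(den)=\log X+O(1)$ and invoking Proposition \ref{prop:typeI} with modulus $de\leq X^{5/9-\eta}$ gives $r_{de}(\mathcal A')\ll(de)^{-1}X^{5/6-\eta/2}$; summing over $e\mid P(W)$ loses only $\prod_{p<W}(1+1/p)\ll\log X$, so the remainder is $\ll X^{5/6-\eta/2}\log X$, which is admissible. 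For the complementary range $X^{\eta^2}<d\leq X^{5/9-\eta}$ the fundamental lemma degenerates, but the trivial bound already suffices: $|S(\mathcal W_d,W)|\leq(\log X)\bigl(\sum_n a_{dn}+\sum_n b_{dn}\bigr)\ll d^{-1}X^{5/6}\log X\ll X^{5/6-\eta^2+o(1)}$. Combining the two ranges proves the lemma. The one delicate point is calibrating $D$ against $W$: because Proposition \ref{prop:typeI} confines us to moduli $\leq X^{5/9-\eta}$, we cannot sift when $d$ is itself near that range, which forces the split at $d=X^{\eta^2}$; everything else is the routine machinery of the fundamental lemma, the essential input being the exact cancellation of main terms guaranteed by the construction of $b_n$.
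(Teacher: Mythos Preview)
Your approach—split $\mathcal W$ into the non-negative pieces $\mathcal A'=(a_n\log n)$ and $\mathcal B'=(b_n\log n)$, apply the fundamental lemma to each, and feed the remainders into Proposition~\ref{prop:typeI}—is exactly the paper's route (the paper says only that the lemma ``follows quickly from the fundamental lemma of the sieve \dots using Proposition~\ref{prop:typeI} to bound the remainder''). Your case split at $d=X^{\eta^2}$ and the trivial bound for large $d$ are sensible elaborations.

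There is, however, a genuine gap in the step where you bound $r_{de}(\mathcal A')=\sum_n w_{den}\log(den)$. Writing $\log(den)=\log X+O(1)$ and applying Proposition~\ref{prop:typeI} to the $\log X$ piece is fine, but the $O(1)$ piece contributes $O\bigl(\sum_n|w_{den}|\bigr)$, and this is \emph{not} $\ll(de)^{-1}X^{5/6-\eta/2}$ as you claim. The supports of $a_n$ (centered near $aA^2+bB^3$) and of $b_n$ (centered near $X$) need not overlap, so over their union $\log n$ genuinely varies by $O(\log\log X)$, and one only has the trivial estimate $\sum_n|w_{den}|\leq\sum_n a_{den}+\sum_n b_{den}\ll(de)^{-1}AB\widehat{f_1}(0)\widehat{f_2}(0)$, with no power saving in $X$. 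The same remark applies to your assertion that ``$\log n$ is constant up to $1+O(\delta)$ over the support in play'': this holds on $\operatorname{supp}a_n$ and on $\operatorname{supp}b_n$ separately, but not on their union.

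The rescue is that $\widehat{f_1}(0)\widehat{f_2}(0)\asymp\delta^2=(\log X)^{-2c}$, so the offending term is $\ll(de)^{-1}X^{5/6}(\log X)^{-2c}$; summing over $e\mid P(W)$ costs only $\log W$, giving $\ll d^{-1}X^{5/6}(\log X)^{-2c+1}$. Thus the final bound $\ll_{\eta,C}X^{5/6}(\log X)^{-C}$ does go through, but only once the constant $c$ in $\delta=(\log X)^{-c}$ is taken large relative to $C$—which is the paper's standing convention. So your overall scheme is correct; you just need to replace the claimed power-saving remainder bound by this polylogarithmic one.
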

\begin{lemma} \label{le:funi2lemma}
Let $W=X^{1/(\log \log X)^2}$ and let $\alpha_d$ be bounded and supported on square-free integers. Then
  \begin{align*}
    \sum_{d \leq X^{1/4-\eta}} \alpha_d  S(\mathcal{W}^{(2)}_d, W)\ll_{\eta,C} X^{5/6}(\log X)^{-C}.
  \end{align*}
\end{lemma}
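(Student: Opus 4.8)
The plan is to run the fundamental lemma of the sieve on the sequences $\mathcal{W}^{(2)}_e=(w^{(2)}_{en})$ and to feed the resulting remainder into Proposition \ref{prop:typeI2}. The sequence $\mathcal{W}^{(2)}$ has sieve dimension $2$: writing $\mathcal{A}^{(2)}_e,\mathcal{B}^{(2)}_e$ for the non-negative sequences obtained by replacing $w_{en}$ by $a_{en}$, resp.\ $b_{en}$, in $w^{(2)}_{en}$, a short computation shows that for $p\nmid e$ both have the same local density $g(p)=\tfrac{2}{p+1}\sim\tfrac2p$, the factor $2$ coming from the divisor $m$. (One may assume throughout that $\gcd(e,P(W))=1$, as is the case for the Type I$_2$ coefficients produced by Heath--Brown's identity; this ensures $p\nmid e$ for every prime $p<W$, so that the two model densities genuinely agree over the sieving range.) We delete only primes below $W=X^{1/(\log\log X)^{2}}=X^{o(1)}$, so we may take sieving level $D=X^{\eta^{2}}$; then $s:=\log D/\log W=\eta^{2}(\log\log X)^{2}\to\infty$, whence $e^{-s}\ll_{C}(\log X)^{-C}$ for every $C$. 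Applying Corollary 6.10 of \cite{odc} (upper-bound weights on $\mathcal{A}^{(2)}_e$ with lower-bound weights on $\mathcal{B}^{(2)}_e$, and conversely) yields
\begin{equation*}
 S(\mathcal{W}^{(2)}_e,W)=\mathrm{M}_e+O\!\Bigl(\sum_{\substack{\delta<D\\ \delta\mid P(W)}}\bigl(|r_{\delta}(\mathcal{W}^{(2)}_e)|+|r_{\delta}(\mathcal{B}^{(2)}_e)|\bigr)\Bigr),
\end{equation*}
where $r_{\delta}(\mathcal{C}_e):=\sum_{n\equiv 0\,(\delta)}c_{en}-g(\delta)\sum_{n}c_{en}$. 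Since the two model densities coincide, the sieve main term collapses to $\mathrm{M}_e\ll V(W)\bigl|\sum_{n}w^{(2)}_{en}\bigr|+e^{-s}\,e^{-1}X^{5/6+o(1)}$, using $V(W)\ll1$ and the crude bound $\sum_{n}(a^{(2)}_{en}+b^{(2)}_{en})\ll e^{-1}X^{5/6+o(1)}$.

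The $\mathcal{B}^{(2)}$ remainder is negligible because $b_{n}$ is smooth: Poisson summation (Lemma \ref{le:poisson}) gives $|r_{\delta}(\mathcal{B}^{(2)}_e)|\ll X^{-100}$. Using $|r_{\delta}(\mathcal{W}^{(2)}_e)|\le|\sum_{n\equiv 0\,(\delta)}w^{(2)}_{en}|+g(\delta)|\sum_{n}w^{(2)}_{en}|$ together with $\sum_{\delta<D,\,\delta\mid P(W)}g(\delta)\ll(\log X)^{O(1)}$, the whole problem reduces to bounding $\sum_{n\equiv 0\,(\delta)}w^{(2)}_{en}$ (with $\delta=1$ allowed) summed against $|\alpha_e|$ over $e\le X^{1/4-\eta}$ and over $\delta$. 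In such a sum, expand $w^{(2)}_{en}=w_{en}\log(en)\sum_{4X^{1/3}\le m<2X^{1/2}}\mu^{2}(m)\mathbf 1_{m\mid en}$ together with the congruence $n\equiv 0\,(\delta)$, and write $m=m_{1}m_{2}$ with $m_{1}=\gcd(m,e\delta)$ and $\gcd(m_{2},e\delta)=1$. The sum over $n$ then collapses to $\sum_{n'}w_{kn'}\log(kn')$ with $k:=e\delta m_{2}$; here $k$ is square-free (a product of the pairwise coprime square-free numbers $e,\delta,m_{2}$), is divisible by $e\delta\le X^{1/4-\eta+\eta^{2}}$, and has size $k\le e\delta\cdot m<X^{3/4}$. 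On the (narrow) support of $w_{\bullet}$ the weight $\log(kn')$ is constant up to $O(\delta)$ and may be replaced by $\log X$ at a cost $\ll\delta X^{5/6+o(1)}$. Dissecting $\mathbf 1_{4X^{1/3}\le m<2X^{1/2}}$ into $O((\log X)^{O(1)})$ smooth bumps $f(k/K)$ with $K\le X^{3/4}$, and grouping the boundedly many pairs $(e,\delta)$ with $e\delta=q$, each of the resulting pieces is $\ll X^{o(1)}$ times
\begin{equation*}
 \sum_{q\le X^{1/4-\eta/2}}\Bigl|\sum_{k\equiv 0\,(q)}\mu^{2}(k)\,f(\tfrac{k}{K})\sum_{n'}w_{kn'}\Bigr|,
\end{equation*}
which is exactly the quantity estimated in Proposition \ref{prop:typeI2}, namely $\ll X^{5/6-\eta/2+o(1)}$; summing over the $O((\log X)^{O(1)})$ pieces keeps the total below $X^{5/6}(\log X)^{-C}$, as desired.

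I expect the main obstacle to be purely organisational: fitting the sieve modulus $\delta$, the Type I$_2$ modulus $e$, and the ``small'' part $m_{1}=\gcd(m,e\delta)$ of the divisor $m$ together so that the glued modulus $q=e\delta m_{2}$ simultaneously (i) is square-free, (ii) is divisible by an outer modulus of admissible size $\le X^{1/4-\eta/2}$, and (iii) still satisfies $k\le X^{3/4}$, all while the narrow smooth dissection of $\mathbf 1_{4X^{1/3}\le m<2X^{1/2}}$ respects the hypotheses of Proposition \ref{prop:typeI2}. The one point requiring genuine care (rather than bookkeeping) is the verification that the fundamental-lemma main term $\mathrm{M}_e$ really does cancel, i.e.\ that $\mathcal{A}^{(2)}$ and $\mathcal{B}^{(2)}$ share the local density $g(p)=\tfrac2{p+1}$ over all $p<W$; this is where $\gcd(e,P(W))=1$ (equivalently $p\nmid e$ for $p<W$) is used, since for $p\mid e$ the two models disagree. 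The remaining ingredients --- $V(W)\ll1$, the super-polynomial decay $e^{-s}\ll_{C}(\log X)^{-C}$, and the smoothness of $b_{n}$ --- are routine.
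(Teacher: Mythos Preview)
Your approach coincides with the paper's, which simply asserts that the lemma follows from the fundamental lemma of the sieve \cite[Corollary 6.10]{odc} with Proposition~\ref{prop:typeI2} handling the remainder; you have supplied the organisational details that the paper omits. One small remark: since you already bound $|r_\delta(\mathcal{A}^{(2)}_e)|$ via $|r_\delta(\mathcal{W}^{(2)}_e)|+|r_\delta(\mathcal{B}^{(2)}_e)|$, the density $g$ need only be chosen so that $r_\delta(\mathcal{B}^{(2)}_e)$ is small by Poisson, and so the separate verification that $\mathcal{A}^{(2)}$ shares the density $g(p)=\tfrac{2}{p+1}$ (and hence the extra hypothesis $\gcd(e,P(W))=1$) is not strictly required.
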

The goal is to show that
\begin{align*}
  \sum_{p} w_p  \log p =  S(\mathcal{W},2X^{1/2}) \ll \eps AB \widehat{f_1}(0) \widehat{f_2}(0).
\end{align*}
Applying Buchstab's identity twice with $Z=X^{1/6-2\eps/3-\eta}$ we get
\begin{align*}
      S(\mathcal{W},2X^{1/2}) =&   S(\mathcal{W},Z) - \sum_{Z \leq p < 2X^{1/2}}S(\mathcal{W}_p,p) \\
     = &S(\mathcal{W},Z)  - \sum_{Z \leq p < 4X^{1/3}}S(\mathcal{W}_p,Z) - \sum_{4X^{1/3} \leq p < 2X^{1/2}}S(\mathcal{W}_p,p) \\
     & \hspace{100pt}+  \sum_{Z \leq p_2 \leq p_{1} < 4X^{1/3}}S(\mathcal{W}_{p_1p_2},p_2)  \\
     =:& S_1(W) - S_2(W) - S_3(W) + S_4(W).
\end{align*}
By using Propositions \ref{prop:typeI} and \ref{prop:typeII} the first sum satisfies (similar to \cite[Theorem 3.1]{harman} or \cite[Proposition 25]{merikoskipolyprimes}, for instance)
\begin{align*}
    S_1(\mathcal{W}) \ll_C X^{5/6}(\log X)^{-C}.
\end{align*}
To see this, we have for $W=X^{1/(\log \log X)^2}$
\begin{align*}
     S(\mathcal{W},Z) =  \sum_{d|\frac{P(Z)}{P(W)}} \mu(d)  S(\mathcal{W}_d,W).
\end{align*}
For $d< X^{1/6+\eta^2}$ we can use  Lemma \ref{le:funi1lemma}. For $d\geq X^{1/6+\eta^2}$ with $d|P(Z)$  there is some factor $d_1|d$ with $d_1 \in [X^{1/6+\eta^2},X^{1/3-2\eps/3-\eta^2}]$, so that we can use Proposition \ref{prop:typeII} after relaxing cross-conditions. Similarly, the second sum satisfies 
\begin{align*}
    S_2(\mathcal{W})  =  \sum_{Z \leq p < 4X^{1/3}} \sum_{d|\frac{P(Z)}{P(W)}} \mu(d)  S(\mathcal{W}_{d p},W) \ll_C X^{5/6}(\log X)^{-C}
\end{align*}
by using Lemma \ref{le:funi1lemma} or Proposition \ref{prop:typeII} according to whether $d< X^{1/6+\eta^2}$  or $d\geq X^{1/6+\eta^2}$ .

For the fourth sum $S_4(\mathcal{W})$ we get a contribution $\ll_C X^{5/6} (\log X)^{-C}$ by Proposition \ref{prop:typeII} except for the part where  $ p_1 < X^{1/6+\eta^2}$ or $p_1 > X^{1/3-2\eps/3-\eta^2}$. The bad ranges  contribute $\ll \eps AB \widehat{f_1}(0) \widehat{f_2}(0)$ by a simple upper bound sieve (eg. \cite[Theorem 7.1]{odc}), using Proposition \ref{prop:typeI} to bound the remainder. 

It then remains to handle the third term which counts products of two primes, that is,
\begin{align*}
 -S_3(\mathcal{W})  =   -\sum_{4X^{1/3} \leq p < 2X^{1/2}}S(\mathcal{W}_p,p) =  -S(\mathcal{W}^{(2)},X^{1/3})
\end{align*}
By Buchstab's identity
\begin{align*}
     -S_3(\mathcal{W}) =  -S(\mathcal{W}^{(2)},Z) + \sum_{Z \leq p < X^{1/3}} S(\mathcal{W}^{(2)}_p,p) = -S_5(\mathcal{W}) + S_{6}(\mathcal{W})
\end{align*}
We have $S_5(\mathcal{W}) \ll_C  X^{5/6}(\log X)^{-C}$ by a similar argument as for $S_1(\mathcal{W})$, just using Proposition \ref{prop:typeI2} in place of \ref{prop:typeI}.

Finally, for $S_{6}(\mathcal{W})$ we get  $\ll_C X^{5/6} (\log X)^{-C}$ by Proposition \ref{prop:typeII} except for the part where $p_1 < X^{1/6+\eta^2}$ or $p_1 > X^{1/3-2\eps/3-\eta^2}$. The bad ranges contribute $\ll \eps AB \widehat{f_1}(0) \widehat{f_2}(0)$, by using $|w_{n}^{(2)}| \leq 6 (a_n + b_n) \log n$ for $(n,P(Z))$=1 and applying a simple upper bound sieve (eg. \cite[Theorem 7.1]{odc}), using Proposition \ref{prop:typeI} to bound the remainder. This completes the proof of Theorem \ref{thm:technicalasymp}. \qed

\subsection{Proof of Theorem \ref{thm:lower}}.
The bad ranges come from the terms $S_4(\mathcal{W}),S_6(\mathcal{W})$ which have a positive sign. Thus, taking $\eta =o(1)$ in the above argument, we get the  Harman's sieve lower bound
    \begin{align*}
    \sum_{n} \Lambda(n) a_n \geq (1-\mathfrak{D}_4(\eps) - \mathfrak{D}_6(\eps)+o(1)) \sum_{n} \Lambda(n) b_n
    \end{align*}
    where, denoting $I(\eps):=[1/6,1/3-2\eps/3]$ and the Buchstab function by $\omega(u)$ (similar to \cite[Section 7]{merikoskipolyprimes}, for instance),
    \begin{align*}
        \mathfrak{D}_4(\eps) &= \int_{\substack{1/6-2\eps/3 < \alpha_2 < \alpha_1 < 1/3 \\ \alpha_1,\alpha_2, \alpha_1+\alpha_2 \not \in I(\eps)}} \frac{\omega(\frac{1-\alpha_1-\alpha_2}{\alpha_2}) \d \alpha_1 \d \alpha_2}{\alpha_1 \alpha_2^2} \\
          \mathfrak{D}_6(\eps) &= \int_{\substack{1/6-2\eps/3 < \alpha_2  < 1/3 \\ \alpha_2 \not \in I(\eps)}}  \int_{\substack{1/3 < \alpha_1 < 1/2 }} \frac{\omega(\frac{1-\alpha_1-\alpha_2}{\alpha_2}) \omega(\frac{\alpha_1}{\alpha_2}) \d \alpha_1 \d \alpha_2}{\alpha_2^3}.
    \end{align*}
    For $\eps = 1/17$ a numerical computation $\mathfrak{D}_4(\eps) + \mathfrak{D}_6(\eps) < 0.22+0.73= 0.95$, which shows that 
\begin{align*}
 \sum_{n} \Lambda(n) a_n \geq (0.05 + o(1)) \sum_{n} \Lambda(n) b_n. 
\end{align*}
The second part of Theorem \ref{thm:lower} follows immediately from 
Proposition \ref{prop:typeII}, by restricting the $k$-fold convolution to having a factor in $[X^{1/6+\eta},X^{1/6+2\eta}]$ for some $\eta >0$ small. \qed

The value $\eps = 1/17$ in Theorem \ref{thm:lower} can of course be greatly improved by a more careful argument and by further iterations of Buchstab's identity.

\section{Proof of Theorem\ref{thm:mobius}}
By a finer-than-dyadic decomposition it suffices to show that for  $A \in (\delta X^{1/2}, X^{1/2}]$ and $B\in (\delta X^{1/3}, X^{1/3}]$  and for  any $\nu > 0$ we have 
    \begin{align} \label{eq:mobiusclaim}
       S =   \sum_{n} \mu(n) a_n & \ll \nu AB \widehat{f_1}(0)\widehat{f_2}(0).
    \end{align}
We decompose $n$ into the smooth and rough parts
\begin{align*}
 S=\sum_{P^+(n_s) < X^{\nu^2}}   \sum_{P^-(n_r) \geq X^{\nu^2}}  \mu(n_s) \mu(n_r) a_{n_s n_r} = S_{\leq}  + S_0 + S_{>},
\end{align*}
where $S_{\leq}$ has $n_s \leq X^{\nu^3}$ and $S_>$ has $n_s > X^{\nu}$, and $S_0$ restricts to $X^{\nu^3} < n_s \leq X^{\nu}$.

Using a sieve upper bound with Proposition \ref{prop:typeI} to handle the remainder, we have
\begin{align*}
   S_{\leq} = \sum_{n_s \leq X^{\nu^3}}   \sum_{P^-(n_r) \geq X^{\nu^2}}  \mu(n_s) \mu(n_r) a_{n_s n_r} \ll  \frac{ AB \widehat{f_1}(0)\widehat{f_2}(0)}{\nu^2 \log X} \sum_{n_s \leq X^{\nu^3}}  \frac{1}{n_s} \ll \nu  AB \widehat{f_1}(0)\widehat{f_2}(0).
\end{align*}

We then consider $S_{>}$.   By a greedy algorithm $n_s=n_0 n_1$ with $n_0 \in [X^{\nu},X^{2\nu}]$. Therefore, gluing the variables $m=n_1 n_r$, by Proposition \ref{prop:typeI} and  \cite[Lemma 9]{merikoskipolyprimes} we have
\begin{align*}
    S_\nu \leq  \sum_{\substack{P^+(n_0) < X^{\nu^2} \\  n_0 \in  [X^{\nu},X^{2\nu}]}} \mu^2(n_0) \sum_{m} a_{n_0 m} \ll   AB \widehat{f_1}(0)\widehat{f_2}(0) \sum_{\substack{P^+(n_0) < X^{\nu^2} \\  n_0 \in  [X^{\nu},X^{2\nu}]}} \frac{1}{n_0} \ll \nu AB\widehat{f_1}(0)\widehat{f_2}(0).
\end{align*}

We then consider
\begin{align*}
    S_0 =\sum_{\substack{P^+(n_s) < X^{\nu^2} \\ X^{\nu^3} < n_s \leq X^{\nu} }}   \sum_{\gcd(n_r,P(X^{\nu^2})=1}  \mu(n_s) \mu(n_r) a_{n_s n_r}.
\end{align*}
We have the Heath-Brown identity for any $n \leq X$ (see, for instance, \cite[Section 4.1]{mobiusshort})
\begin{align*}
    \mu = \sum_{1 \leq j \leq 3} (-1)^{j-1}\binom{3}{j} (\mu \mathbf{1}_{[0,X^{1/3}]})^{(\ast)j} \ast 1^{(\ast)(j-1)}.
\end{align*}
We apply this to  $\mu(n_r)$ to get
\begin{align*}
    S_0 =  \sum_{1 \leq j \leq 3} (-1)^{j-1}\binom{3}{j} S_j
\end{align*}
The sum $S_1$ is empty since $n_r > X^{1/2}$, say. We claim that
\begin{align} \label{eq:s2claim}
    S_2 \ll  (\eps \nu^{-2}  2^{1/\nu^2} + \nu)  AB\widehat{f_1}(0)\widehat{f_2}(0)  \\ \label{eq:s3claim}
    S_3 \ll (\eps \nu^{-2}  2^{4/\nu^2} + \nu)  AB\widehat{f_1}(0)\widehat{f_2}(0)  
\end{align}
Assuming that these bounds hold, we can complete the proof \eqref{eq:mobiusclaim} and thus of Theorem \ref{thm:mobius}, by choosing $\eps=\nu^3 2^{-4/\nu^2}$.
\subsection{Bounding $S_2$}
We have
\begin{align*}
    S_2 =  \sum_{\substack{P^+(n_s) < X^{\nu^2} \\ X^{\nu^3} < n_s \leq X^{\nu} }}   \sum_{\substack{\gcd(k_1 k_2 m,P(X^{\nu^2})=1  \\ k_1,k_2 \leq X^{1/3}}}  \mu(n_s)\mu(k_1) \mu(k_2) a_{n_s k_1 k_2 m}.
\end{align*}
The part where $k_1 \in [X^{1/3-\eps},X^{1/3}]$ (resp. $k_2 \in  [X^{1/3-\eps},X^{1/3}]$) contribute by glueing together the variables $n=n_s k_2 m$ and by Proposition \ref{prop:typeI}
\begin{align} \label{eq:s2gluebound}
    \ll 2^{1/\nu^2}  \sum_{\substack{\gcd(k_1,P(X^{\nu^2})=1  \\ k_1  \in [X^{1/3-\eps},X^{1/3}]}} \sum_{n} a_{k_1 n} \ll \eps \nu^{-2}  2^{1/\nu^2}  AB\widehat{f_1}(0)\widehat{f_2}(0).
\end{align}
For $k_1 \in [X^{1/6+\eps},X^{1/3-\eps}]$ or $k_2 \in [X^{1/6+\eps},X^{1/3-\eps}]$ we get by Proposition \ref{prop:typeII} a contribution $\ll_C (\log X)^{-C} AB\widehat{f_1}(0)\widehat{f_2}(0).$ For $k_1,k_2 \leq X^{1/6+\eps}$ we get by the fundamental lemma of the sieve \cite[Corollary 6.10]{odc} to $\gcd(m,P(X^{\nu^2})=1$, using  Propositions \ref{prop:typeI} to bound the remainder, and capturing oscillations in $\mu(n_s)$ by the Prime number theorem,
\begin{align*}
   \ll_C    \frac{ AB\widehat{f_1}(0)\widehat{f_2}(0)}{(\log X)^C} + \nu^{-3} e^{-\frac{1}{10 \nu^2}} AB\widehat{f_1}(0)\widehat{f_2}(0) \ll \nu  AB\widehat{f_1}(0)\widehat{f_2}(0).
\end{align*}
\subsection{Bounding $S_3$}
We have
\begin{align*}
    S_3 =  \sum_{\substack{P^+(n_s) < X^{\nu^2} \\ X^{\nu^3} < n_s \leq X^{\nu} }}   \sum_{\substack{\gcd(k_1 k_2 k_3 m_1 m_2,P(X^{\nu^2})=1  \\ k_1,k_2,k_3 \leq X^{1/3}}}  \mu(n_s)\mu(k_1) \mu(k_2) \mu(k_3) a_{n_s k_1 k_2 k_3 m_1 m_2}.
\end{align*}
Similar to \eqref{eq:s2gluebound}, for the parts where $k_j \in [X^{1/3-\eps},X^{1/3}]$ we glue together the rest of the variables and use Proposition \ref{prop:typeI} to get the bound
\begin{align*}
     \ll  2^{4/\eta^2} \sum_{\substack{\gcd(k_1,P(X^{\nu^2})=1  \\ k_1  \in [X^{1/3-\eps},X^{1/3}]}} \sum_{n} a_{k_1 n} \ll \eps \nu^{-2}  2^{4/\nu^2}  AB\widehat{f_1}(0)\widehat{f_2}(0).
\end{align*}
For $k_j \in [X^{1/6+\eps},X^{1/3-\eps}]$ we may use Proposition \ref{sec:typeii}. For $k_{1},k_2,k_3 \leq X^{1/6+\eps}$ we have $k_{i} k_{j} \leq X^{1/3+2\eps}$ for any $i\neq j$. The parts where $k_i k_j \in [X^{1/3-\eps},X^{1/3+2\eps}]$ contribute
\begin{align*}
\ll  2^{3/\eta^2} \sum_{\substack{\gcd(k_1k_2,P(X^{\nu^2})=1  \\ k_1k_2  \in [X^{1/3-\eps},X^{1/3+2\eps}]}} \sum_{n} a_{k_1 n}  \ll  \eps \nu^{-4}  2^{3/\nu^2}  AB\widehat{f_1}(0)\widehat{f_2}(0).
\end{align*}
For $k_i k_j \in [X^{1/6+\eps},X^{1/3-\eps}]$ we may use Proposition \ref{sec:typeii}. 

In the remaining parts we have $k_1k_2k_3 \leq X^{1/3+3\eps}$. The parts where $k_i k_j \in [X^{1/3-\eps},X^{1/3+2\eps}]$ contribute
\begin{align*}
\ll  2^{2/\eta^2} \sum_{\substack{\gcd(k_1k_2k_2,P(X^{\nu^2})=1  \\ k_1k_2 k_3  \in [X^{1/3-\eps},X^{1/3+3\eps}]}} \sum_{n} a_{k_1 n}  \ll  \eps \nu^{-6}  2^{2/\nu^2}  AB\widehat{f_1}(0)\widehat{f_2}(0).
\end{align*}
For $k_1 k_2 k_3 \in [X^{1/6+\eps},X^{1/3-\eps}]$ we may use Proposition \ref{sec:typeii}.

It then remains to handle
\begin{align*}
 S_4=   \sum_{\substack{P^+(n_s) < X^{\nu^2} \\ X^{\nu^3} < n_s \leq X^{\nu} }}   \sum_{\substack{\gcd(k_1 k_2 k_3 m_1 m_2,P(X^{\nu^2})=1  \\ k_1k_2k_3 \leq X^{1/6+\eps}}}  \mu(n_s)\mu(k_1) \mu(k_2) \mu(k_3) a_{n_s k_1 k_2 k_3 m_1 m_2}.
\end{align*}
By the fundamental lemma of the sieve \cite[Corollary 6.10]{odc} with $\kappa=2$ to $\gcd(m_1 m_2,P(X^{\nu^2})=1$, using  Propositions \ref{prop:typeI2} and \ref{prop:typeI}  to bound the remainder, and capturing oscillations in $\mu(n_s)$ by the Prime number theorem, we get
\begin{align*}
 S_4  \ll_C    \frac{ AB\widehat{f_1}(0)\widehat{f_2}(0)}{(\log X)^C} + \nu^{-5} e^{-\frac{1}{100 \nu^2}} AB\widehat{f_1}(0)\widehat{f_2}(0) \ll \nu  AB\widehat{f_1}(0)\widehat{f_2}(0).
\end{align*} \qed

 \bibliography{quadcubic}
\bibliographystyle{abbrv}
\end{document}